\documentclass[pdflatex,sn-mathphys]{sn-jnl}

\usepackage[T1]{fontenc}
\usepackage[utf8]{inputenc}
\usepackage[english]{babel}

\usepackage{graphicx}
\usepackage[protrusion=true,expansion=true]{microtype}

\usepackage{amsfonts,amsmath,amsthm,amssymb}
\usepackage{mathtools}
\usepackage{empheq}
\usepackage{bbm}
\usepackage{mathrsfs}

\usepackage{xcolor}
\usepackage{verbatim}
\usepackage{placeins}
\usepackage{paralist}

\usepackage{algorithm}
\usepackage{algpseudocode}

\usepackage{caption}
\usepackage{subcaption}
\usepackage{appendix}
\usepackage{etoolbox}

\usepackage{hyperref}

\newtheorem{thm}{Theorem}
\newtheorem{lem}[thm]{Lemma}
\newtheorem{cor}[thm]{Corollary}
\newtheorem{prop}[thm]{Proposition}

\theoremstyle{definition}

\newcommand{\bsym}[1]{\boldsymbol{#1}}
\newcommand{\norm}[1]{\lVert#1\rVert}
\newcommand{\dual}[2]{\langle#1,#2\rangle}

\newcommand{\bu}{\bsym{u}}
\newcommand{\bw}{\bsym{w}}

\newcommand{\bv}{\bsym{v}}

\newcommand{\id}{\text{id}}

\newcommand{\Df}{\mathcal{D}(M)}
\newcommand{\Prob}{\mathcal{P}(M)}

\definecolor{Bleu}{rgb}{0.3,0.2,1.0}

\usepackage[T1]{fontenc}
\usepackage{lmodern}
\usepackage{mathtools}
\usepackage{cancel}
\makeatletter
\usepackage{titlesec}
\titleformat{\section}{\fontsize{12}{14}\bfseries}{\thesection}{1em}{}
\titleformat{\subsection}{\fontsize{10}{12}\bfseries}{\thesubsection}{1em}{}
\makeatother

\usepackage{microtype}
\usepackage{hyperref}
\hypersetup{
  colorlinks=true,
  citecolor=green!50!black,   % reference/citation links — light green
  linkcolor= blue!80!white,     % equation/internal links — light red
}
\usepackage{cite}
\usepackage{tikz}
\usetikzlibrary{arrows.meta, calc}

\theoremstyle{definition}

\numberwithin{equation}{section}

\begin{document}

\title[Thermodynamically Constrained IGR] {Thermodynamically Constrained Information Geometric Regularization for Compressible Flows}

\author*[1]{\fnm{Seth} \sur{Taylor}}\email{seth.taylor@usask.ca}
\author[1]{\fnm{Raymond J.} \sur{Spiteri}}
\author[2]{\fnm{St{\'e}phane} \sur{Gaudreault}}

\affil[1]{\orgdiv{Department of Computer Science}, \orgname{University of
    Saskatchewan}, \orgaddress{\city{Saskatoon}, \country{Canada}}}
\affil[2]{\orgdiv{Recherche en prévision numérique atmosphérique}, \orgname{Environnement et Changement climatique Canada}, \orgaddress{\city{Dorval}, \country{Canada}}}

\abstract{We construct and analyze a thermodynamic extension of the recently proposed information geometric regularization of Cao and Sch{\"a}fer. The construction extends their shock-mitigating Hessian metric geometry using the Shannon entropy to constrain the regularized motion based on a thermodynamic length. Reformulating the equations in terms of mass and specific entropy explicitly connects the thermodynamic state to a position in the diffeomorphism group, allowing for a derivation of the regularized equations using an information geometric mechanics formalism based on geodesics on a Hessian manifold with a dual affine connection. The dynamics are defined using a pullback geometry for the Levi--Civita connection, describing constrained geodesic motion, and the cubic Amari--Chentsov tensor describing the information geometric correction. This new compressible fluid model introduces an anisotropic stress tensor to the momentum equation that vanishes along isentropic directions and an additional elliptic equation coupled to the barotropic regularization. Numerical simulations in one and two spatial dimensions demonstrate that the geometrically consistent incorporation of a thermodynamic constraint mitigates cusp singularities previously observed in other approaches while still maintaining the benefits of an inviscid regularization.}

\maketitle
\setcounter{tocdepth}{2}
\vspace{-0.5cm}
\tableofcontents

\section{Introduction}

\noindent
\emph{Information Geometric Regularization for Compressible Flows}
\vspace{0.1cm}

The numerical simulation of fluid dynamic models such as the Euler and Navier--Stokes equations that involve thermodynamic processes is central to the modelling, prediction, and control of natural phenomena \cite{anderson1995computational, pirozzoli2011numerical, bauer2015quiet}. Despite their broad applicability, the full system of equations remains challenging to solve accurately and efficiently. Much of this difficulty stems from dynamical interactions across spatial and temporal scales, which generate multiscale solution structures that are expensive to resolve numerically. Turbulence and shock formation are two of the most prominent consequences. While turbulence remains a central challenge in both pure and applied mathematics, shock formation is a more direct structural property of the equations themselves. Although shocks may serve as an effective model for sharp moving fronts, the formation of a genuine singularity in the macroscopic description of the dynamics of a fluid can be seen as a pathology of the model rather than as a physical feature. \par

Numerical methods designed to handle and capture shocks in compressible fluids have enabled applications such as aerodynamics, inertial confinement fusion, and numerical relativity \cite{pirozzoli2011numerical, lehner2014numerical}. A precursor of many of these schemes was the work of Godunov \cite{godunov1959finite}, which demonstrated that discontinuities could be incorporated into a finite volume method through the solution of local Riemann problems. This approach was followed by foundational work on approximate Riemann solvers \cite{roe1981approximate, harten1983upstream, toro2009hll, toro2013riemann}, higher-order accurate methods via reconstruction techniques such as monotonicity upstream methods \cite{van1979towards}, essentially non-oscillatory \cite{harten1997uniformly}, and weighted essentially non-oscillatory methods \cite{liu1994weighted, jiang1996efficient}, and nonlinear reconstruction schemes avoiding the use of Riemann solvers \cite{nessyahu1990non, kurganov2000new}. These methods all share a common starting point: they solve the original system, which admits singularities, and regularize the problem numerically. An alternative standpoint is to modify the model into a new system of equations that maintains the physics of sharp moving fronts without a mathematical singularity formation. Viscous regularizations \cite{vonneumann1950method, persson2006sub, boyd2001chebyshev, bhagatwala2009modified}, which systematically introduce dissipation to create a smoothing effect on the shock formation, are the most common approach along this line. Regularization via dissipation, however, fundamentally alters the equations of motion and can compromise their turbulent multi-scale dynamics by removing energy from the system at small length scales \cite{wilkins1980use, frisch2008hyperviscosity, cook2005hyperviscosity}. \par

The recently proposed information geometric regularization (IGR) of Cao and Sch{\"a}fer \cite{cao2023information} offers a novel method for \emph{inviscid regularization} by leveraging a geometric relation between shock formation and a boundary geometry on the space of diffeomorphisms of the fluid domain \cite{khesin2007shock}. The loss in regularity exhibited by the barotropic Euler equations can be associated with the degeneration of the Jacobian determinant of the Lagrangian flow map. In a beautiful application of geometry to scientific computing, the IGR methods leveraged an interior-point formulation to regularize the equations of motion based on a convex barrier functional that extends the vanishing Jacobian determinant boundary to infinity. Convex functionals of this form and their resulting Hessian metrics are common tools in information geometry \cite{amari2016information} on the space of probability densities. The methods introduced by \cite{cao2023information} provide a first-principles hydrodynamical approach to mitigating shock formation by introducing an alternative mechanical structure on the space that differs from the Lagrangian or Hamiltonian structures commonly used to define alternative flow models. It has been demonstrated that, in one spatial dimension, these equations are globally well-posed \cite{cao2024information}. \\ 

\noindent
\emph{Thermodynamic Extensions}
\vspace{0.1cm}

The IGR methods were originally derived in a barotropic setting, and first steps have been taken in the work of \cite{wilfong2025simulating} to incorporate thermodynamic processes where the regularized equations of motion took the form 
\begin{equation}
\label{full_compressible_euler_regularized}
\begin{aligned}
\partial_t \rho \bu + \nabla \cdot(\rho \bu \otimes \bu + (p(E,\rho) + \Sigma) I) & = 0 \,,
\\
\partial_t \rho + \nabla \cdot (\rho \bu) &= 0 \,,
\\
\partial_t E + \nabla \cdot ((E + p + \Sigma)\bu) & = 0 \,,
\\
\rho^{-1} \Sigma - \alpha \nabla \cdot (\rho^{-1} \nabla \Sigma) &= \alpha \left((\mathrm{div}(\bu))^2 + \mathrm{tr}(\nabla \bu^2)\right) \,,
\end{aligned}
\end{equation}
where $\bu$ is the fluid velocity, $\rho$ the mass density, $p$ is the pressure, $E$ is the total energy density, and $\Sigma$ is an `entropic pressure' potential associated to the IGR regularization. This regularization approach was an enabling factor to circumvent the computational overhead of shock capturing for the simulation of multi-engine spacecraft at unprecedented scales \cite{wilfong2025simulating}. However, the regularized system \eqref{full_compressible_euler_regularized} was later shown to exhibit a thermodynamic inconsistency, manifested by a spurious spike in internal energy at wave collisions \cite{barham2025hamiltonian}. Barham et al. attributed this inconsistency to a geometric mismatch between the barotropic regularization and the full compressible Euler system and introduced several Hamiltonian regularizations of the equations of motions in response. Although these new sets of equations exhibited a conservation of energy, there still remained a spurious spike in energy density at wave-front collisions and they were recognized to introduce additional complications, such as higher-order derivatives in the state variables, that would hinder their numerical use. Their work highlighted that the information geometric mechanics fundamentally differs from the variational structure of Hamiltonian mechanics, and the resolution of this thermodynamic inconsistency within this framework was left as an open problem \cite{barham2025hamiltonian}. \\

\noindent
\emph{Thermodynamically Constrained Information Geometric Regularization}
\vspace{0.1cm}

The aim of this work is to develop an extension of the IGR methods that incorporates a thermodynamic regularization directly into the Hessian metric geometry defining the dynamics. The starting point is to rewrite the thermodynamic balance relations in terms of the entropy density $\pi = \rho s$, where $s$ is the specific entropy, which satisfies a conservation law of the form
\begin{equation}
\label{pi_continuity}
\partial_t \pi  + \nabla \cdot (\pi \bu) = 0 \,.
\end{equation}
As a transported quantity, the entropy density evolves via pushforward $\pi = \varphi_* \pi_0$ with the Lagrangian flow map $\varphi(t):M \to M$ defined by $\dot\varphi = \bu \circ \varphi$. This perspective offers connection between the full thermodynamic state and a position within the diffeomorphism group. We extend the approach taken by Cao and Sch{\"a}fer by defining the equations of motion using a convex barrier function in an ambient space in which the diffeomorphism group is embedded. In this setting, we consider an embedding 
\begin{equation*}
\Df \hookrightarrow \Df \times \Prob \times \mathcal{V}(M) \,,
\end{equation*}
which encodes the fluid deformation in the diffeomorphism group $\Df$ of the fluid domain $M$, the mass density in the space of probabilities $\Prob$, and the entropy density in the space of positive densities (volume forms) $\mathcal{V}(M)$. A similar submanifold geometry was used by Cao and Schafer \cite{cao2023information} to derive the barotropic information geometric regularization with the first two components. We extend their convex barrier by constraining the motion to take a path that minimizes a \emph{thermodynamic length} associated to the Kullback--Leibler divergence
\begin{equation}
\label{KL_divergence}
\mathscr{D}_{KL}(\pi \mid\mid\bar{\pi}) = \int \pi(x) \log(\pi(x)/\bar{\pi}(x))dx\,,
\end{equation}
where $\bar{\pi}$ is a reference entropy density. The equations of motion are then derived using information geometric mechanics, based on the dual affine connection of the pullback Hessian metric defined by the barrier function and the embedding. The resulting thermodynamically constrained information geometric regularization of the Euler (TIGRE) equations take the form
\begin{equation}
\label{eq:TIGRE}
\begin{aligned}
\partial_t \rho \bu + \nabla \cdot \left(\rho \bu \otimes \bu  + (p(\rho,s) + \Sigma)I\right) &= -\pi \nabla \chi\,,
\\
\partial_t \rho + \nabla \cdot (\rho \bu) &= 0 \,,
\\
\partial_t \pi + \nabla \cdot (\pi \bu) &= 0 \,,
\\
\rho^{-1}\Sigma - \alpha \nabla \cdot \left(\rho^{-1}(\nabla \Sigma + \pi \nabla \chi)\right) &= \alpha \left(\mathrm{div}(\bu)^2 + \mathrm{tr}(\nabla \bu^2)\right)\,,
\\
\chi - \beta \pi^{-1} \nabla \cdot\left(\pi \rho^{-1}(\nabla \Sigma + \pi \nabla \chi)\right) &=  \beta \big(\mathrm{div}_\pi(\bu)^2 - \mathrm{tr}(\nabla \bu^2) + \nabla^2\log\pi[\bu, \bu]\big)\,,
\end{aligned}
\end{equation}
where $\mathrm{div}_\pi(\bu) = \pi^{-1}\mathrm{div}(\pi \bu)$ is an entropy weighted divergence and the additional entropic pressure potential $\chi$ arises from the extended barrier function, with the constants $\alpha, \beta > 0$ controlling the strength of the regularization. The TIGRE system reduces to the barotropic IGR regularization as $\beta \to 0$ and possesses many of the same desirable properties of the inviscid regularization. The additional potential responds to entropy weighted deformations of the flow by concentrating or expanding to generate a curvature-driven restoring force. The regularization can be naturally applied to the Navier--Stokes equations by including viscosity and forcing terms; however, its geometric structure is most transparently presented in the inviscid setting. We utilize an information geometric formalism to provide a rigorous derivation of the system \eqref{eq:TIGRE} and further validate the use of this constraint numerically by demonstrating the flow mitigates the cusp-singularities discussed in \cite{barham2025hamiltonian}.

\noindent
\emph{Outline} 
\vspace{0.1cm}

In Section~\ref{sec:geometric_formulation}, we provide a formalism for the information geometric mechanics used to define the TIGRE equations \eqref{eq:TIGRE} and demonstrate that it reproduces the IGR equations \cite{cao2023information}. In Section~\ref{sec:entropy_barrier}, we then apply the formalism to our proposed thermodynamic extension to derive the equations of motion \eqref{eq:TIGRE} and compare their thermodynamic, conservative, and dynamic properties to the IGR system \eqref{full_compressible_euler_regularized}. In Section~\ref{sec:numerical_experiments}, we perform numerical experiments that validate our theoretical considerations in comparison to the barotropic IGR regularization method. We conclude with some proposed future directions of research into information geometric mechanics at the intersection of geometric mechanics, information theory, and scientific computing.  

\section{Geometric Formalism}
\label{sec:geometric_formulation}

We begin by recalling the Riemannian geometric formalism of Khesin, Misio{\l}ek, and Modin \cite{khesin2021geometric} for the description of the full compressible Euler equations as a form of Newton's equations on the diffeomorphism group. We then discuss the dual affine connections and the pullback geometry needed to define the information geometric regularization. This information geometric mechanics is then applied to derive the IGR regularization \cite{cao2023information}, which offers an alternative derivation that we describe in detail because the same approach is applied analogously to the derivation of the TIGRE system \eqref{eq:TIGRE} in Section~\ref{sec:entropy_barrier}.  

\subsection{Newton's Equations on the Diffeomorphism Group}

Let $M \subset \mathbb{R}^d$ denote an embedded submanifold representing the fluid domain, and denote $\mu$ as the restriction of the ambient volume form to $M$. The compressible Euler equations can be viewed as Newtonian dynamics on the diffeomorphism group $\Df$ equipped with a Riemannian metric \cite{khesin2021geometric}. The tangent spaces are identified with the space of vector fields on $M$, denoted $\mathfrak{X}(M)$, in the sense that
\begin{equation}
  T_\varphi \Df = \left\{\bv: M \to TM \,:\, \pi_{TM} \circ \bv = \varphi\right\} = \mathfrak{X}(M) \circ \varphi \,,
\end{equation}
where $\pi_{TM}:TM \to M$ is the tangent bundle projection. When $\partial M \neq \emptyset$, we restrict to vector fields that are tangent to the boundary satisfying $v \cdot n\vert_{\partial M}= 0$. The fluid configuration is described by the Lagrangian flow map $\varphi(t) \in \Df$ and the Eulerian fluid velocity $u(t) \in \mathfrak{X}(M)$ is defined by $\dot\varphi = u \circ \varphi$. The fluid mass density is represented by a positive measure $\varrho = \rho\mu$ associated with an element of the space of probability densities
\begin{equation}
\label{prob_densities}
\Prob = \left\{\rho \in C^{\infty}(M) \,:\, \rho > 0 \,,\, \int_M \rho \mu = \int_M \mu \right\} \,.
\end{equation}
The diffeomorphism group acts on \eqref{prob_densities} by the right action of pullback
\begin{equation}
\label{pullback_action}
\varphi^*\varrho = \rho \circ \varphi J_{\mu}(\varphi) \mu \,,
\end{equation}
where $J_\mu(\varphi) \in C^{\infty}(M)$ is the Jacobian determinant of the transformation, and on the left by the pushforward action 
\begin{equation}
\label{pushforward_action}
\varphi_* \varrho \coloneqq (\varphi^{-1})^* \varrho \,.
\end{equation}
The metric relevant to the compressible Euler equations is the $L^2$ metric, expressed in the Lagrangian and Eulerian frames as  
\begin{equation}
\label{L2_metric}
G_\varphi(\dot{\varphi}, \dot\varphi) = \int_M |\dot\varphi|^2 \mu = \int_M \rho |\bu|^2 \mu \,.
\end{equation}
Potential energies on $\Df$ are defined by the pullback of a potential $\bar{U}: \Prob \to \mathbb R$ as $U(\varphi) = \bar{U}(\varphi_*\mu)$. Using this construction, Newton's equations on the diffeomorphism group are the second-order differential equation of the form \cite{khesin2021geometric}
\begin{equation}
\label{Newtons_eqs}
\nabla^G_{\dot\varphi} \dot\varphi = - \nabla^G U(\varphi)\,,
\end{equation}
where $\nabla^G$ is the covariant derivative defined by the metric \eqref{L2_metric}. The covariant derivative on $\Df$ is lifted pointwise from the metric covariant derivative on $M$, and by computing the gradient of the
pullback potential energy using an infinite-dimensional analogue of the divergence theorem (see \cite{khesin2021geometric} Lemma 2.3), the Newton equations \eqref{Newtons_eqs} admit the equivalent expression
\begin{equation}
\label{Newton_L2_metric_gradient}
\nabla_{\dot \varphi} \dot \varphi = - \bigg(\nabla \frac{\delta \bar{U}}{\delta \rho} \bigg)\circ \varphi \,,
\end{equation}
where $\nabla$ is the gradient defined by the metric on $M$ and the covariant derivative $\nabla_{\dot\varphi}\dot\varphi$ is associated to the material derivative of the Eulerian velocity field via
\begin{equation}
\nabla_{\dot\varphi} \dot\varphi = \left(\frac{D \bu}{Dt}\right) \circ \varphi = \big(\partial_t \bu + \nabla_{\bu}\bu\big) \circ \varphi \,,
\end{equation}
where the coordinate expression of the covariant derivative is given by $(\nabla_{\bu}\bu)^k = u^i \partial_i u^k + \Gamma^k_{ij} u^i u^j$ where $\Gamma^i_{jk}$ are the Christoffel coefficients. The Newton system can be extended to incorporate other thermodynamic processes using a more general pullback potential energy \cite{khesin2021geometric}.\\

\noindent
\emph{Adiabatic Compressible Flow}
\vspace{0.1cm}

An adiabatic fluid possesses an extended thermodynamic state involving temperature and mass density, remaining isolated within fluid parcels without external heat exchange, doing work only through changes in pressure \cite{dolzhansky2012fundamentals}. It is convenient in the context of the variational formulation to work with the entropy of the fluid, and we consider the thermodynamic state as an element $(\rho, \pi) \in \Prob \times \mathcal{V}(M)$, where 
\begin{equation}
\label{volume_forms}
\mathcal{V}(M) = \left\{\nu \in \Omega^d(M) \,:\, \nu > 0  \right\} 
\end{equation}
is the space of volume forms. The entropy density $\pi \in \mathcal{V}(M)$ can be written as $\pi = \rho s \mu$ where $s \in C_+^{\infty}(M)$ is the specific entropy and the fluid potential energy becomes
\begin{equation}
\bar{U}(\rho,s) = \int_M \rho e(\rho,s) \mu \,,
\end{equation}
where $e(\rho,s)$ is the internal energy density. In an adiabatic fluid, the second law of thermodynamics 
\begin{equation}
\vartheta(\rho,s) ds \geq \delta Q \geq 0\,,
\end{equation}
where $\vartheta(\rho,s)$ is the temperature and $\delta Q$ is the change in heat, holds with equality $\vartheta ds = \delta Q$. This reversibility of the flow gives the continuity equation \eqref{pi_continuity} for the entropy density and in turn implies that the specific entropy is advected
\begin{equation}
\partial_t s + u \cdot \nabla s = 0 \implies s(t) = s_0 \circ \varphi^{-1}\,,
\end{equation}
using the continuity equation for the mass density. The first law of thermodynamics $dU = \delta Q - \delta W$ can then be expressed as the Gibbs relation
\begin{equation}
\label{first_law}
de(\rho,s) =  \vartheta(\rho,s)ds + p(\rho,s)d(\rho^{-1})\,,
\end{equation}
where the temperature and pressure are defined by the partial differential relations
\begin{equation}
\label{thermodynamic_quantities}
\vartheta(\rho,s) = \partial_s e(\rho,s) \,, \qquad p(\rho,s) = \rho^{2} \partial_\rho e(\rho,s) \,.
\end{equation} 
The potential on the diffeomorphism group becomes
\begin{equation}
\label{full_pullback_potential}
U(\varphi) = \bar{U}(\varphi_* \mu, s_0 \circ \varphi^{-1}) \,,
\end{equation} 
and the Newton's equations are then defined by its metric gradient.
\begin{lem}
The gradient of the potential \eqref{full_pullback_potential} with respect to the $L^2$ metric \eqref{L2_metric} is given by
\begin{equation}
\nabla^GU(\varphi) = \left(\nabla h(\rho,s) - \vartheta(\rho,s)\nabla s\right) \circ \varphi\,,
\end{equation}
where $h(\rho,s) = e(\rho,s) + \rho^{-1}p(\rho,s)$ is the specific enthalpy.
\end{lem}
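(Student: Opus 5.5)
We compute the variation of $U$ along a curve $\varphi_\epsilon$ in $\Df$ with $\varphi_0=\varphi$ and $\partial_\epsilon\varphi_\epsilon|_{\epsilon=0} = \bv\circ\varphi$, where $\bv\in\mathfrak{X}(M)$ is tangent to $\partial M$. We then identify the result with $G_\varphi(\nabla^G U,\bv\circ\varphi)$. Write $\rho = \varphi_*\mu$, viewed as a density relative to $\mu$, and $s = s_0\circ\varphi^{-1}$. The first step is the standard variational formulas for pushforward and advection: differentiating $\rho_\epsilon = (\varphi_\epsilon)_*\mu$ and $s_\epsilon = s_0\circ\varphi_\epsilon^{-1}$ at $\epsilon=0$ gives
\begin{equation*}
\delta\rho = -\nabla\cdot(\rho \bv)\,, \qquad \delta s = -\bv\cdot\nabla s\,.
\end{equation*}
The second formula follows from $s_\epsilon\circ\varphi_\epsilon = s_0$; the first is the infinitesimal version of the continuity equation (Lie derivative of the volume form).

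The second step is the chain rule applied to $\bar U(\rho,s)=\int_M \rho e(\rho,s)\mu$. Using the thermodynamic relations \eqref{thermodynamic_quantities}, we get $\delta\bar U/\delta\rho = e+\rho\,\partial_\rho e = e + p/\rho = h$ and $\delta \bar U/\delta s = \rho\,\partial_s e = \rho\vartheta$. Hence
\begin{equation*}
\frac{d}{d\epsilon}\Big|_{\epsilon=0} U(\varphi_\epsilon) = \int_M \Big( -h\,\nabla\cdot(\rho\bv) - \rho\vartheta\, \bv\cdot\nabla s\Big)\mu = \int_M \rho\,\big(\nabla h - \vartheta\nabla s\big)\cdot \bv\,\mu\,,
\end{equation*}
where the divergence term is integrated by parts. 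The boundary term vanishes because $\bv\cdot n|_{\partial M}=0$.

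The final step converts this Eulerian pairing into the Lagrangian metric \eqref{L2_metric}. Since $\varphi_*\mu=\rho\mu$, the change of variables gives $\int_M \rho\, (\bw\cdot\bv)\,\mu = \int_M (\bw\circ\varphi)\cdot(\bv\circ\varphi)\,\mu = G_\varphi(\bw\circ\varphi,\bv\circ\varphi)$. Applying this with $\bw = \nabla h - \vartheta\nabla s$ identifies $\nabla^G U(\varphi) = (\nabla h-\vartheta\nabla s)\circ\varphi$, since $\bv$ is arbitrary. This is consistent with \eqref{Newton_L2_metric_gradient} in the barotropic case $\vartheta\nabla s=0$.

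The main obstacle is bookkeeping of conventions in step one. We need the correct sign and form of $\delta\rho$ and $\delta s$ under the pushforward convention \eqref{pushforward_action}. We also need to confirm that the metric weight $\rho$ produced by the change of variables exactly cancels the $\rho$ multiplying both terms. I would verify the sign of $\delta\rho$ by checking it against the continuity equation. A secondary subtlety is that the first law \eqref{first_law} is written in specific variables. So the identity $\rho\,\partial_\rho e = p/\rho$ must be read off from $p=\rho^2\partial_\rho e$, which gives $\delta\bar U/\delta\rho = h$ rather than $e$.
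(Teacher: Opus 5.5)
Your proposal is correct and follows the paper's proof. The paper likewise differentiates $\bar U(\rho,s)$ along a path using $\dot\rho = -\nabla\cdot(\rho\bu)$ and $\dot s = -\bu\cdot\nabla s$, integrates by parts to get $\int_M \rho\, g(\nabla h - \vartheta\nabla s,\bu)\,\mu$, and identifies this with the $L^2$ metric pairing. Your version, with an explicit variation direction $\bv\circ\varphi$ and the tangency argument for dropping the boundary term, is only a slightly more careful presentation of the same computation.
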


\makeatletter
\renewenvironment{proof}[1][\proofname]{%
  \par
  \pushQED{\qed}%
  \normalfont\normalsize
  \topsep6\p@\@plus6\p@\relax
  \trivlist
  \item[\hskip\labelsep\itshape #1\@addpunct{.}]%
  \ignorespaces
}{%
  \popQED\endtrivlist\@endpefalse
}
\makeatother

\begin{proof}
Let $\varphi(t) \in \Df$ be a path with $\rho(t)\mu = \varphi_*\mu$ and $s(t) = s_0 \circ \varphi^{-1}$. By definition of the Riemannian gradient, we can write   
\begin{equation*}
G_\varphi(\nabla^G U(\varphi), \dot\varphi)  = \frac{d}{dt} U(\varphi) = \frac{d}{dt} \bar U(\rho,s) \,,
\end{equation*}
from which it follows that 
\begin{equation*}
\begin{aligned}
\frac{d}{dt}\bar{U}(\rho,s) &= \int_M \left(\rho\partial_s e(\rho,s) \dot{s}  + (e(\rho,s) + \rho \partial_\rho e(\rho,s))\dot{\rho}\right) \mu
\\
&= \int_M \left(-\rho \vartheta(\rho,s)g(\bu,\nabla s) - h(\rho,s)\nabla \cdot (\bu \rho)\right) \mu
\\
& = \int_M \rho g(\nabla h - \vartheta \nabla s, \bu) \mu = G_\varphi(\left(\nabla h(\rho,s) - \vartheta(\rho,s)\nabla s\right) \circ \varphi, \dot{\varphi})\,,
\end{aligned}
\end{equation*}
integrating by parts and using the definition of the metric.
\end{proof}

\begin{cor}
The Eulerian form of the Newton's equations \eqref{Newtons_eqs} with potential \eqref{full_pullback_potential} is equivalent to the compressible Euler equations
\begin{equation}
\label{full_compressible_euler}
\begin{aligned}
\partial_t \rho \bu + \nabla \cdot(\rho \bu \otimes \bu +p(s,\rho) I) & = 0 \,,
\\
\partial_t \rho + \nabla \cdot (\rho \bu) &= 0 \,,
\\
\partial_t \pi + \nabla \cdot(\pi \bu) & = 0 \,,
\end{aligned}
\end{equation}
with $\dot\varphi = \bu \circ \varphi$, $\rho \mu = \varphi_* \mu$, and $\pi = \rho s$ where $s(t) = s_0 \circ \varphi^{-1}$.
\end{cor}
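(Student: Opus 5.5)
The plan is to combine the preceding Lemma with the identification of the covariant derivative as a material derivative. The Lemma gives the Newton equations \eqref{Newtons_eqs} in the pointwise Lagrangian form
\begin{equation*}
\nabla_{\dot\varphi}\dot\varphi = -\left(\nabla h(\rho,s) - \vartheta(\rho,s)\nabla s\right)\circ\varphi \,.
\end{equation*}
Composing with $\varphi^{-1}$ then yields the Eulerian momentum equation
\begin{equation*}
\partial_t \bu + \nabla_{\bu}\bu = -\nabla h + \vartheta \nabla s \,.
\end{equation*}

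The central step is to show that $\nabla h - \vartheta\nabla s = \rho^{-1}\nabla p$. I would derive this from the Gibbs relation \eqref{first_law}. Since $h = e + p\rho^{-1}$, we get $dh = de + \rho^{-1}dp + p\,d(\rho^{-1})$. Substituting $de = \vartheta ds + p\,d(\rho^{-1})$ gives
\begin{equation*}
dh = \vartheta\, ds + \rho^{-1} dp \,,
\end{equation*}
and hence $\nabla h - \vartheta\nabla s = \rho^{-1}\nabla p$ pointwise. This is exactly the thermodynamic identity that cancels the temperature term, and it is the only place where the specific form of the potential enters. The momentum equation therefore reduces to $\rho(\partial_t\bu + \nabla_{\bu}\bu) = -\nabla p$.

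Next I would obtain the continuity equations from the kinematic definitions. Differentiating $\rho\mu = \varphi_*\mu$ along $\dot\varphi = \bu\circ\varphi$ gives $\partial_t\rho + \nabla\cdot(\rho\bu) = 0$, using $\frac{d}{dt}\varphi_*\mu = -\mathcal{L}_{\bu}(\varphi_*\mu)$. Similarly, $s = s_0\circ\varphi^{-1}$ gives the advection equation $\partial_t s + \bu\cdot\nabla s = 0$. Multiplying the advection equation by $\rho$ and adding $s$ times the mass continuity equation gives $\partial_t\pi + \nabla\cdot(\pi\bu) = 0$.

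Finally, I would convert the momentum equation to conservative form using the product rule:
\begin{equation*}
\partial_t(\rho\bu) + \nabla\cdot(\rho\bu\otimes\bu) = \rho(\partial_t\bu + \nabla_{\bu}\bu) + \bu\big(\partial_t\rho + \nabla\cdot(\rho\bu)\big) \,.
\end{equation*}
The second term vanishes by mass continuity, and writing $\nabla p = \nabla\cdot(pI)$ gives the first equation of \eqref{full_compressible_euler}. On a curved $M$, the divergence of the tensor $\rho\bu\otimes\bu$ must be read covariantly so that the Christoffel terms match those in $\nabla_{\bu}\bu$. This bookkeeping, together with applying the Gibbs identity correctly in its differential form, is the only point needing care. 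The converse direction follows by reversing each step, since every step is an equivalence given $\rho>0$.
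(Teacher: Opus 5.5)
Your route is the same as the paper's: use the Lemma for $\nabla^G U$, rewrite $\nabla h - \vartheta\nabla s$ as $\rho^{-1}\nabla p$, compose with $\varphi^{-1}$, obtain the continuity equations from the kinematic constraints, and pass to conservative form with the product rule. Your continuity and conservative-form steps are correct and more explicit than the paper's. The problem is the step you yourself call central: as written it does not produce the identity you claim. Substituting $de = \vartheta\,ds + p\,d(\rho^{-1})$ into $dh = de + \rho^{-1}dp + p\,d(\rho^{-1})$ makes the two $p\,d(\rho^{-1})$ terms add rather than cancel:
\[
dh = \vartheta\,ds + \rho^{-1}dp + 2p\,d(\rho^{-1}) \,.
\]
This would give $\nabla h - \vartheta\nabla s = \rho^{-1}\nabla p - 2p\rho^{-2}\nabla\rho$, and hence the wrong momentum equation.

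The cause is that the Gibbs relation \eqref{first_law}, as printed, has the wrong sign relative to the definitions \eqref{thermodynamic_quantities}. From $p = \rho^2\partial_\rho e$ you get $\partial_\rho e\,d\rho = p\rho^{-2}\,d\rho = -p\,d(\rho^{-1})$, so the correct relation is $de = \vartheta\,ds - p\,d(\rho^{-1})$. This is the usual $de = \vartheta\,ds - p\,dv$ with $v = \rho^{-1}$. With this sign the terms cancel and $dh = \vartheta\,ds + \rho^{-1}dp$ follows.

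The paper sidesteps the issue by deriving the identity directly from \eqref{thermodynamic_quantities}:
\[
\nabla h = \partial_\rho e\,\nabla\rho + \vartheta\nabla s + \rho^{-1}\nabla p - p\rho^{-2}\nabla\rho = \vartheta\nabla s + \rho^{-1}\nabla p \,.
\]
This matches the convention already used in the Lemma's proof, where $e + \rho\,\partial_\rho e = h$. Once you correct the sign, or use this chain-rule computation, the rest of your argument goes through unchanged.
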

\begin{proof}
The pressure definition \eqref{thermodynamic_quantities} relates to the temperature and enthalpy via
\begin{equation}
\frac{1}{\rho} \nabla p = \nabla h - \vartheta \nabla s \,,
\end{equation}
which allows us to write the non-conservative form of the equations by composing the Newton system with the inverse $\varphi^{-1}$ to give
\begin{equation}
\partial_t \bu + \nabla_{\bu} \bu = -\frac{1}{\rho}\nabla p\,.
\end{equation}
Multiplying by the mass density and using the conservation of mass and the identity
\begin{equation*}
\nabla \cdot (\rho \bu \otimes \bu) = \rho \nabla_{\bu} \bu +  \bu \nabla \cdot (\rho \bu) 
\end{equation*}
then gives the conservative form of the momentum equation. The continuity equations are defined directly from the constraints $(\pi,\varrho) = \varphi_*(\pi_0, \rho_0\mu)$ defining the pullback potential. 
\end{proof}

The full compressible Euler equations are commonly expressed in a flux form with the state variable $(\rho \bu, \rho, E)$, where $E$ is the total energy density
\begin{equation}
\label{total_energy_density}
E = \rho\big( e(\rho,s) + \frac{1}{2}|\bu|^2\big) \,.
\end{equation}
Given an internal energy density $e = e(\rho,s)$, strict positivity of the temperature $\vartheta(\rho,s) = \partial_s e(\rho,s) > 0$ ensures that $s \mapsto e(\rho,s)$ is invertible. This yields a relation $s = s(\rho,e)$ and an expression of the pressure as
\begin{equation}
p(\rho,E) = p\big(\rho,s\big(\rho,E/\rho - \frac{1}{2}|\bu|^2\big)\big)\,,
\end{equation}
and an equivalent form of the equations of motion \eqref{full_compressible_euler} given by
\begin{equation}
\label{full_compressible_euler}
\begin{aligned}
\partial_t \rho \bu + \nabla \cdot(\rho \bu \otimes \bu +p(E,\rho) I) & = 0 \,,
\\
\partial_t \rho + \nabla \cdot (\rho \bu) &= 0 \,,
\\
\partial_t E + \nabla \cdot((E + p) \bu) & = 0 \,.
\end{aligned}
\end{equation}
In an ideal gas, the internal energy density is given by
\begin{equation}
\label{ideal_gas_law}
e(\rho, s) = \frac{c_v}{\gamma -1}\rho^{\gamma-1}e^{s/c_v}\,,
\end{equation}
where $c_v$ is the specific heat capacity at constant volume and $\gamma >1$ is the adiabatic ratio $\gamma = c_v/c_p$. This gives the relation $p = (\gamma -1)\rho e$, from which we can write the pressure in the equivalent forms
\begin{equation}
\label{total_energy_ideal_gas}
p(E,\rho) = (\gamma - 1) \left(E - \frac{1}{2}\rho|\bu|^2\right), \qquad p(\rho,s) = \rho^{\gamma}e^{s/c_v} \,.
\end{equation}
The pressure relations \eqref{total_energy_ideal_gas} will be used to consistently relate the thermodynamic extension of the IGR equations \eqref{full_compressible_euler_regularized}, written in terms of the dynamical variables $(\rho, \rho u, E)$, to the TIGRE system \eqref{eq:TIGRE} written in terms of $(\rho, \rho u, s)$. The TIGRE system uses the transported thermodynamic variables $(\rho,s)$ as they can be related explicitly to the state $\varphi(t) \in \Df$ without reference to the dynamics. This was an enabling step to define the thermodynamically constrained information geometric regularization of the system \eqref{full_compressible_euler}. Before elaborating on this construction further, we first recall the basic geometric structures of information geometry used to define the regularization.

\subsection{Hessian Manifolds}

Information geometry (IG) relates to the study of differential geometric structures on the space of probability densities and measures \cite{ay2017information, amari2016information}. Originating from Rao \cite{rao1945information} and Chentsov's \cite{chentsov1982statistical} studies on Fisher information and culminating in Amari's seminal differential geometric theory of information \cite{amari1980theory, amari1982differential}, IG is now a mature field of study with a host of applications in statistics, machine learning, artificial intelligence, and neuroscience \cite{amari2016information}. While the classical notions in IG are formulated for finite-dimensional parametric families of probabilities (statistical manifolds) \cite{lauritzen1987statistical}, infinite-dimensional extensions of the theory have been devised \cite{pistone1995infinite, ay2017information, khesin2024information, bauer2024p}, and our work is formally in a smooth setting. The information geometric mechanics used to derive the regularized compressible flow equations differs from the Riemannian metric geometry defining the Newton's equations \eqref{Newtons_eqs}. Here, we recall the basic structures needed to describe these mechanical connections and their constrained form to an embedded submanifold of diffeomorphisms. \\

\noindent
\emph{Dual Geodesic Equation}
\vspace{0.1cm}

Let $\mathcal{E}$ be a (possibly infinite-dimensional) smooth manifold and $\psi: \mathcal{E} \to \mathbb{R}$ a strictly convex functional. The second Fr{\'e}chet derivative of this potential defines a metric of the form $G^{\psi} = D^2 \psi$, and the pair $(\mathcal{E}, G^{\psi})$ forms a Hessian manifold \cite{shima1997geometry}. Fundamental to these spaces is the dual pair of affine connections $(\nabla, \nabla^*)$. The so-called primal connection $\nabla$ is defined by
\begin{equation*}
G^{\psi}_\nu(X,Y) = X(Y\psi) - (\nabla_X Y)\psi\,,
\end{equation*}
for all $X,Y \in T_\nu \mathcal{E}$, and the dual connection $\nabla^*$ is defined via the relation
\begin{equation*}
X[G^{\psi}(Y,Z)] = G^{\psi}(\nabla_X Y, Z) + G^{\psi}(Y, \nabla^*_X Z) \,.
\end{equation*}
This pair symmetrizes to give the Levi--Civita connection of the metric
\begin{equation}
\nabla^{LC} = \frac{1}{2}( \nabla + \nabla^*) \,.
\end{equation}
Their difference is encoded by the Amari--Chentsov tensor, formed by the symmetric cubic tensor
\begin{equation}
\label{AC_tensor}
C(X,Y,Z) = \left(\nabla_X g\right)(Y,Z) \,.
\end{equation}
In particular, the $(1,2)$-tensor $C^{\sharp}$ defined by $G^\psi((C^{\sharp})_X Y, Z) = C(X,Y,Z)$ gives the identity
\begin{equation}
\label{fundamental_relation}
\nabla^*_X Y = \nabla_X Y + (C^\sharp)_X Y \,.
\end{equation}
The equations of information geometric mechanics can be associated to paths $\eta(t) \in \mathcal{E}$ satisfying the dual geodesic equation
\begin{equation*}
\nabla^*_{\dot{\eta}} \dot\eta = \nabla_{\dot\eta} \dot\eta + (C^{\sharp})_{\dot{\eta}} \dot\eta = 0  \,,
\end{equation*}
which defines a notion of straight lines with respect to the dual affine connection. Using the identity \eqref{fundamental_relation}, the dual geodesic equation can be expressed equivalently as 
\begin{equation}
\label{dual_geodesic_w_LC}
\nabla^*_{\dot\eta}\dot\eta = \nabla^{LC}_{\dot\eta} \dot\eta + \frac{1}{2}\big(C^{\sharp}\big)_{\dot\eta} \dot\eta = 0  \,,
\end{equation}
conveniently separating the Riemannian contribution associated to the kinetic energy minimizing motions and the corrections associated to the information geometric regularization.

\subsection{Pullback of the Dual Geodesics}
\label{sec:AC_computations}

The equations \eqref{full_compressible_euler} are only locally well-posed, and solutions $\varphi(t) \in \Df$ satisfying \eqref{Newtons_eqs} can exit $\Df$ into the space of maps $C^\infty(M,M)$ through the `boundary' of the diffeomorphism group
\begin{equation}
\label{boundary_Diff}
\partial\Df = \left\{F \in C^{\infty}(M,M) \,:\, J_{\mu}(F) = 0 \right\} \,,
\end{equation}
in finite time. The space \eqref{boundary_Diff} describes non-invertible transformations of the fluid domain with singular Jacobian determinant expressing the onset of singularities due to shock formation. A geometric intuition behind IGR is to move this boundary to be infinitely far away by introducing a convex barrier functional that diverges as $J_\mu(\varphi) \to 0$. Particle collisions can then only occur asymptotically as $t \to \infty$, resulting in dynamics that remain within the diffeomorphism group \cite{cao2023information, cao2024information}. \par
The non-convexity of the mapping $\varphi \mapsto J_\mu(\varphi)$ obstructs the definition of a Hessian metric directly on $\Df$. Cao and Sch{\"a}fer handled this obstruction by defining the barrier functional in the ambient space $\Df \times \Prob$ and use an embedding of the form
\begin{equation}
\label{embedding}
\iota: \Df \to \Df \times \Prob \,, \qquad \iota(\varphi) = (\varphi, \varphi^*\mu) \,.
\end{equation}
In the ambient space, the non-convexity could be realized as a submanifold constraint and the equations of motion defined by restriction of the dual geodesic equation on the submanifold 
\begin{equation*}
\mathcal{M} = \iota(\Df) \subset \Df \times \Prob \,.
\end{equation*} 
The differential of \eqref{embedding} is a linear isomorphism of the tangent spaces $T_\varphi \Df \simeq T_{\iota(\varphi)}\mathcal{M}$, where the latter can be defined by the image of the projection 
\begin{equation}
\Pi_\eta: T_\eta\mathcal{E} \to T_\eta \mathcal{M}\,,
\end{equation}
at $\eta = \iota(\varphi)$. The embedding induces a pullback metric and Amari--Chentsov defined, respectively, by
\begin{equation}
\label{pullback_objects}
\begin{aligned}
\tilde{G}_{\varphi}^{\psi}(U,V) &\coloneqq (\iota ^* G^{\psi})(U,V) = G^\psi(D\iota_\varphi(U), D\iota_\varphi(V))\,,
\\
\tilde{C}_{\varphi}(U,V,W) &\coloneqq (\iota ^* C)(U,V,W) = C(D\iota_\varphi(U), D\iota_\varphi(V), D\iota_\varphi(W))\,,
\end{aligned}
\end{equation}
for all $U,V,W \in T_\varphi\Df$. The analogue of the tensor $C^{\sharp}$ is defined using the pullback metric as 
\begin{equation}
\label{index_raised_pullback_tensor}
\tilde G^{\psi}_\varphi(\tilde{C}^\sharp_U(V), W) = \tilde{C}_{\varphi}(U,V,W)\,.
\end{equation}
Using these pullback objects on $\Df$, we define the analogue dual affine connection as
\begin{equation}
\label{pullback_dual_covariant_deriv}
\tilde{\nabla}^* = \tilde{\nabla}^{LC} + \frac{1}{2}\tilde{C}^{\sharp} \,,
\end{equation}
where $\tilde{\nabla}^{LC}$ is the Levi--Civita connection defined by the pullback metric \eqref{pullback_objects}. A curve $\varphi(t) \in \Df$ is said to satisfy the pullback dual geodesic equation if 
\begin{equation}
\label{pullback_dual_geodesics}
\tilde\nabla^*_{\dot\varphi} \dot\varphi = \tilde\nabla^{LC}_{\dot\varphi}\dot\varphi + \frac{1}{2}\tilde{C}^{\sharp}_{\dot{\varphi}}(\dot\varphi) = 0 \,. 
\end{equation}
We have the following equivalence to the projected dual geodesic equation. 
\begin{lem}
\label{lem:projected_geodesics}
If the curve $\eta(t) = \iota(\varphi(t))$ satisfies the restricted dual geodesic equation 
\begin{equation}
\label{restricted_geodesic}
\Pi_\eta(\nabla^*_{\dot\eta}\dot\eta) = 0\,,
\end{equation}
then $\varphi(t) \in \Df$ satisfies the pullback dual geodesic equation \eqref{pullback_dual_geodesics}.
\end{lem}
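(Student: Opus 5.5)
The plan is to compare the two equations term by term after pairing with tangent vectors. Write $\eta = \iota\circ\varphi$, so $\dot\eta = D\iota_\varphi(\dot\varphi)$. The tangent space $T_\eta\mathcal{M}$ is the image of $D\iota_\varphi$, and we take $\Pi_\eta$ to be the $G^\psi$-orthogonal projection onto it. Then \eqref{restricted_geodesic} is equivalent to
\begin{equation*}
G^\psi_\eta\big(\nabla^*_{\dot\eta}\dot\eta,\, D\iota_\varphi(W)\big) = 0 \qquad \text{for all } W \in T_\varphi\Df\,.
\end{equation*}
If $\Pi_\eta$ is a general projection, we use orthogonality here and note it; the metric projection is the natural choice. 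Likewise, \eqref{pullback_dual_geodesics} is equivalent to
\begin{equation*}
\tilde G^\psi_\varphi\big(\tilde\nabla^*_{\dot\varphi}\dot\varphi, W\big) = 0 \qquad \text{for all } W\,,
\end{equation*}
since $\tilde G^\psi$ is nondegenerate. This uses strict convexity of $\psi$ together with injectivity of $D\iota_\varphi$, which the paper asserts as an isomorphism onto $T_\eta\mathcal{M}$. It therefore suffices to show
\begin{equation*}
G^\psi\big(\nabla^*_{\dot\eta}\dot\eta, D\iota(W)\big) = \tilde G^\psi\big(\tilde\nabla^*_{\dot\varphi}\dot\varphi, W\big)\,.
\end{equation*}

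Using \eqref{dual_geodesic_w_LC}, split the left side into a Levi--Civita part and a cubic part.

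\emph{Cubic part.} By definition of $C^\sharp$ and \eqref{pullback_objects},
\begin{equation*}
\tfrac12 G^\psi\big((C^\sharp)_{\dot\eta}\dot\eta, D\iota W\big) = \tfrac12 C(D\iota\dot\varphi, D\iota\dot\varphi, D\iota W) = \tfrac12\tilde C(\dot\varphi,\dot\varphi,W)\,.
\end{equation*}
By \eqref{index_raised_pullback_tensor} this equals $\tfrac12\tilde G^\psi(\tilde C^\sharp_{\dot\varphi}\dot\varphi, W)$. This step is pure linear algebra.

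\emph{Levi--Civita part.} Here we use the standard Gauss-formula fact for isometric immersions. The induced metric $\tilde G^\psi = \iota^*G^\psi$ has Levi--Civita connection given by the tangential projection of the ambient one:
\begin{equation*}
D\iota\big(\tilde\nabla^{LC}_{\dot\varphi}\dot\varphi\big) = \Pi_\eta\big(\nabla^{LC}_{\dot\eta}\dot\eta\big)\,.
\end{equation*}
To justify this in the present infinite-dimensional setting without invoking existence theorems, we argue via the Koszul formula. The tangential projection of the ambient Levi--Civita connection, transported by $D\iota$, defines a connection on $\Df$. It is torsion-free, because brackets of $\iota$-related fields are $\iota$-related and the normal component of the ambient torsion-free expression vanishes. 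It is metric-compatible, because $X[\tilde G(Y,Z)]$ computed in the ambient space only sees tangential components. By uniqueness from the Koszul formula, which holds whenever the metric is weakly nondegenerate and a connection exists, it coincides with $\tilde\nabla^{LC}$. Pairing with $D\iota W$ then gives
\begin{equation*}
G^\psi\big(\nabla^{LC}_{\dot\eta}\dot\eta, D\iota W\big) = \tilde G^\psi\big(\tilde\nabla^{LC}_{\dot\varphi}\dot\varphi, W\big)\,.
\end{equation*}

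Adding the two parts gives the identity, so vanishing of the projected ambient equation implies the pullback dual geodesic equation. The argument also gives the converse.

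The main obstacle is the Levi--Civita step: checking that the projected ambient connection is the Levi--Civita connection of the pullback metric in infinite dimensions, where only weak nondegeneracy is available. We handle it by stating the comparison at the level of the Koszul identity paired against arbitrary $D\iota W$, so that existence of the connection is never needed. Working formally in the smooth setting, as the paper does, the remaining step of identifying the tangent field from these pairings follows from nondegeneracy of $\tilde G^\psi$.
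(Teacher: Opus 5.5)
Your proposal is correct and follows the paper's proof. The Levi--Civita part is handled by the Gauss formula for the isometric embedding $\iota$, and the cubic part by chasing the definitions of $C^\sharp$, $\tilde C$, and $\tilde C^\sharp$ together with the $G^\psi$-orthogonality of $\Pi_\eta$. The remaining differences are cosmetic: you pair against $D\iota_\varphi(W)$ and conclude by nondegeneracy of $\tilde G^\psi$, whereas the paper identifies $\Pi_\eta(\nabla^*_{\dot\eta}\dot\eta) = D\iota_\varphi(\tilde\nabla^*_{\dot\varphi}\dot\varphi)$ and uses injectivity of $D\iota_\varphi$; and you add a Koszul-formula justification of the Gauss formula, which the paper simply cites.
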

\begin{proof}
We can consider the expression for the equation \eqref{restricted_geodesic} in terms of $\varphi(t)$,
\begin{equation}
\Pi_\eta(\nabla^*_{\dot{\eta}} \dot\eta) = \Pi_{\iota(\varphi)}\left(\nabla^{LC}_{D\iota(\dot\varphi)}D\iota\dot\varphi + \frac{1}{2}C^{\sharp}_{D\iota(\dot{\varphi})}(D\iota \dot\varphi) \right) \,,
\end{equation}
and simplify each component individually. Using Gauss' formula for the restriction of the covariant derivative and the isometry $\iota: (\Df,\tilde{G}^{\psi}) \to (\mathcal{M},G^{\psi})$, which holds by construction, we get that
\begin{equation}
\label{first_term_covariant_derivative}
D\iota_\varphi\left( \tilde\nabla^{LC}_X Y \right) = \Pi_{\iota(\varphi)}\left(\nabla^{LC}_{D\iota(\dot\varphi)}D\iota(\dot\varphi)  \right) \,.
\end{equation}
The definition of the ambient operator $C^{\sharp}$ then gives us
\begin{equation*}
G^{\psi}_{\iota(\varphi)}(C^{\sharp}_{D\iota(\dot\varphi)}D\iota(\dot\varphi), D\iota(X)) 
= \tilde{C}_{\varphi}(\dot\varphi, \dot\varphi, X) 
= \tilde{G}^\psi_\varphi(\tilde{C}^\sharp_{\dot{\varphi}} \dot\varphi,X) = G^\psi_{\iota(\varphi)}(D\iota_\varphi( \tilde{C}^\sharp_{\dot{\varphi}} \dot\varphi), D\iota(X))\,,
\end{equation*}
using the isometry in the last equality. Because $D\iota(T_\varphi\Df) = T_\eta\mathcal{M}$ and $\Pi_\eta$ is the $G^\psi$-orthogonal projector onto the tangent space of the submanifold, it follows that
\begin{equation}
\label{second_term_AC_tensor}
\Pi_\eta\left(C^{\sharp}_{D\iota(X)}D\iota(Y)\right) = D\iota_\varphi\left(\tilde{C}^{\sharp}_X Y\right) \,.
\end{equation}
Combining the identities \eqref{first_term_covariant_derivative} and \eqref{second_term_AC_tensor}, we get that 
\begin{equation*}
\Pi_\eta\left(\nabla^*_{\dot\eta}\dot\eta\right) = D\iota_{\varphi}\left( \tilde\nabla^{LC}_{\dot\varphi}\dot\varphi + \frac{1}{2}\tilde{C}^{\sharp}_{\dot{\varphi}}(\dot\varphi) \right) = 0 \,,
\end{equation*}
which implies that $\varphi(t)$ satisfies \eqref{pullback_dual_geodesics} because $D\iota_\varphi: T_\varphi\Df \to T_\eta\mathcal{M}$ is a linear isomorphism.
\end{proof}
Lemma~\ref{lem:projected_geodesics} establishes a convenient formula to derive the regularized equations of motion in the form \eqref{pullback_dual_covariant_deriv}, which we now verify for the pressureless IGR equations.

\subsection{Information Geometric Regularization}
\label{sec:IGR_barotropic} 
In this section, we demonstrate that the pressureless IGR equations are equivalent to a pullback geodesic equation of the form \eqref{pullback_dual_geodesics}. This formulation provides an explicit separation of the two entropic pressures contributing to $\Sigma$, also discussed in~\cite{barham2025hamiltonian}, resulting from a curvature-induced term in the pullback Levi--Civita connection and the barrier geometry encoded by the Amari--Chentsov tensor.\par

The IGR equations proposed by \cite{cao2023information} were formulated using a convex barrier of the form
\begin{equation}
\label{convex_barrier}
\psi: \Df \times \Prob \to \mathbb{R} \,, \qquad \psi(\varphi,\nu) = \frac{1}{2} \int_M |\varphi(x)|^2 d\mu(x) - \alpha \int_M \log(\nu(x))d\mu(x) \,,
\end{equation}
involving a transport cost and a log-barrier function penalizing the degeneration of the Jacobian determinant with the constant $\alpha > 0$ controls the strength of the regularization. The Hessian of the barrier \eqref{convex_barrier} is given by
\begin{equation}
\label{Hessian_barrier}
D^2\psi_{\varphi,\nu}[(U,\eta),(V,\xi)] = \int_M U \cdot V \mu + \alpha \int_M \frac{\eta \xi}{\nu^2} \mu \,,
\end{equation}
which pulls back to a divergence-weighted metric on $\Df$ \cite{barham2025hamiltonian}.
\begin{lem}
\label{lemma:pullback_metric_barotropic}
The pullback Hessian metric defined by the embedding \eqref{embedding} and barrier \eqref{convex_barrier} is given by
\begin{equation}
\label{diverence_weighted_metric}
G^{\psi}_\varphi(U,V) =  \int_M \rho(\bu \cdot \bv  + \alpha (\nabla \cdot \bv) (\nabla \cdot \bu)) \mu\,,
\end{equation}
where $U, V \in T_\varphi \Df$ with $U =  \bu  \circ \varphi$ and $V = \bv \circ \varphi$. 
\end{lem}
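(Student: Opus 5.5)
The plan is to evaluate the definition \eqref{pullback_objects} directly, $\tilde G^\psi_\varphi(U,V) = D^2\psi_{\iota(\varphi)}[D\iota_\varphi U, D\iota_\varphi V]$, using the Hessian \eqref{Hessian_barrier}. Writing $D\iota_\varphi U = (U, \delta_U(\varphi^*\mu))$, we need two pieces: the transport part and the log-barrier part.

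\textbf{Transport part.} The first component gives $\int_M U\cdot V\,\mu = \int_M (\bu\cdot\bv)\circ\varphi\,\mu$. Changing variables with $y=\varphi(x)$ and $\varphi_*\mu=\rho\mu$, this becomes $\int_M \rho\,\bu\cdot\bv\,\mu$, exactly as in \eqref{L2_metric}.

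\textbf{Variation of the density.} Identify the $\Prob$-component $\nu = J_\mu(\varphi)$ through $\varphi^*\mu = J_\mu(\varphi)\mu$, using \eqref{pullback_action} with $\rho\equiv1$. Take a curve $\varphi_\epsilon$ with $\partial_\epsilon\varphi_\epsilon|_{0} = U = \bu\circ\varphi$. The Lie derivative formula gives
\begin{equation*}
\frac{d}{d\epsilon}\Big|_{0}\varphi_\epsilon^*\mu = \varphi^*(\mathcal{L}_{\bu}\mu) = \varphi^*\big((\nabla\cdot\bu)\mu\big) = \big((\nabla\cdot\bu)\circ\varphi\big)\,J_\mu(\varphi)\,\mu\,.
\end{equation*}
So the tangent vector in the density slot is $\eta_U = \big((\nabla\cdot\bu)\circ\varphi\big)\nu$, with $\nu = J_\mu(\varphi)$. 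Equivalently, this is Jacobi's formula for the derivative of a determinant.

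\textbf{Barrier part.} Substituting into the second term of \eqref{Hessian_barrier} gives
\begin{equation*}
\alpha\int_M \frac{\eta_U\eta_V}{\nu^2}\mu = \alpha\int_M \big((\nabla\cdot\bu)(\nabla\cdot\bv)\big)\circ\varphi\,\mu\,.
\end{equation*}
The factors of $J_\mu(\varphi)$ cancel exactly, which is the role of the log-barrier. The same change of variables as in the transport part turns $\int_M f\circ\varphi\,\mu$ into $\int_M f\rho\,\mu$, yielding $\alpha\int_M\rho(\nabla\cdot\bu)(\nabla\cdot\bv)\mu$. Adding the two pieces gives \eqref{diverence_weighted_metric}.

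\textbf{Main obstacle.} The only delicate step is the density variation: correctly differentiating the pullback of $\mu$ along $\varphi_\epsilon$ and expressing the Eulerian divergence composed with $\varphi$. The rest is bookkeeping. Two conventions need care along the way. First, the change of variables must use $\rho\mu=\varphi_*\mu$ and not the pullback, so that the weight $\rho$ appears rather than $J_\mu(\varphi)$. Second, the mass constraint in $\Prob$ only restricts tangent vectors to have zero mean; this holds automatically, since $\int\nabla\cdot\bu\,\mu=0$ for fields tangent to $\partial M$, so it does not affect the computation.
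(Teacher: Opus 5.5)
Your proposal is correct and follows the paper's proof essentially step for step. You compute $D\iota_\varphi U = \big(U, ((\nabla\cdot\bu)\circ\varphi)\,\varphi^*\mu\big)$, substitute into the Hessian \eqref{Hessian_barrier} so that the Jacobian factors cancel, and change variables with $\varphi^{-1}$ using $\varphi_*\mu=\rho\mu$. Your extra remarks on the Lie-derivative step and on the zero-mean tangency condition in $\Prob$ are sound; they only make explicit what the paper leaves implicit.
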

\begin{proof}Letting $\gamma(t) \in \Df$ be such that $\gamma(0) = \varphi$ and $\dot{\gamma}(0) = U = \bu \circ \varphi \in T_\varphi\Df$, we see that  
\begin{equation}
D\iota\vert_{\varphi}[U] = \left.\frac{d}{dt}\right\vert_{t=0}(\gamma(t),\gamma^*\mu) = (U, (\nabla \cdot\bu) \circ \varphi \cdot \varphi^*\mu)\,.
\end{equation}
Using the definition of the pullback Hessian metric
\begin{equation}
G^{\psi}_\varphi(U,V) = \left(\iota^* D^2\psi[U,V]\right)(\varphi) = D^2 \psi\vert_{\iota(\varphi)}\left[D\iota_\varphi[U], D\iota_\varphi [V]\right] \,,
\end{equation}
the expression \eqref{Hessian_barrier}, and a change of variables with the inverse map $\varphi^{-1}$, we see that
\begin{equation}
\begin{aligned}
G^{\psi}_\varphi(U,V) &=  \int_M (U \cdot V + \alpha ((\nabla \cdot \bv) (\nabla \cdot \bu)) \circ \varphi) \mu = \int_M \rho(\bu \cdot \bv  + \alpha (\nabla \cdot \bv) (\nabla \cdot \bu)) \mu\,,
\end{aligned}
\end{equation}
which establishes the claim. 
\end{proof}
The derivation of the dual geodesic equation is simplified using the Helmholtz--Hodge decomposition 
\begin{equation}
\label{unweighted_helmholtz}
\mathfrak{X}(M) = \mathfrak{X}_\mu(M) \oplus \nabla C^\infty(M) \,,
\end{equation}
where $\mathfrak{X}_\mu(M)$ is the space of divergence-free ($\nabla \cdot \bv = 0$) vector fields and the orthogonality is defined with respect to the unweighted $L^2$ inner product
\begin{equation}
\label{unweighted_L2}
\dual{\bu}{\bv} = \int_M g(\bu,\bv )\mu \,.
\end{equation}
In the weighted $L^2$ inner product \eqref{L2_metric}, it follows that if $\bu \in \mathfrak{X}(M)$ satisfies
\begin{equation}
\label{lem:weighted_Helmholtz}
\int_M \rho g(\bu, \bv) \mu = 0 \,,\qquad \forall \bv \in \mathfrak{X}_\mu(M)\,,
\end{equation}
then there exists a $\Sigma \in C^{\infty}(M)$ such that $\bu = \rho^{-1}\nabla \Sigma$. 
\begin{thm}
\label{thm:LC_connection_Hessian}
The geodesic equation defined by the Levi--Civita connection of the metric \eqref{diverence_weighted_metric}
\begin{equation}
\tilde\nabla^{LC}_{\dot\varphi}\dot\varphi = 0 
\end{equation}
is equivalent to the partial differential equation
\begin{equation}
\label{EP_eqs} 
\begin{aligned}
\partial_t \bu + \nabla_{\bu} \bu &= -\frac{1}{\rho}\nabla \Sigma^{LC}\,,
\\
\partial_t \rho + \nabla \cdot(\rho \bu) & = 0 \,,
\\
\rho^{-1} \Sigma^{LC} - \alpha \nabla \cdot \left(\rho^{-1}\nabla\Sigma^{LC}\right) &= \alpha(\mathrm{tr}((\nabla \bu)^2) - \mathrm{Ric}(\bu,\bu)) \,,
\end{aligned}
\end{equation}
in the Eulerian frame where $\dot{\varphi} = \bu \circ \varphi$ and $\rho\mu = \varphi_* \mu$.
\end{thm}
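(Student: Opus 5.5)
Geodesics of the metric \eqref{diverence_weighted_metric} are critical points of the kinetic energy
\[
E(\varphi)=\tfrac12\int_0^1 \tilde G^\psi_\varphi(\dot\varphi,\dot\varphi)\,dt=\tfrac12\int_0^1\!\int_M \rho\big(|\bu|^2+\alpha(\nabla\cdot\bu)^2\big)\mu\,dt,
\]
so I would derive the Euler--Arnold (Euler--Poincar\'e) equation for this right-invariant-in-form but $\rho$-dependent Lagrangian, rather than computing $\tilde\nabla^{LC}$ from Christoffel symbols. I take variations $\delta\varphi=\bw\circ\varphi$ with $\bw(0)=\bw(1)=0$. The induced Eulerian variations are $\delta\bu=\partial_t\bw+\nabla_\bu\bw-\nabla_\bw\bu$ (the Lin constraint) and $\delta\rho=-\nabla\cdot(\rho\bw)$, where $\rho\mu=\varphi_*\mu$.

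The key steps are as follows.

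\emph{First variation.} Substitute these variations into $\delta E$:
\[
\delta E=\int\!\!\int \Big[\rho\,\bu\cdot\delta\bu+\alpha\rho(\nabla\cdot\bu)\,\nabla\cdot\delta\bu+\tfrac12\big(|\bu|^2+\alpha(\nabla\cdot\bu)^2\big)\delta\rho\Big]\mu\,dt.
\]
Write $m=\rho\bu-\alpha\nabla\big(\rho\,\nabla\cdot\bu\big)$ for the momentum conjugate to $\bu$. Integrating by parts in space and time, and using $\partial_t\rho+\nabla\cdot(\rho\bu)=0$, should give
\[
\partial_t m+\nabla_\bu m+(\nabla\bu)^T m+m\,\nabla\cdot\bu=\rho\nabla\Big(\tfrac12|\bu|^2+\tfrac\alpha2(\nabla\cdot\bu)^2\Big).
\]
On a Riemannian $M$ the transpose terms carry metric contractions.

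\emph{Extracting a pressure.} The $\rho\bu$ part of this equation reproduces $\rho(\partial_t\bu+\nabla_\bu\bu)$ plus gradients, since $\rho\bu\cdot\nabla\bu^{T}$ gives $\rho\nabla\tfrac12|\bu|^2$. The remaining $\alpha$-terms are shown to be of the form $\nabla(\cdot)$ plus a term proportional to $\nabla\cdot\bu$ times forces. Rather than tracking every term, I would project. Pair the equation with divergence-free test fields $\bv\in\mathfrak X_\mu(M)$: the $\alpha$-weighted part of the metric vanishes on such $\bv$, so the weak form reduces to $\int\rho(\partial_t\bu+\nabla_\bu\bu)\cdot\bv\,\mu=0$. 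By the weighted Helmholtz statement \eqref{lem:weighted_Helmholtz}, there is then $\Sigma^{LC}$ with $\partial_t\bu+\nabla_\bu\bu=-\rho^{-1}\nabla\Sigma^{LC}$. Equivalently, the geodesic equation is $\tilde G^\psi(\tilde\nabla^{LC}_{\dot\varphi}\dot\varphi,\bv\circ\varphi)=0$ for all $\bv$, and for $\bv$ divergence free this is the plain $L^2$ statement.

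\emph{Determining $\Sigma^{LC}$.} Next test against gradient fields $\bv=\nabla f$. The weak form then has two parts: the $L^2$ part gives $\int\rho\,\bv\cdot(D_t\bu)$, and the $\alpha$ part gives $\alpha\int\rho\,(\nabla\cdot\bv)\,D_t(\nabla\cdot\bu)$ plus corrections coming from the time derivative of $\rho$ and of the divergence operator along the flow. Two identities drive the computation. The first is
\[
D_t(\nabla\cdot\bu)=\nabla\cdot(D_t\bu)-\mathrm{tr}((\nabla\bu)^2)-\mathrm{Ric}(\bu,\bu),
\]
where $D_t=\partial_t+\nabla_\bu$; it follows from commuting covariant derivatives, and this is exactly where the Ricci term enters. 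The second substitutes $D_t\bu=-\rho^{-1}\nabla\Sigma^{LC}$. Collecting terms and using the arbitrariness of $f$ should yield the elliptic equation
\[
\rho^{-1}\Sigma^{LC}-\alpha\nabla\cdot(\rho^{-1}\nabla\Sigma^{LC})=\alpha\big(\mathrm{tr}((\nabla\bu)^2)-\mathrm{Ric}(\bu,\bu)\big).
\]
This requires checking that the $\rho$-transport terms and the $\tfrac12(\nabla\cdot\bu)^2\delta\rho$ term cancel against the $(\nabla\cdot\bu)^2$ terms produced by the integration by parts. The sign of the Ricci term depends on whether one commutes or expands $\nabla\cdot(\nabla_\bu\bu)$, so I would fix the convention carefully. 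The continuity equation holds by construction of $\rho$.

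\emph{Main obstacle and converse.} The main obstacle is this last bookkeeping step: showing that every $\alpha(\nabla\cdot\bu)^2$ contribution cancels, so that only $\mathrm{tr}((\nabla\bu)^2)-\mathrm{Ric}$ survives. I would first check it in the flat one-dimensional case, where $\mathrm{tr}((\nabla\bu)^2)=(\nabla\cdot\bu)^2$, so that the cancellation pattern is visible, and then generalize using the commutation identity. The converse is straightforward: given a solution of the PDE, reversing the steps shows that the weak geodesic equation holds for all $\bv$, which gives the stated equivalence.
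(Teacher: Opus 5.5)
\textbf{There is a genuine gap.} Your framework matches the paper's: the energy of $\tilde G^\psi$, the constrained variations $\delta\bu=\partial_t\bw+\nabla_\bu\bw-\nabla_\bw\bu$ and $\delta\rho=-\nabla\cdot(\rho\bw)$, and the strong Euler--Poincar\'e form with $m=\rho\bu-\alpha\nabla(\rho\nabla\cdot\bu)$ are all correct. The gap is in the step you use to avoid ``tracking every term.'' Testing the geodesic equation in $\tilde G^\psi$ against a divergence-free $\bv$ only gives $\int_M\rho\,(\tilde\nabla^{LC}_{\dot\varphi}\dot\varphi\circ\varphi^{-1})\cdot\bv\,\mu=0$. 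The vanishing of the $\alpha$-part of the metric on $\bv$ says nothing about $D_t\bu=\partial_t\bu+\nabla_\bu\bu$, because $D_t\bu$ is not the Levi--Civita acceleration of $\tilde G^\psi$.

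In the first variation with $\nabla\cdot\bw=0$, the $\alpha$-terms do not vanish individually. Write $\theta=\nabla\cdot\bu$. Then $\nabla\cdot\delta\bu=\nabla\cdot[\bu,\bw]=-\bw\cdot\nabla\theta$ and $\delta\rho=-\bw\cdot\nabla\rho$. So the two $\alpha$-contributions are $-\alpha\int_M\rho\theta\,\bw\cdot\nabla\theta\,\mu$ and $+\alpha\int_M\rho\theta\,\bw\cdot\nabla\theta\,\mu$, and they cancel only against each other. In strong form, this is the fact that $-\alpha\theta\nabla(\rho\theta)-\tfrac{\alpha}{2}\rho\nabla\theta^2=-\alpha\nabla(\rho\theta^2)$. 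Here the non-gradient piece coming from $m\,\nabla\cdot\bu$ becomes a gradient only after combining with the $\delta\ell/\delta\rho$ term. If the leftovers really contained ``a term proportional to $\nabla\cdot\bu$ times forces,'' as you describe them, the projection would fail. Your gradient-test step is also left at ``should yield.'' So neither half of the argument is actually established.

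The paper closes exactly this point with one computation. By the continuity equation, $(\partial_t+\mathcal{L}_\bu)[\alpha\,d(\rho\theta)\otimes\mu]+\tfrac{\alpha}{2}\rho\,d\theta^2\otimes\mu=\alpha\,d(\rho D_t\theta)\otimes\mu$. Hence the Euler--Poincar\'e equation is exactly $\rho D_t\bu=\alpha\nabla(\rho D_t\theta)$. Given this, both of your steps take one line each:
\begin{itemize}
\item $\rho D_t\bu$ is a gradient, which gives $\Sigma^{LC}$.
\item Your commutation identity together with $D_t\bu=-\rho^{-1}\nabla\Sigma^{LC}$ gives the elliptic equation up to a constant, with no further $(\nabla\cdot\bu)^2$ cancellations to check.
\end{itemize}
The ``bookkeeping'' you defer is therefore the proof itself, and the test-field split does not bypass it.

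\textbf{On the Ricci sign.} Your identity $D_t\theta=\nabla\cdot(D_t\bu)-\mathrm{tr}((\nabla\bu)^2)-\mathrm{Ric}(\bu,\bu)$ is correct in the convention where the round sphere has positive Ricci curvature. You can check this with $\bu=\partial_\phi$ on the unit $S^2$ at the equator, where $\nabla\cdot\nabla_\bu\bu=1$, $\mathrm{tr}((\nabla\bu)^2)=0$ and $\mathrm{Ric}(\bu,\bu)=1$. Carried through, it yields $\alpha(\mathrm{tr}((\nabla\bu)^2)+\mathrm{Ric}(\bu,\bu))$ on the right-hand side. The stated $-\mathrm{Ric}$ comes from the paper's use of $[\nabla_i,\nabla_j]u^i=-R_{jk}u^k$, whose sign is opposite to the usual Ricci identity. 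No choice between commuting and expanding $\nabla\cdot(\nabla_\bu\bu)$ changes this, so do not try to force the stated sign. On a flat $M$ the issue is moot.
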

\begin{proof}
The equations \eqref{EP_eqs} can be derived as a form of Euler--Poincar{\'e} (EP) equations, and the proof follows directly from the connection between the EP equations and geodesic equations on Lie groups (see \cite{holm1998euler, holm2009geometric, marsden1997introduction} for further background). By a change of variables, the kinetic energy can be written in the Eulerian frame as
\begin{equation*}
\ell(\bu,\rho) = \frac{1}{2}G^{\psi}_\varphi(\dot\varphi,\dot\varphi)\,,
\end{equation*}
where $\bu = \dot\varphi \circ \varphi^{-1}$ and $\rho \mu = \varphi_* \mu$, variations of the state variable $\varphi$ induce constrained variations of the velocity field and mass density. Let $\eta(t,s)$ be a variation of $\varphi(t)$ such that $\eta(t,0) = \varphi(t)$ and denote $\bv \circ \varphi = \partial_s\eta(t,0) = \delta \varphi$ with fixed endpoints such that $\delta \varphi(0) = \delta \varphi(T) = 0$. It follows then that
\begin{equation*}
\delta \bu = \dot{\bv} + [\bu, \bv] \,, \qquad \delta \varrho = -\mathcal{L}_{\bv} \varrho\,,
\end{equation*}
where $\bv = \delta \varphi \circ \varphi^{-1}$ and $[\cdot, \cdot]$ is the Jacobi-Lie bracket on vector fields. Application of Hamilton's variational principle then gives 
\begin{equation*}
\begin{aligned}
\delta \int_0^T \ell(\bu,\varrho)dt  &= \int_0^T \dual{\frac{\delta \ell }{\delta \bu}}{\dot{\bv} -\text{ad}_{\bu}(\bv)} - \dual{\frac{\delta \ell}{\delta \varrho}}{\mathcal{L}_{\bu}(\varrho)} \, dt 
\\
&= \int_0^T \dual{(\partial_t  + \mathcal{L}_{\bu})\frac{\delta \ell }{\delta \bu}}{\bv}  - \dual{\varrho\otimes d \frac{\delta \ell}{\delta \varrho}}{\bv} \, dt \,,
\end{aligned}
\end{equation*}
which defines the EP equations \eqref{EP_eqs} using the variational derivatives
\begin{equation*}
\begin{aligned}
\frac{\delta \ell}{\delta \bu} &= \rho \bu ^{\flat} - \alpha d (\rho \nabla \cdot \bu) \,,
\\
\varrho \otimes d\frac{\delta \ell}{\delta \varrho} & = \rho d \bigg( \frac{1}{2}|\bu|^2 + \frac{\alpha}{2} (\nabla \cdot \bu)^2\bigg) \otimes \mu  \,,
\end{aligned}
\end{equation*}
and setting the variation of the action to zero. In order to see the particular form of the equations, we write the system as
\begin{equation*}
\underbrace{(\partial_t + \mathcal{L}_{\bu})[\rho \bu^\flat \otimes \mu] - \rho d\frac{1}{2}|\bu|^2 \otimes \mu}_{\mathrm{I}} = \underbrace{(\partial_t + \mathcal{L}_u)[\alpha d(\rho\nabla \cdot \bu)\otimes \mu]}_{\mathrm{II}} + \underbrace{\frac{\alpha}{2}\rho d(\nabla \cdot \bu)^2 \otimes \mu}_{\mathrm{III}}  \,.
\end{equation*}
The $(\mathrm{I})$ terms simplify as
\begin{equation*}
(\mathrm{I}) = [\left( \partial_t\rho + \mathcal{L}_{\bu} \rho + \rho (\nabla \cdot \bu) \right)\bu^{\flat}]\otimes \mu + \rho\big( \partial_t \bu + \mathcal{L}_{\bu} \bu^{\flat} - \frac{1}{2} d|\bu|^2 \big) \otimes \mu = \rho(\partial_t \bu + \nabla_{\bu} \bu)^{\flat} \otimes \mu\,.
\end{equation*}
Using commutativity of the Lie derivative and exterior derivative with the continuity equation for the mass density, we can simplify the $(\mathrm{II})$ term as
\begin{equation*}
\begin{aligned}
\mathrm{(II)} + \mathrm{(III)} &= \alpha d(\partial_t + \mathcal{L}_u)(\rho\nabla \cdot \bu) \otimes \mu + \alpha [d\rho(\nabla \cdot \bu)]\nabla \cdot \bu \otimes \mu  +  \alpha\rho (\nabla \cdot \bu)d(\nabla \cdot \bu) \otimes \mu
\\
& = \cancel{-\alpha d (\rho(\nabla \cdot \bu)^2) \otimes \mu} + \alpha d [\rho (\partial_t + \mathcal{L}_{\bu})(\nabla \cdot \bu)] \otimes \mu + \cancel{\alpha d (\rho (\nabla \cdot \bu)^2) \otimes \mu} \,.
\end{aligned}
\end{equation*}
Combing these two, taking the sharp $\sharp$ operator, and extracting the density, we can write 
\begin{equation}
\label{unsimplified_momentum_eq}
\rho\frac{D\bu}{Dt} - \alpha\nabla \left(\rho \nabla \cdot \left(\frac{D\bu}{Dt}\right)\right) = -\alpha \nabla \left(\rho\left(\nabla \cdot \nabla_{\bu} \bu -\bu \cdot \nabla (\nabla \cdot \bu) \right)\right) \,.
\end{equation}
Letting $\nabla_i u^j = \partial_i u^j + \Gamma^j_{ik}u^k$ be the $i$-th coordinate of the covariant derivative of the velocity field, we can write
\begin{equation*}
\begin{aligned}
\nabla \cdot \nabla_{\bu} \bu -\bu \cdot \nabla (\nabla \cdot \bu) &= \nabla_i(u^j \nabla_j u^i) - u^j \nabla_j \nabla_i u^i
\\
&= (\nabla_i u^j)(\nabla_j u^i) + u^j \nabla_i \nabla_j u^i - u^j \nabla_j \nabla_i u^i
\\
&= \mathrm{tr}((\nabla \bu)^2) + u^j[\nabla_i, \nabla_j] u^i = \mathrm{tr}((\nabla \bu)^2) - \mathrm{Ric}(\bu,\bu)\,,
\end{aligned}
\end{equation*}
where we have introduced the Ricci curvature $\mathrm{Ric}(\cdot,\cdot)$ contracted along the velocity field and used the Ricci identity $[\nabla_i, \nabla_j]u^i = -R_{jk}u^k$ for the commutator of the covariant derivatives. On a flat manifold, this curvature term vanishes. \par
The expression \eqref{unsimplified_momentum_eq} implies that the material derivative satisfies the orthogonality condition \eqref{lem:weighted_Helmholtz}. Defining the elliptic operator
\begin{equation*}
A(\rho)[\bv]  = \rho \bv - \alpha\nabla(\rho (\nabla \cdot \bv))\,,
\end{equation*}
there exists a potential $\Sigma^{LC} \in C^{\infty}(M)$ such that
\begin{equation*}
\mathcal{A}(\rho)\frac{D \bu}{Dt} = \mathcal{A}(\rho)\left[\rho^{-1}\nabla \Sigma^{LC}\right] = \nabla \Sigma^{LC} - \alpha \nabla (\rho \nabla \cdot (\rho^{-1}\nabla \Sigma^{LC})) = -\alpha \nabla(\rho \mathrm{tr}((\nabla \bu)^2) - \rho \mathrm{Ric}(\bu,\bu))\,.
\end{equation*}
Absorbing the minus sign such that $\Sigma^{LC} \mapsto -\Sigma^{LC}$ gives $\frac{D\bu}{Dt} = -\rho^{-1} \nabla \Sigma^{LC}$ with
\begin{equation*}
\rho^{-1} \Sigma^{LC} - \alpha  \nabla \cdot \left( \rho^{-1} \nabla \Sigma^{LC} \right) = \alpha (\mathrm{tr}((\nabla \bu)^2) - \mathrm{Ric}(\bu,\bu)) \,,
\end{equation*}
up to a constant, which establishes the claim. 
\end{proof}

\begin{thm}
\label{thm:entropy_AC}
Given the metric \eqref{diverence_weighted_metric}, the tensor $\tilde{C}^{\sharp}$ defined by the relation \eqref{index_raised_pullback_tensor} satisfies
\begin{equation}
\begin{aligned}
\tilde C^\sharp_{\dot\varphi} \dot\varphi &= \left(\rho^{-1} \nabla \Sigma^{AC}\right) \circ \varphi \,,
\\
\rho^{-1}\Sigma^{AC} - \alpha \nabla\cdot \big(\rho^{-1}\nabla \Sigma^{AC}\big) &= 2\alpha(\nabla \cdot \bu)^2\,.
\end{aligned}
\end{equation}
\end{thm}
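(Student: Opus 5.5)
The plan is to compute the ambient Amari--Chentsov tensor of the barrier \eqref{convex_barrier}, pull it back along the embedding \eqref{embedding} as in \eqref{pullback_objects}, and then raise the index with the divergence-weighted metric of Lemma~\ref{lemma:pullback_metric_barotropic}. The index raising reduces to an elliptic problem of the same type as in the proof of Theorem~\ref{thm:LC_connection_Hessian}.

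\emph{Step 1: the ambient tensor.} For a Hessian structure $G^\psi = D^2\psi$, the primal connection is the flat (affine) connection of the ambient linear structure. Hence $C(X,Y,Z) = (\nabla_X G^\psi)(Y,Z) = D^3\psi[X,Y,Z]$. The transport cost $\frac12\int|\varphi|^2\mu$ is quadratic, so it contributes nothing. The log-barrier gives
\begin{equation*}
D^3\psi_{\varphi,\nu}[(U,\eta),(V,\xi),(W,\zeta)] = -2\alpha\int_M \frac{\eta\,\xi\,\zeta}{\nu^3}\,\mu \,.
\end{equation*}

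\emph{Step 2: the pullback.} By the proof of Lemma~\ref{lemma:pullback_metric_barotropic}, $D\iota_\varphi[U] = (U, (\nabla\cdot\bu)\circ\varphi\;\varphi^*\mu)$ at $\nu = \varphi^*\mu$. Each ratio $\eta/\nu$ therefore becomes $(\nabla\cdot \bu)\circ\varphi$, and the remaining factor $\nu^{-1}\mu$ combines with the $\mu$ to reproduce $\varphi^*\mu$. Changing variables with $\varphi^{-1}$ as before gives
\begin{equation*}
\tilde C_\varphi(\dot\varphi,\dot\varphi,W) = -2\alpha\int_M \rho\,(\nabla\cdot\bu)^2(\nabla\cdot\bw)\,\mu \,, \qquad W=\bw\circ\varphi\,.
\end{equation*}
I expect this to be the main point of care: the factors of $\nu$ and the Jacobian must collapse exactly to a single density $\rho$. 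Otherwise the right-hand side would pick up a spurious power of $\rho$. I will also fix the overall sign convention for $C$, since a sign flip can be absorbed into $\Sigma^{AC}$ exactly as was done for $\Sigma^{LC}$.

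\emph{Step 3: raising the index.} Write $\tilde C^\sharp_{\dot\varphi}\dot\varphi = \bsym{z}\circ\varphi$. The defining relation \eqref{index_raised_pullback_tensor} with the metric \eqref{diverence_weighted_metric} reads, for all $\bw$ tangent to $\partial M$,
\begin{equation*}
\int_M \rho\big(\bsym{z}\cdot\bw + \alpha(\nabla\cdot\bsym{z})(\nabla\cdot\bw)\big)\mu = -2\alpha\int_M\rho(\nabla\cdot\bu)^2(\nabla\cdot\bw)\,\mu \,.
\end{equation*}
Integrating by parts, with boundary terms vanishing by tangency, gives $\rho\bsym{z} - \alpha\nabla(\rho\nabla\cdot\bsym{z}) = 2\alpha\nabla(\rho(\nabla\cdot\bu)^2)$ weakly. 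Testing against divergence-free $\bw$ shows that $\rho\bsym{z}$ is $L^2$-orthogonal to $\mathfrak{X}_\mu(M)$. The weighted Helmholtz statement \eqref{lem:weighted_Helmholtz} then yields $\bsym{z} = \rho^{-1}\nabla\Sigma^{AC}$. Substituting back gives
\begin{equation*}
\nabla\big(\Sigma^{AC} - \alpha\rho\nabla\cdot(\rho^{-1}\nabla\Sigma^{AC}) - 2\alpha\rho(\nabla\cdot\bu)^2\big)=0 \,.
\end{equation*}
Fixing the free additive constant and dividing by $\rho$ gives the stated elliptic equation. This is the same mechanism as the $\mathcal{A}(\rho)$ argument in Theorem~\ref{thm:LC_connection_Hessian}.
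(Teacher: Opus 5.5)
Your proposal is correct and follows the paper's proof essentially step for step: the ambient cubic term $-2\alpha\int_M \eta\xi\zeta\,\nu^{-3}\mu$, the pullback through $D\iota_\varphi$, index raising with the metric \eqref{diverence_weighted_metric}, the weighted Helmholtz orthogonality giving $\rho^{-1}\nabla\Sigma^{AC}$, and integration by parts up to an additive constant absorbed into $\Sigma^{AC}$; with $C=D^3\psi$ the signs already come out exactly as stated, so no sign flip is needed (and none could be absorbed here, since the statement fixes the sign of $\Sigma^{AC}$ in both lines). The one slip is in the Step 2 bookkeeping: after forming the three ratios $\eta/\nu$, $\xi/\nu$, $\zeta/\nu$ no power of $\nu$ survives, and the single factor $\rho$ comes entirely from the change of variables $\int_M f\circ\varphi\,\mu=\int_M f\,\varphi_*\mu=\int_M f\rho\,\mu$, whereas a leftover $\varphi^*\mu$ as you describe would cancel it.
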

\begin{proof}
We want to find $W = \bw \circ \varphi \in T_\varphi \Df$ such that 
\begin{equation*}
G_\varphi^{\psi}(W,V) = \tilde{C}_\varphi(\dot\varphi,\dot\varphi, V) \,, \qquad \forall \, V = \bv \circ \varphi \in T_\varphi\Df  \,.
\end{equation*}
In the ambient space, the Amari--Chentsov cubic tensor defined by the potential \eqref{convex_barrier} is given by
\begin{equation*}
C_{(\varphi,\nu)}((U,\eta), (V,\xi), (W, \zeta)) = \left(0, -2\alpha \int_M \frac{\xi \eta \zeta}{\nu^3} \mu \right)
\end{equation*}
for all $(U,\eta), (V,\xi), (W, \zeta) \in T_{(\varphi,\nu)}\Df \times \Prob$. Combining with the embedding \eqref{embedding}, we get that 
\begin{equation}
\label{a_relation}
\int_M \rho (\bw \cdot \bv + \alpha (\nabla \cdot \bw)(\nabla \cdot \bv)) \mu = -2\alpha \int_M \rho (\nabla \cdot \bu)^2 (\nabla \cdot \bv) \mu \,,
\end{equation}
which implies that $\bw$ satisfies the orthogonality condition \eqref{lem:weighted_Helmholtz}. Then there exists $\Sigma^{AC} \in C^\infty(M)$ such that $\bw = \rho^{-1} \nabla \Sigma^{AC}$ and together with \eqref{a_relation} gives
\begin{equation*}
\int_M \left(\nabla\Sigma^{AC} - \alpha\nabla \big(\rho \nabla \cdot\big(\rho^{-1} \nabla\Sigma^{AC}\big)\big) \right)\cdot \bv \mu  = 2 \alpha \int_M \big(\nabla\rho (\nabla \cdot \bu)^2\big)\cdot \bv \mu  \,,
\end{equation*}
which holds for all $\bv \in \mathfrak{X}(M)$ and where we have used the the tangency condition of $\bv \in \mathfrak{X}(M)$ to drop the boundary integrals after integrating by parts. This then determines the equation for $\Sigma^{AC}$ up to a constant.  
\end{proof}
Combining these two results, we get the following geometric interpretation of the IGR equations and split form of the entropic pressure as arising from the curvature induced by the embedding and the information geometric correction.
\begin{cor} 
\label{cor:pressureless_IGR} 
The restricted dual geodesic equations \eqref{pullback_dual_geodesics} defined by the barrier function \eqref{convex_barrier} are the Lagrangian form of the pressureless IGR equations, written in conservative form 
\begin{equation} \label{presuless_IGR_eqs} 
\begin{aligned} \partial_t \rho \bu + \nabla \cdot(\rho \bu \otimes \bu + \Sigma I) & = 0 \,,
\\
\partial_t \rho + \nabla \cdot (\rho \bu) &= 0 \,,
\\
\rho^{-1} \Sigma - \alpha \nabla \cdot (\rho^{-1} \nabla \Sigma) &= \alpha \left((\nabla \cdot \bu)^2 + \mathrm{tr}(\nabla \bu)^2) - \mathrm{Ric}(\bu,\bu)\right) \,, 
\end{aligned} 
\end{equation} 
with the regularization potential defined by 
\begin{equation} 
\label{combined_entropy} \Sigma = \Sigma^{LC} + \frac{1}{2}\Sigma^{AC} \,.  \end{equation} 
\end{cor}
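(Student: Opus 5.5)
The plan is to combine Theorem~\ref{thm:LC_connection_Hessian} and Theorem~\ref{thm:entropy_AC} through the splitting \eqref{pullback_dual_geodesics}. The work is linear bookkeeping: both potentials solve the same elliptic equation with different right-hand sides, so their combination solves the summed equation.

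First, I will write the pullback dual geodesic equation as $\tilde\nabla^{LC}_{\dot\varphi}\dot\varphi + \tfrac12\tilde C^\sharp_{\dot\varphi}\dot\varphi = 0$. By Lemma~\ref{lem:projected_geodesics}, this is the restricted equation. I then translate each term to the Eulerian frame.

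\emph{First term.} The proof of Theorem~\ref{thm:LC_connection_Hessian} gives $\tilde\nabla^{LC}_{\dot\varphi}\dot\varphi = \big(\tfrac{D\bu}{Dt} + \rho^{-1}\nabla\Sigma^{LC}\big)\circ\varphi$. More precisely, the geodesic equation with respect to the pullback metric forces $\mathcal A(\rho)\tfrac{D\bu}{Dt}$ to equal a gradient. I will redo that argument with the forcing $-\tfrac12\tilde C^\sharp_{\dot\varphi}\dot\varphi$ added to the right-hand side, rather than treating the Levi--Civita term in isolation.

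\emph{Second term.} By Theorem~\ref{thm:entropy_AC}, $\tilde C^\sharp_{\dot\varphi}\dot\varphi = (\rho^{-1}\nabla\Sigma^{AC})\circ\varphi$. Combining the two, the weighted Helmholtz statement \eqref{lem:weighted_Helmholtz} yields $\tfrac{D\bu}{Dt} = -\rho^{-1}\nabla\Sigma$ with $\Sigma = \Sigma^{LC} + \tfrac12\Sigma^{AC}$.

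\emph{Sign check (main obstacle).} I must confirm that the signs match. In Theorem~\ref{thm:LC_connection_Hessian} the material derivative equals $-\rho^{-1}\nabla\Sigma^{LC}$. For the correction term, moving $\tfrac12\rho^{-1}\nabla\Sigma^{AC}$ to the other side gives $-\tfrac12\rho^{-1}\nabla\Sigma^{AC}$, which is consistent with $\Sigma = \Sigma^{LC} + \tfrac12\Sigma^{AC}$. Both theorems fix their potentials only up to constants and after a sign flip, so I will track the convention $\Sigma^{LC}\mapsto-\Sigma^{LC}$ explicitly to make sure the Amari--Chentsov contribution enters with a positive $(\nabla\cdot\bu)^2$.

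\emph{Elliptic equation.} Adding the equation for $\Sigma^{LC}$ to one half of the equation for $\Sigma^{AC}$, the operator $\rho^{-1}\Sigma - \alpha\nabla\cdot(\rho^{-1}\nabla\Sigma)$ is linear. The right-hand side is therefore
\begin{equation*}
\alpha\big(\mathrm{tr}((\nabla\bu)^2) - \mathrm{Ric}(\bu,\bu)\big) + \alpha(\nabla\cdot\bu)^2 ,
\end{equation*}
which is the stated equation.

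\emph{Conservative form.} I multiply $\partial_t\bu + \nabla_{\bu}\bu = -\rho^{-1}\nabla\Sigma$ by $\rho$. Using the continuity equation, which follows from $\rho\mu = \varphi_*\mu$, together with the identity $\nabla\cdot(\rho\bu\otimes\bu) = \rho\nabla_{\bu}\bu + \bu\,\nabla\cdot(\rho\bu)$ from the earlier corollary, this gives $\partial_t(\rho\bu) + \nabla\cdot(\rho\bu\otimes\bu + \Sigma I) = 0$, since $\nabla\Sigma = \nabla\cdot(\Sigma I)$.
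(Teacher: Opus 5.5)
Your proposal is correct and follows the paper's proof. You express $\tilde\nabla^{LC}_{\dot\varphi}\dot\varphi$ and $\tfrac12\tilde C^\sharp_{\dot\varphi}\dot\varphi$ in the Eulerian frame via Theorems~\ref{thm:LC_connection_Hessian} and~\ref{thm:entropy_AC}, add the two elliptic equations (same linear operator) to get the equation for $\Sigma = \Sigma^{LC} + \tfrac12\Sigma^{AC}$, and pass to conservative form as in the earlier corollary. Your choice to redo the Levi--Civita computation with the Amari--Chentsov term as forcing is a sensible refinement: Theorem~\ref{thm:LC_connection_Hessian} is stated only for geodesics, whereas the paper implicitly uses the identity $(\tilde\nabla^{LC}_{\dot\varphi}\dot\varphi)\circ\varphi^{-1} = \partial_t\bu + \nabla_{\bu}\bu + \rho^{-1}\nabla\Sigma^{LC}$ along arbitrary curves.
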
 
\begin{proof}
Using Theorems~\ref{thm:LC_connection_Hessian} and~\ref{thm:entropy_AC}, we can write the pullback dual geodesic equation \eqref{pullback_dual_geodesics} in the Eulerian frame as 
\begin{equation} 
\label{non_conservative_equations} 
\left(\tilde\nabla^{LC}_{\dot{\varphi}} \dot{\varphi} + \frac{1}{2}\tilde{C}^\sharp_{\dot\varphi} \dot\varphi \right) \circ \varphi^{-1} = \partial_t \bu + \nabla_{\bu} \bu + \frac{1}{\rho} \big(\nabla \Sigma^{LC} + \frac{1}{2}\nabla\Sigma^{AC}\big) = 0 \,. 
\end{equation}
Because both $\Sigma^{LC}$ and $\Sigma^{AC}$ satisfy inhomogeneous elliptic equations with the same differential operator, they combine to give $\Sigma$ defined by \eqref{combined_entropy} and the elliptic equation in \eqref{presuless_IGR_eqs}. The pressureless form of the IGR equations \eqref{presuless_IGR_eqs} follows by rewriting the equations in conservative form.
\end{proof}

\section{Thermodynamically Constrained Information Geometric Regularization}
\label{sec:entropy_barrier}

The starting point of the thermodynamically constrained information geometric regularization is to augment the ambient space with an entropy component in the space of volume forms \eqref{volume_forms} and to embed the diffeomorphism group into the extended space using
\begin{equation}
\label{entropy_embedding}
\jmath : \Df \to  \Df \times \Prob \times \mathcal{V}(M) \,, \qquad \jmath(\varphi) = (\varphi, \varphi^*\mu, \varphi_*(\pi_0)) \,,
\end{equation}
where $\pi_0 = \rho_0 s_0 \mu \in \mathcal{V}(M)$ is the initial entropic volume. The embedding \eqref{entropy_embedding} utilizes a mixed relation involving both the pullback of the initial mass density, performing a volumetric regularization on the flow, and the pushforward of the initial entropic volume, which is designed to constrain this regularization to the thermodynamic structure. We consider the relative entropy, described by the Kullback--Leibler divergence \eqref{KL_divergence} from the initial entropy distribution, to define an extended barrier function in the form 
\begin{equation}
\label{entropic_barrier}
\Psi : \Df \times \Prob \times \mathcal{V}(M) \to \mathbb{R} \,,  \qquad \Psi(\varphi,\nu, \pi) = \psi(\varphi, \nu)  + \beta \mathscr{D}_{KL}(\pi \mid\mid \pi_0) \,,
\end{equation}
where $\psi(\varphi,\nu)$ is the barrier function \eqref{convex_barrier} and $\beta >0$ is a parameter controlling the strength of the thermodynamic regularization. Composing \eqref{entropic_barrier} and \eqref{entropy_embedding}, we get that
\begin{equation}
\label{entropic_barrier_functional}
\Psi \circ \jmath(\varphi) =  \frac{1}{2}\int_M |\varphi|^2  \mu + \alpha\int_M \rho\log(\rho)\mu  + \beta \int_M \pi \log\left(\frac{\pi}{\pi_0} \right) \mu  \,,
\end{equation}
where $\rho = \varphi_* \mu$ and $\pi = \pi_0 \circ \varphi^{-1} \cdot J_\mu(\varphi^{-1})$, using the fact that $J_\mu(\varphi)\circ\varphi^{-1} = J_\mu(\varphi^{-1})^{-1}$. This constrains the paths \eqref{pullback_dual_geodesics} to be simultaneously shock mitigating and thermodynamic length minimizing. In Figure~\ref{fig:geometric_visual}, we visualize the discrepancy between these two constraints where the thermodynamic constraint can be seen as distinguishing trajectories in the $(\varphi, J_\mu(\varphi), \pi)$ space.

\begin{figure}[h!]
    \centering
    \includegraphics[height = 6cm, width=\linewidth]{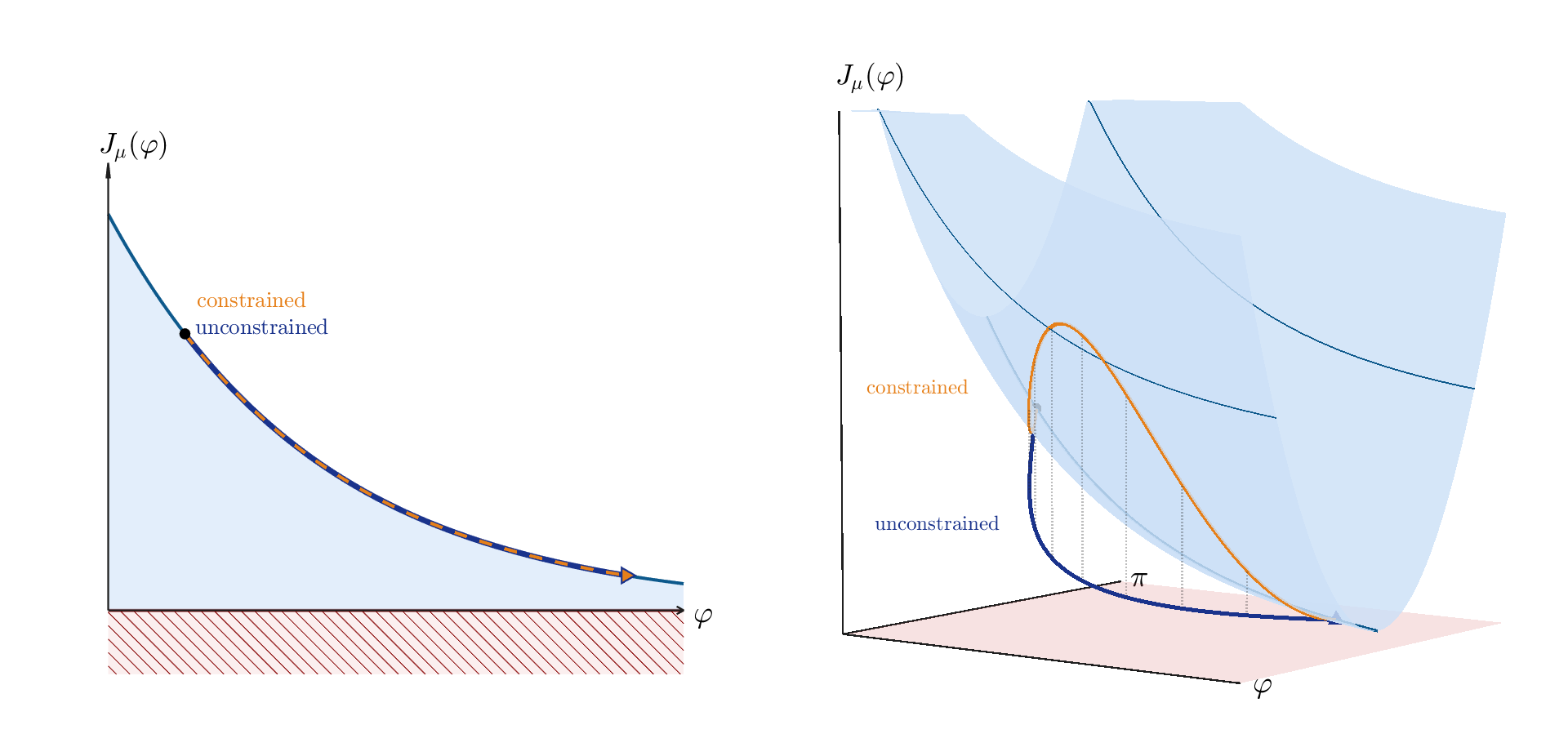}
    \caption{Geometric illustration comparing the thermodynamically constrained and unconstrained paths. Along a slice a constant entropy slice in the $(\varphi, J_\mu(\varphi))$ plane, the two paths collapse to satisfy the same barotropic IGR constraints. In the full state space $(\varphi, J_\mu(\varphi),\pi)$, the additional thermodynamic curvature distinguishes the two trajectories along changing entropy directions.} 
    \label{fig:geometric_visual}
\end{figure}

\subsection{Thermodynamic Constraint}
\label{sec:thermodynamic_constraint}

We first provide a geometric motivation for the particular barrier function \eqref{entropic_barrier_functional} and its thermodynamic constraint on the resulting equations of motion. Entropy minimizing motions can be interpreted in the context of information geometry as selecting admissible paths of minimum thermodynamic length as measured by the Fisher--Rao metric \cite{crooks2007measuring, feng2009far}. This cost also admits a thermodynamic interpretation using a connection between the Wasserstein distance and rates of entropy production \cite{seifert2012stochastic, van2023thermodynamic}. In this section, we discuss these concepts and their relation to the information geometric regularization. We further relate the curvature of the Boltzmann entropy in the Wasserstein geometry to the dynamics of the potential $\chi$ in the TIGRE system \eqref{eq:TIGRE}.\\

\noindent
\emph{The Fisher--Rao Metric}
\vspace{0.1cm}

The entropic embedding \eqref{entropy_embedding} has a natural information-theoretic cost associated to the relative entropy. The KL divergence $\mathscr{D}_{KL}(\varphi_*\pi_0 \mid\mid \pi_0)$ quantifies the information content of the transported distribution $\varphi_* \pi_0$ relative to the prior $\pi_0$, vanishing when the flow leaves the entropy distribution statistically indistinguishable and growing as the flow concentrates or redistributes entropy. The natural geometric structure to measure this statistical indistinguishability comes from the Fisher--Rao metric of information geometry \cite{ay2017information, amari1980theory}, which has also been used to describe a dissipation cost associated with moving between thermodynamic states \cite{crooks2007measuring, feng2009far}. This metric is natural on the space of positive densities because it is the unique invariant metric under the diffeomorphism group action \cite{bauer2016uniqueness}. Because $M$ is oriented, the space of smooth positive densities can be associated to the space of volume forms \eqref{volume_forms} on which all invariant metrics take the form \cite{bauer2016uniqueness}
\begin{equation*}
G_{\nu}(\zeta, \theta) = C_1\int_M \frac{\zeta_\mu}{\nu}\frac{\theta_\mu}{\nu}  \mu + C_2 \int_M \zeta \cdot \int_M \theta
\end{equation*}
for constants $C_1, C_2$ that depend on the total volume $\nu(M)$, where $\zeta = \zeta_\mu \mu$ and $\theta = \theta_\mu \mu$. Restricting to $\Prob$ or setting $C_2 = 0$ yields the Fisher--Rao metric 
\begin{equation}
\label{FR_metric}
G^{fr}_{\nu}(\zeta, \theta) = \int_M\frac{\zeta_\mu}{\nu}\frac{\theta_\mu}{\nu}  \mu  \,.
\end{equation}
The Fisher metric \eqref{FR_metric} can be realized as the Hessian for the family of $\alpha$-divergences, with $\alpha \in (-1,1)$, defined by the convex barrier functionals
\begin{equation}
\label{alpha_divergences}
\mathscr{D}_{\alpha}(\nu \mid\mid \nu_0) = \frac{4}{1- \alpha^2} \left(\int_M \nu_0 \mu - \int_M \nu^{\frac{1-\alpha}{2}} \nu_0^{\frac{1 + \alpha}{2}} \mu \right) \,,
\end{equation}
for fixed $\nu_0 \in \mathcal{V}(M)$. The limits $\alpha \to \pm 1$ recover the Kullback--Leibler divergences $\mathscr{D}_{KL}(\nu\mid\mid\nu_0)$ and $\mathscr{D}_{KL}(\nu_0\mid\mid\nu)$, respectively. The one-parameter family of divergences $\mathscr{D}_\alpha$ possess different parallel transports described by the affine connections $\nabla^{(\alpha)} = \nabla^{LC} - \tfrac{\alpha}{2}C^{\sharp}$ with the Amari--Chentsov tensor
\begin{equation}
C^{fr}(\zeta, \theta, \xi) = - \int_M \frac{\zeta_\mu \theta_\mu \xi_\mu}{\nu^2} \mu  \,.
\end{equation}
The statistical divergences are all equivalent in their definition of a thermodynamic length; however, their impact on the dual geodesic equations varies with the regularization parameter. The IGR equations possess a mixed Fisher--Rao and Wasserstein geometric structure that further connects to a thermodynamic interpretation of the constraint, distinguishing the KL-divergence.  \\

\noindent
\emph{Entropy Production}
\vspace{0.1cm}

Although the adiabatic flow is reversible, the discrepancy between the transported entropy distribution $\varphi_*\pi_0$ and the prior $\pi_0$ still possesses a natural thermodynamic interpretation. In particular, one can associate a thermodynamic cost to a relaxation of $\varphi_* \pi_0 = \nu_2$ back to $\pi_0$, which is a process that would be necessarily irreversible. Transitioning between thermodynamic states $\nu_1, \nu_2 \in \mathcal{V}(M)$ depends on the path taken through state space. Forward and reverse directions of thermodynamic transition are distinguished by the second law of thermodynamics, reflecting a privileged direction (`arrow') of time characterized by the generation of entropy. In classical thermodynamics, a system in energetic exchange with its environment possesses a maximum amount of work extractable as the system relaxes from state $(\rho, \pi)$ to a reference equilibrium $(\rho_0, \pi_0)$ by the \emph{exergy}, which is a state function of the form
\begin{equation}
\mathscr{E}((\rho,\pi) \mid\mid (\rho_0,\pi_0)) = \int_M \rho \left[ e(\rho,s) - e(\rho_0, s_0) + p_0(\rho^{-1} - \rho_0^{-1}) - T_0(s - s_0)\right]\mu \,. 
\end{equation}
The Gouy--Stodola theorem states that the rate of exergy decrease along any irreversible process is proportional to the entropy generation rate. The dissipated work satisfies $W_{diss} = \mathscr{E} - W = T_0 \mathscr{D}_{KL}(\pi\mid\mid \pi_0)$ and measures a gap between this maximum extractable work and the actual work $W$ performed during the transition. The fastest route towards relaxation is through dissipation, and we lastly note how the Wasserstein distance provides a sharp lower bound on the entropy that must be produced. In the case of an overdamped system, the entropy production admits an equivalent path-based interpretation in the context of the dynamical formulation of optimal transport \cite{benamou2000computational} with the Wasserstein distance 
\begin{equation}
W^2_2(\nu_1, \nu_2) = \min_{\bv_t} \int_M \norm{\bv_t}^2\nu_t \,,
\end{equation}
where $\nu_t \in \mathcal{V}(M)$ is a path connecting $\nu_1$ to $\nu_2$ such that $\dot{\nu} = -\mathrm{div}(\nu_t\bv_t)$. In the case of an overdamped system, the irreversible entropy production $S_{gen}$ over a period $\tau$ satisfies bound of the form  $(D \tau) S_{gen} \geq W_2(\nu_1, \nu_2)$ \cite{van2023thermodynamic}, where $D$ is a diffusion coefficient that has even been recently confirmed experimentally \cite{oikawa2025experimentally}. The KL divergence can therefore be similarly interpreted as not necessarily measuring the entropy generated by the flow itself, but rather the information-geometric cost of the equivalent relaxation. It is a proxy for how thermodynamically expensive it would be to undo how the flow map has transformed the entropy distribution. The entropic barrier therefore penalizes flows that create entropic distributions that would be thermodynamically expensive to relax. \\

\noindent
\emph{Curvature in the Wasserstein Geometry}
\vspace{0.1cm}

Considering the entropic volume $\pi(t) \in \mathcal{V}(M)$ to be an element of the space $\mathcal{P}(M)$ where the normalization is now defined by the initial entropic volume $\pi_0(M)$, we can measure the curvature induced by the barrier with respect to the Wasserstein--Otto Riemannian metric structure on space $\Prob$ \cite{otto2001geometry}. In this geometry, tangent vectors at $\pi \in \mathcal{P}(M)$ are associated with the continuity equation source $\zeta = - \nabla \cdot(\pi \nabla \phi)$ for potentials $\phi \in C^{\infty}(M)/\mathbb{R}$, and the metric is defined on the potentials as
\begin{equation*}
\label{Wasserstein_metric}
 G^W_{\pi}(\zeta, \zeta) =  \int_M \pi |\nabla\phi|^2 .
\end{equation*}
The local curvature as expressed by the Hessian of a functional $F: \mathcal{P}(M) \to \mathbb{R}$ in the direction $\zeta = - \nabla \cdot(\pi v)$ is defined by the second variation along Wasserstein geodesics
\begin{equation*}
\mathrm{Hess}_{W_2} F(\pi)[\zeta, \zeta] = \left.\frac{d^2}{dt^2} \right\vert_{t = 0} F(\pi(t)) \,.
\end{equation*}
The KL divergence can be written as
\begin{equation*}
\mathscr{D}_{KL}(\pi \mid \mid \pi_0) = \underbrace{\int_M \pi \log \pi \mu}_{\coloneqq H(\pi)} -  \int_M \pi \log(\pi_0)\mu \,,
\end{equation*}
where the first term is the Boltzmann entropy, and because the second term is linear in $\pi$, its second variation along geodesics vanishes. The KL divergence in the Wasserstein geometry therefore curves in exactly the same way as the Boltzmann entropy of the instantaneous distribution, responding only to the local geometric structure of the current entropy. A similar property does not hold for the $\alpha$-divergences because we are measuring curvature in the Wasserstein geometry. The Hessian of the Boltzmann entropy is given by \cite{lott2009ricci, li2021hessian}
\begin{equation}
\mathrm{Hess}_{W_2} H(\pi)[\zeta,\zeta] = \int_M \pi (|\nabla \bv|^2 + (\mathrm{Ric} + \nabla^2 \log)[\bv,\bv]) \mu \,,
\end{equation}
and the term $(\mathrm{Ric} + \nabla^2 \log\pi)[\bv,\bv]$ is precisely the Bakry--{\'E}mery curvature of the entropy distribution along the velocity field, combining both the geometry of $M$ and the curvature induced by the distribution. This curvature term is also present in the right-hand side of the $\chi$ equation in the TIGRE system \eqref{eq:TIGRE}, indicating that the regularization responds to the instantaneous curvature of the entropy distribution in the direction of the flow, penalizing motions that would concentrate or redistribute entropy in a thermodynamically costly way.

\subsection{TIGRE Equations}

In this section, we provide a derivation of the TIGRE equation as a pullback dual geodesic equation, following an analogous formulation presented for the pressureless IGR equations in Section~\ref{sec:IGR_barotropic}. We first derive the geodesic equations of the metric in the Eulerian frame and then the form of the pullback Amari-Chentsov tensor, beginning with the calculation of the pullback Riemannian metric. 
\begin{lem}
The pullback of the Hessian metric defined by the convex barrier functional \eqref{entropic_barrier_functional} through the embedding \eqref{entropy_embedding} can be expressed in the Eulerian frame as
\begin{equation}
\label{diverence_weighted_metric_entropy}
G^{\Psi}_\varphi(U,V) =  \int_M \rho g(\bu, \bv) \mu + \alpha\int_M \rho (\nabla \cdot \bu)(\nabla \cdot \bv)\mu + \beta \int_M \pi^{-1}(\nabla \cdot( \pi \bu))(\nabla \cdot(\pi \bv)) \mu\,, 
\end{equation}
where $U, V \in T_\varphi \Df$ with $U =  \bu  \circ \varphi$ and $V = \bv \circ \varphi$. 
\end{lem}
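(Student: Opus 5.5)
The plan is to follow the proof of Lemma~\ref{lemma:pullback_metric_barotropic} verbatim for the first two components and add one computation for the entropy component. Because $\Psi$ is a sum of functions of the separate factors $\varphi$, $\nu$, $\pi$, its Hessian is block diagonal:
\begin{equation*}
D^2\Psi_{(\varphi,\nu,\pi)}[(U,\eta,\zeta),(V,\xi,\theta)] = D^2\psi_{\varphi,\nu}[(U,\eta),(V,\xi)] + \beta\int_M \frac{\zeta_\mu\theta_\mu}{\pi_\mu}\,\mu \,.
\end{equation*}
The last term is the second variation of $\pi\mapsto\int\pi\log(\pi/\pi_0)$. It is the Fisher--Rao form \eqref{FR_metric} with weight $\pi^{-1}$ rather than $\pi^{-2}$, since the KL divergence is linear-times-log and not a log-barrier. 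The pullback metric is then the sum of the pullbacks of the two blocks through $D\jmath_\varphi$.

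For the first two components of $D\jmath_\varphi[U]$, the computation in Lemma~\ref{lemma:pullback_metric_barotropic} already gives $(U,(\nabla\cdot\bu)\circ\varphi\,\varphi^*\mu)$. Pulling back $D^2\psi$ and changing variables by $\varphi^{-1}$ reproduces the first two integrals of \eqref{diverence_weighted_metric_entropy} with no further work.

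For the third component, take a curve $\gamma(t)$ with $\gamma(0)=\varphi$ and $\dot\gamma(0)=\bu\circ\varphi$. Writing $\gamma(t)=\gamma_t\circ\varphi$ locally, the standard derivative of a pushforward gives
\begin{equation*}
\frac{d}{dt}\Big|_{t=0}(\gamma(t))_*\pi_0 = -\mathcal{L}_{\bu}(\varphi_*\pi_0) = -\nabla\cdot(\pi\bu)\,\mu \,,
\end{equation*}
where $\pi = \varphi_*\pi_0$ denotes its density with respect to $\mu$. This is the same computation that yields the continuity equation \eqref{pi_continuity}. Inserting $\zeta_\mu=-\nabla\cdot(\pi\bu)$ and $\theta_\mu=-\nabla\cdot(\pi\bv)$ into the entropy block gives $\beta\int_M\pi^{-1}(\nabla\cdot(\pi\bu))(\nabla\cdot(\pi\bv))\mu$ directly in the Eulerian frame, with no change of variables needed.

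The main point to check is the sign and variance bookkeeping of this mixed embedding. The $\Prob$ slot uses the pullback $\varphi^*\mu$, whose derivative carries $(\nabla\cdot\bu)\circ\varphi$ and lives in the Lagrangian frame. The $\mathcal{V}(M)$ slot uses the pushforward, which is naturally Eulerian. One must confirm that the two are combined consistently, which they are because the metric is bilinear and the signs square away. One must also confirm that the $\alpha$ term, after the change of variables, acquires the weight $\rho$ as in Lemma~\ref{lemma:pullback_metric_barotropic}, while the $\beta$ term needs no Jacobian factor. Finally, I would verify that \eqref{entropic_barrier_functional} has the same Hessian as $\Psi$ on the relevant tangent directions, so that the pullback metric is indeed $\jmath^*D^2\Psi$ and not the Hessian of the composite functional. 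The lemma is stated for the former, which is what the computation above produces.
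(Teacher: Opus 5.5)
Your proposal is correct and is essentially the paper's proof. You split the Hessian of $\Psi$ into the $\psi$-block, handled by Lemma~\ref{lemma:pullback_metric_barotropic}, and the entropy block $\beta\int_M \zeta_\mu\theta_\mu\pi^{-1}\mu$, then insert the third component of $D\jmath_\varphi[U]$, namely $-\nabla\cdot(\pi\bu)\,\mu$, which is already in the Eulerian frame. Your sign $-\mathcal{L}_{\bu}(\pi\mu)$ is the correct one for a pushforward (the paper writes $+\mathcal{L}_{\bu}(\pi\mu)$, harmless by bilinearity), and your closing check on the Hessian of the composite functional is unnecessary, since both the lemma and the paper's argument concern $\jmath^*D^2\Psi$, which is exactly what you compute.
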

\begin{proof}
Let $\gamma(t) \in \Df$ be a curve such that $\gamma(0) = \id$ and $\gamma'(0) =  u \in \mathfrak{X}(M)$. We can then compute variations of the embedding \eqref{entropy_embedding} at $\varphi$ as 
\begin{equation*}
\begin{aligned}
\left.\frac{d}{dt}\right\vert_{t = 0} \jmath(\gamma \circ \varphi) &= \bigg(u \circ \varphi, \left.\frac{d}{dt}\right\vert_{t = 0} \varphi^* \gamma^*(\mu),  \left.\frac{d}{dt}\right\vert_{t = 0} \gamma_*\varphi_* (\pi_0\mu)\bigg) 
\\
&= (u \circ \varphi, \varphi^* \mathcal{L}_u(\mu), \mathcal{L}_u(\pi \mu)) \,,
\end{aligned}
\end{equation*}
from which it follows that 
\begin{equation}
\label{differential_embedding}
D\jmath\vert_{\varphi}[U]  = (u \circ \varphi, (\nabla \cdot u) \circ \varphi \cdot J_\mu(\varphi)\mu, \nabla\cdot (\pi u) \mu)\,.
\end{equation}
The Hessian of the modified convex barrier functional \eqref{entropic_barrier} is now given by 
\begin{equation}
D^2\Psi\vert_{(\varphi, \nu)}\left[(U,\eta, \zeta),(V, \xi, \theta)\right] = D^2\psi\left[(U,\eta), (V, \xi)\right] + \beta \int_M \frac{\zeta_\mu \theta_\mu}{\pi} \mu \,,
\end{equation}
where $\zeta = \zeta_\mu \mu$ and $\theta = \theta_\mu \mu$. Combining these two expression gives the pullback metric 
\begin{equation}
\begin{aligned}
(\jmath^*G^{\Psi})_\varphi(U,V) &=  \tilde{G}^{\psi} _\varphi(U,V) + \int_M \pi^{-1}(\nabla \cdot( \pi u))(\nabla \cdot(\pi v)) \,,
\end{aligned}
\end{equation}
which establishes the claim with $\pi \mu = \varphi_*(\pi_0 \mu)$ and Lemma~\ref{lemma:pullback_metric_barotropic}. 
\end{proof}

\noindent
\emph{Velocity Field Constraint}
\vspace{0.1cm}

\noindent
The relevant orthogonality relation for the thermodynamic extension is with respect to the subspace of divergence-free vector fields defined by
\begin{equation}
\label{entropy_div_free_fields}
\mathfrak{X}_{\mu,\pi}(M) = \left\{\bv \in \mathfrak{X}(M) \,:\, \nabla \cdot \bv = 0 \,, \quad \nabla \cdot (\pi \bv) = 0\,, \quad \bv \cdot n \vert_{\partial M } = 0 \right\} \,.
\end{equation}
These two simultaneous constraints distinguish a thermodynamic decomposition from the Helmholtz decomposition used in the IGR formulation and the orthogonal complement of \eqref{entropy_div_free_fields} admits an explicit characterization with an additional constraint on the curl of the velocity field.
\begin{lem}
\label{lem:thermo_Helmholtz}
Let $(M,g)$ be a compact Riemannian manifold potentially with boundary $\partial M$ and suppose that $\rho,\pi \in C^{\infty}(M)$ are positive functions. Let $\bw \in \mathfrak{X}(M)$ be such that 
\begin{equation}
\int_M \rho g(\bw, \bv) \mu = 0 \qquad \forall \bv \in \mathfrak{X}_{\mu,\pi}(M)
\end{equation}
and suppose that there exists a potential $\tilde\chi \in C^{\infty}(M)$ such that $d(\rho \bw^{\flat}) =  d\pi \wedge d \tilde\chi$. Then there exists $\Sigma, \chi \in C^\infty(M)$ such that
\begin{equation}
\label{thermo_helmholtz_decomposition}
w = \frac{1}{\rho}\left(\nabla \Sigma + \pi \nabla \chi\right) \,.
\end{equation}
\end{lem}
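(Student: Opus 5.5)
The idea is to use the curl hypothesis to peel off the $\pi\nabla\chi$ part and reduce to a closed $1$-form, to which a Hodge-type decomposition applies. Concretely, take $\chi\coloneqq\tilde\chi$ and define
\begin{equation*}
\alpha \coloneqq \rho\,\bw^{\flat} - \pi\, d\tilde\chi \,.
\end{equation*}
Then $d\alpha = d(\rho\bw^{\flat}) - d\pi\wedge d\tilde\chi = 0$, so $\alpha$ is closed. It remains to show that $\alpha = d\Sigma$ for some $\Sigma\in C^\infty(M)$. Taking $\sharp$ then gives $\rho\bw = \nabla\Sigma + \pi\nabla\chi$, which is exactly \eqref{thermo_helmholtz_decomposition}.

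Next I will check that $\alpha$ inherits the orthogonality property. For $\bv\in\mathfrak{X}_{\mu,\pi}(M)$, integrating by parts gives
\begin{equation*}
\int_M g(\alpha^\sharp,\bv)\mu = \int_M \rho\, g(\bw,\bv)\mu - \int_M \pi\, g(\nabla\tilde\chi,\bv)\mu = 0 + \int_M \tilde\chi\, \nabla\cdot(\pi\bv)\,\mu = 0 .
\end{equation*}
The boundary term $\int_{\partial M}\tilde\chi\,\pi\, \bv\cdot n$ vanishes by tangency. So $\alpha^\sharp$ is unweighted-$L^2$ orthogonal to $\mathfrak{X}_{\mu,\pi}(M)$, and in particular to every divergence-free, tangent field that also satisfies $\nabla\cdot(\pi\bv)=0$.

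I then apply the Hodge--Morrey decomposition for manifolds with boundary to the closed form $\alpha$. This gives $\alpha = d\Sigma_0 + h$, where $h$ is a harmonic Neumann field: $dh=0$, $\delta h=0$, and $h^\sharp\cdot n=0$. Since $d\Sigma_0$ is harmless, everything reduces to disposing of $h$.

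The main obstacle is the harmonic part $h$, which encodes $H^1(M)$. The orthogonality of $\alpha$ only tests against fields with the extra constraint $\nabla\cdot(\pi\bv)=0$, and $h^\sharp$ itself need not satisfy it. My first attempt is to absorb $h$ into the two potentials. Consider the constraint operator $B\bv = (\nabla\cdot\bv,\ \nabla\cdot(\pi\bv))$ on tangent fields. Its formal $L^2$ adjoint is $(a,b)\mapsto -(\nabla a + \pi\nabla b)$, because the boundary terms drop by tangency. The space $\mathfrak{X}_{\mu,\pi}(M)$ is $\ker B$, so its orthogonal complement is the closure of the range of $B^*$. I will argue that this range is closed in Sobolev spaces: the operator $BB^*$ is a $2\times2$ system of second-order operators with principal symbol $|\xi|^2\begin{pmatrix}1&\pi\\ \pi&\pi^2\end{pmatrix}$. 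This matrix is degenerate, so closedness is where I expect trouble. If so, I fall back on using the closedness of $\alpha$ (i.e.\ the curl hypothesis) to kill the degenerate direction, in the spirit of the weighted statement \eqref{lem:weighted_Helmholtz} used earlier. Then $\alpha^\sharp = \nabla a + \pi\nabla b$ with $a,b$ smooth by elliptic regularity, and I would redefine $\Sigma = a$ and $\chi = \tilde\chi + b$; closedness of $\alpha$ is consistent with this since it forces $d\pi\wedge db=0$. If this functional-analytic route fails, I would add the assumption that $H^1(M;\mathbb{R})=0$, as for a simply connected fluid domain, in which case $h=0$ and $\alpha=d\Sigma$ directly. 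Smoothness of $\Sigma$ follows from that of $\alpha$.
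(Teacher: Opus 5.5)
Your reduction is the same as the paper's, and it is correct. The paper Helmholtz-decomposes $\rho\bw$ first and then subtracts $\pi\,d\tilde\chi$ from the solenoidal part. Either way one gets a closed $1$-form whose exact part is harmless and whose harmonic part $h$ is $L^2$-orthogonal to $\mathfrak{X}_{\mu,\pi}(M)$, so everything hinges on writing $h = da + \pi\,db$. Your proposal does not establish this, for three reasons:
\begin{itemize}
\item The closed-range route stalls, as you anticipate, because $BB^*$ has singular principal symbol.
\item For the same reason there is no elliptic regularity to invoke.
\item Appealing to the closedness of $\alpha$ adds nothing. The form $h$ is closed anyway, and closedness of $da + \pi\,db$ only imposes $d\pi\wedge db = 0$, which is the obstruction rather than a cure.
\end{itemize}
The paper closes this step by asserting that orthogonality forces the component of $h^\sharp$ tangent to the level sets of $\pi$ to vanish, so that $h = f\,d\pi$, and then absorbs it via $f\nabla\pi = \nabla(f\pi) - \pi\nabla f$. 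That assertion is not justified: the tangential projection of a divergence-free field is generally not divergence-free, so it cannot serve as a test field. In fact no argument can close the step, because the statement fails when $H^1(M;\mathbb{R})\neq 0$.

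Here is a counterexample. Take the flat torus $M = \mathbb{S}^1\times\mathbb{S}^1$ with angular coordinates $(x,y)$, and let $s$ be half the period. Set $\rho = 1$, $\pi = 3 + \cos x + \cos y$, $\tilde\chi = 0$, and let $\bw$ be a nonzero constant field, so the curl hypothesis holds trivially. The level set $\{\pi = 3\}$ contains the closed geodesics $\gamma_1 = \{y = x + s\}$ and $\gamma_2 = \{y = s - x\}$, which represent the classes $(1,1)$ and $(1,-1)$. On these curves $\nabla\pi$ vanishes only at the two saddle points.

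The orthogonality hypothesis holds. Any $\bv\in\mathfrak{X}_{\mu,\pi}(M)$ satisfies $\bv\cdot\nabla\pi = 0$, so it is tangent to $\gamma_1$ and $\gamma_2$. Hence the closed flux form $\iota_{\bv}\mu$ has vanishing periods on a basis of $H_1(M;\mathbb{R})$. Its cohomology class, which on the flat torus records the mean $\int_M \bv\,\mu$, is therefore zero, and so $\int_M g(\bw,\bv)\mu = 0$.

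The conclusion fails. If $\bw = \nabla\Sigma + \pi\nabla\chi$, then since $\pi\equiv 3$ on each $\gamma_i$ we get $\oint_{\gamma_i}\bw^\flat = \oint_{\gamma_i}(d\Sigma + 3\,d\chi) = 0$ for $i=1,2$, which forces $\bw = 0$. Here $h = \bw^\flat$ is constant and not parallel to $d\pi$, so this is also a direct counterexample to the paper's claim $h = f\,d\pi$.

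So the lemma needs an additional hypothesis, and your fallback is the right one. If $H^1(M;\mathbb{R}) = 0$, then $\alpha$ is exact and your first paragraph already completes the proof with $\chi = \tilde\chi$, without using the orthogonality hypothesis at all. Note that this excludes the doubly periodic domain used in the paper's two-dimensional experiments, which has exactly the topology of the counterexample.
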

\begin{proof}
We can first write $\rho \bw \coloneqq \bsym{z} = \nabla \Sigma' + \bu$ where $\bu \in \mathfrak{X}_\mu(M)$ using the Helmholtz--Hodge decomposition, and the problem reduces to finding an appropriate decomposition for the divergence-free component. The one-form $\theta = \bu^{\flat} - \pi d \tilde\chi$ since 
\begin{equation*}
d\theta = d\bu^{\flat} - d\pi \wedge d \tilde\chi = d\bsym{z}^\flat - d \pi \wedge d \tilde\chi = 0\,,
\end{equation*}
and therefore $\theta = d\phi + \zeta_H$ by the Hodge decomposition where $\zeta_H \in \Omega^1(M)$ is a harmonic form. Using the orthogonality condition $\bsym{z} \in \mathfrak{X}_{\mu, \pi}(M)^{\perp}$ we then get that 
\begin{equation*}
\int_M g(\zeta_H, \bv)\mu  = 0  \qquad \forall \bv \in \mathfrak{X}_{\mu,\pi}(M) \,.
\end{equation*}
Since $\zeta_H \in \mathfrak{X}_\mu(M)$ this implies the tangential component of $\zeta^{\sharp}_H$ to the level set of $\pi$ vanishes and therefore $\zeta_H = f d\pi$. Altogether this gives 
\begin{equation*}
\rho \bw = \nabla (\Sigma' + \phi) + \pi \nabla \tilde\chi + f \nabla \pi = \nabla( \Sigma' + \phi + f \pi) + \pi \nabla (\tilde\chi - f) \,,
\end{equation*}
and labelling $\chi = \tilde{\chi} - f$ and $\Sigma = \Sigma' + \phi + f\pi$ establishes the claim. 
\end{proof}

\noindent
\emph{Gauge Freedom}
\vspace{0.1cm}

\noindent
The decomposition \eqref{thermo_helmholtz_decomposition} has an infinite-dimensional kernel, defined by the gauge transformation
\begin{equation*}
(\Sigma, \chi) \mapsto (\Sigma + H(\pi), \chi + F(\pi)) \,,
\end{equation*}
where $F$ is a arbitrary smooth function on the range of $\pi$ and $H' = -\pi F'$, which is invariant under the decomposition. Under this transformation, the expression $\nabla \Sigma + \pi\nabla \chi$ determining the velocity field is invariant, and therefore has no effect on the momentum regularization. In our numerical scheme no gauge fixing is applied, however, selecting an appropriate gauge condition could be used to simply the resulting linear system by selecting an equivalence class representative that is best conditioned for the solver for instance. We regard this as an interesting direction for future work. 

\begin{thm}
\label{thm:geodesics_TIGRE}
The geodesic equation defined by the Levi--Civita connection of the metric \eqref{diverence_weighted_metric_entropy} on $\Df$ is equivalent to the system of partial differential equations
\begin{equation}
\label{presureless_IGR_entropic}
\begin{aligned}
\partial_t \bu + \nabla_{\bu} \bu &= \rho^{-1}(\nabla \Sigma + \pi \nabla \chi) \,,
\\
\partial_t \rho + \nabla \cdot (\rho \bu) &= 0 \,,
\\
\partial_t \pi + \nabla \cdot (\pi \bu) &= 0 \,,
\\
\rho^{-1}\Sigma - \alpha\, \nabla \cdot \left(\rho^{-1}(\nabla \Sigma + \pi \nabla \chi)\right) &= \alpha( \mathrm{tr}((\nabla \bu)^2) - \mathrm{Ric}[\bu,\bu]) 
\\
\chi - \beta \mathrm{div}_{\pi}\left(\rho^{-1}(\nabla \Sigma + \pi \nabla \chi)\right) &=  \beta \big((\nabla^2\log \pi + \mathrm{Ric})[\bu,\bu] - \mathrm{tr}((\nabla \bu)^2) + \frac{1}{2}\mathrm{div}^2_\pi(\bu)\big)\,.
\end{aligned}
\end{equation}
\end{thm}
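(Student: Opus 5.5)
The plan is to repeat the Euler--Poincar\'e argument of Theorem~\ref{thm:LC_connection_Hessian}, now with the extended metric \eqref{diverence_weighted_metric_entropy}. The reduced Lagrangian is
\begin{equation*}
\ell(\bu,\rho,\pi) = \frac{1}{2}\int_M \Big(\rho|\bu|^2 + \alpha\rho(\nabla\cdot\bu)^2 + \beta\,\pi\,\mathrm{div}_\pi(\bu)^2\Big)\mu ,
\end{equation*}
using $\pi^{-1}(\nabla\cdot(\pi\bu))^2 = \pi\,\mathrm{div}_\pi(\bu)^2$.

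\textbf{Step 1: variations.} Variations of $\varphi$ with fixed endpoints induce $\delta\bu = \dot\bv + [\bu,\bv]$ and $\delta\varrho = -\mathcal{L}_\bv\varrho$, and in addition $\delta\pi = -\mathcal{L}_\bv\pi$, since $\pi$ is pushed forward by $\varphi$. Hamilton's principle then gives the advected-parameter Euler--Poincar\'e equation
\begin{equation*}
(\partial_t + \mathcal{L}_\bu)\frac{\delta\ell}{\delta\bu} = \varrho\diamond\frac{\delta\ell}{\delta\varrho} + \pi\diamond\frac{\delta\ell}{\delta\pi}.
\end{equation*}
The new variational derivatives are
\begin{equation*}
\frac{\delta\ell}{\delta\bu} \ni -\beta\,\pi\, d\big(\mathrm{div}_\pi\bu\big), \qquad \frac{\delta\ell}{\delta\pi} = \beta\Big(\bu\cdot\nabla\,\mathrm{div}_\pi\bu - \tfrac12\mathrm{div}_\pi(\bu)^2\Big)
\end{equation*}
up to sign conventions. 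To obtain these, expand $\mathrm{div}_\pi\bu = \nabla\cdot\bu + \bu\cdot\nabla\log\pi$ and note that $\pi\,\mathrm{div}_\pi(\bu)^2$ depends on $\pi$ both through the prefactor and through $\nabla\log\pi$. The $\alpha$ terms are handled exactly as in Theorem~\ref{thm:LC_connection_Hessian} and again produce $\alpha\nabla\big(\rho\,\nabla\cdot\tfrac{D\bu}{Dt}\big)$ and the $\mathrm{tr}((\nabla\bu)^2)-\mathrm{Ric}$ source.

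\textbf{Step 2: the $\beta$ terms.} Simplifying them is the main obstacle. I would first use the $\pi$-continuity equation to commute $(\partial_t+\mathcal{L}_\bu)$ through $\pi\,d(\cdot)$. This should give
\begin{equation*}
(\partial_t+\mathcal{L}_\bu)\big[\pi\, d\,\mathrm{div}_\pi\bu\otimes\mu\big] = \pi\, d\big[(\partial_t+\bu\cdot\nabla)\mathrm{div}_\pi\bu\big]\otimes\mu ,
\end{equation*}
with the density factor $\pi(\nabla\cdot\bu)$ cancelling as in the (II)+(III) computation. Next I need a commutator identity playing the role of the Ricci identity:
\begin{equation*}
\frac{D}{Dt}\,\mathrm{div}_\pi\bu - \mathrm{div}_\pi\frac{D\bu}{Dt} = -\mathrm{tr}((\nabla\bu)^2) + \mathrm{Ric}(\bu,\bu) + \nabla^2\log\pi[\bu,\bu] + \cdots .
\end{equation*}
I would derive it in coordinates, using $\tfrac{D}{Dt}\log\pi = -\nabla\cdot\bu$. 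The $\nabla^2\log\pi[\bu,\bu]$ term arises from $\bu\cdot\nabla(\bu\cdot\nabla\log\pi)$, and the leftover squares combine, together with the $\pi\diamond\delta\ell/\delta\pi$ term, into the $\tfrac12\mathrm{div}_\pi^2(\bu)$ source. Tracking the exact coefficients and signs here, so that the $\tfrac12$ appears, is where I expect most of the work to lie.

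\textbf{Step 3: decomposition and elliptic system.} After Step 2 the momentum equation reads
\begin{equation*}
\rho\frac{D\bu}{Dt} - \alpha\nabla\Big(\rho\,\nabla\cdot\frac{D\bu}{Dt}\Big) - \beta\,\pi\nabla\Big(\mathrm{div}_\pi\frac{D\bu}{Dt}\Big) = \nabla(\cdots) + \pi\nabla(\cdots).
\end{equation*}
Pairing with $\bv\in\mathfrak{X}_{\mu,\pi}(M)$ kills every gradient term and every term of the form $\pi\nabla(\cdot)$, so $\tfrac{D\bu}{Dt}$ is $\rho$-orthogonal to $\mathfrak{X}_{\mu,\pi}(M)$. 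Moreover, $d\big(\rho\tfrac{D\bu}{Dt}\big)^\flat$ equals $d\pi\wedge d(\cdot)$, because $d$ annihilates the gradient terms. Lemma~\ref{lem:thermo_Helmholtz} therefore applies and gives $\tfrac{D\bu}{Dt} = \rho^{-1}(\nabla\Sigma+\pi\nabla\chi)$.

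Substituting this form back, I match the pure-gradient parts and the $\pi\nabla(\cdot)$ parts separately, which the gauge freedom permits. This yields $\nabla\big[\Sigma - \alpha\rho\,\nabla\cdot(\rho^{-1}(\nabla\Sigma+\pi\nabla\chi)) - \alpha\rho(\cdots)\big]=0$ and $\pi\nabla\big[\chi - \beta\,\mathrm{div}_\pi(\rho^{-1}(\nabla\Sigma+\pi\nabla\chi)) - \beta(\cdots)\big]=0$. Dividing by $\rho$ and integrating up to constants gives the two elliptic equations in \eqref{presureless_IGR_entropic}. The continuity equations for $\rho$ and $\pi$ follow directly from $\rho\mu=\varphi_*\mu$ and $\pi=\varphi_*\pi_0$.
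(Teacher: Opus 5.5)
Your outline follows the paper's own route: Euler--Poincar\'e reduction with $\pi$ as a second advected density, then Lemma~\ref{lem:thermo_Helmholtz}. However, the step you defer in Step~2 is a genuine gap, and careful bookkeeping will not close it.

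First, with $\sigma=\mathrm{div}_\pi(\bu)$ one has $\delta\ell/\delta\pi=-\beta\big(\bu\cdot\nabla\sigma+\tfrac12\sigma^2\big)$. The relative sign of the two terms is not a convention. Combined with your (correct) transport identity, the $\beta$-part of the Euler--Poincar\'e equation is $\beta\pi\,d\big(\partial_t\sigma-\tfrac12\sigma^2\big)$, because $\pi\,d(\delta\ell/\delta\pi)$ removes the advective part of $D\sigma/Dt$.

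Second, the commutator identity you intend to prove does not hold. Since $\partial_t\log\pi=-\sigma$, one gets $\partial_t\sigma=\mathrm{div}_\pi(\partial_t\bu)-\bu\cdot\nabla\sigma$. Hence $\tfrac{D}{Dt}\mathrm{div}_\pi\bu-\mathrm{div}_\pi\tfrac{D\bu}{Dt}=-\mathrm{div}_\pi(\nabla_{\bu}\bu)$ exactly, which contains $\bu\cdot\nabla(\nabla\cdot\bu)$ and no Hessian of $\log\pi$. Combining the two points,
\[
\partial_t\sigma-\tfrac12\sigma^2=\mathrm{div}_\pi\Big(\frac{D\bu}{Dt}\Big)-\Big(\bu\cdot\nabla\sigma+\mathrm{div}_\pi(\nabla_{\bu}\bu)+\tfrac12\sigma^2\Big).
\]
So the source of the $\chi$ equation is $\beta\big(\bu\cdot\nabla\sigma+\mathrm{div}_\pi(\nabla_{\bu}\bu)+\tfrac12\sigma^2\big)$. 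This is a \emph{sum}, not the difference $\bu\cdot\nabla\sigma-\mathrm{div}_\pi(\nabla_{\bu}\bu)$ that produces the Bakry--\'Emery term in the statement. On a flat domain it equals $\beta\big(2\bu\cdot\nabla\sigma-\nabla^2\log\pi[\bu,\bu]+\mathrm{tr}((\nabla\bu)^2)+\tfrac12\sigma^2\big)$, which still contains second derivatives of $\bu$.

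You can confirm independently that the stated source is unreachable, because a Levi--Civita geodesic must conserve $E=\tfrac12G^\Psi_\varphi(\dot\varphi,\dot\varphi)$. Work on a flat periodic domain and write the momentum equation as $\rho\tfrac{D\bu}{Dt}=-(\nabla\Sigma+\pi\nabla\chi)$. This is the sign used in \eqref{eq:TIGRE} and in the paper's proof; the $+$ in the first line of the statement, which you copied, is inconsistent with the two elliptic equations. A short computation with the stated elliptic system then gives $\dot E=2\beta\int_M\pi\,\nabla_{\bu}\bu\cdot\nabla\sigma\,\mu$ for any $\alpha$. At a time where $\pi$ is spatially constant on the circle, this equals $-\beta\pi\int u_x^3\,dx$, which is nonzero for, e.g., $u=\sin x+\sin 2x$. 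With the corrected source one gets $\dot E=0$.

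The paper's proof reaches the stated form through exactly this slip. Its lines $(\mathrm{IV})+(\mathrm{V})=\beta\pi\,d(\partial_t\sigma-\tfrac12\sigma^2)$ and $\partial_t\sigma=-\bu\cdot\nabla\sigma+\pi^{-1}\nabla\cdot(\pi\partial_t\bu)$ are correct. But substituting $\partial_t\bu=\tfrac{D\bu}{Dt}-\nabla_{\bu}\bu$ yields $Q_\pi=\bu\cdot\nabla\sigma+\mathrm{div}_\pi(\nabla_{\bu}\bu)$, not $\bu\cdot\nabla\sigma-\mathrm{div}_\pi(\nabla_{\bu}\bu)$.

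Your Step~3 goes through unchanged with the corrected source: the orthogonality to $\mathfrak{X}_{\mu,\pi}(M)$, the closedness hypothesis of Lemma~\ref{lem:thermo_Helmholtz}, and the matching of the $\nabla(\cdot)$ and $\pi\nabla(\cdot)$ parts.
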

\begin{proof}
As in the proof of Theorem~\ref{thm:LC_connection_Hessian}, we derive these equations from the Euler--Poincar{\'e} equations of the reduced Lagrangian
\begin{equation}
\ell(\bu,\rho,\pi) = \frac{1}{2}\left.\left(\jmath^* D^2\Psi\right)\right\vert_{(\varphi, \varphi^*\mu, \varphi_* \pi_0)}\left[\dot\varphi, \dot\varphi\right] \,,
\end{equation}
which now contains an additional transported variable. The EP equations are given by
\begin{equation*}
(\partial_t + \mathcal{L}_{\bu}) \left(\frac{\delta \ell}{\delta \bu} \otimes \mu \right) = \rho d \frac{\delta \ell}{\delta \rho} \otimes \mu + \pi d\frac{\delta \ell}{\delta \pi}\otimes \mu \,.
\end{equation*}
Denoting $\sigma = \pi^{-1}\nabla \cdot( \pi \bu)$, we can write the variational derivatives as
\begin{equation*}
\frac{\delta \ell }{\delta \bu } = \rho \bu^{\flat} - \alpha d (\rho \nabla \cdot \bu) - \beta \pi d\sigma \,, \quad \frac{\delta \ell}{\delta \rho} = \frac{1}{2}|\bu|^2 + \frac{1}{2}\alpha(\nabla \cdot \bu)^2 \,, \quad \frac{\delta \ell}{\delta \pi} = -\beta \left(\bu \cdot \nabla \sigma + \frac{1}{2}\sigma ^2 \right)\,.
\end{equation*}
Rearranging again in terms of the $L^2$ kinetic and regularization terms, we can write
\begin{equation*}
\mathrm{(I)} = \underbrace{(\partial_t + \mathcal{L}_{\bu}) (\alpha d(\rho \nabla \cdot \bu) \otimes \mu)}_{\mathrm{(II)}}  +  \underbrace{\alpha \rho d \frac{1}{2}(\nabla \cdot \bu)^2}_{\mathrm{(III)}} +  \underbrace{(\partial_t + \mathcal{L}_{\bu}) (\beta \pi d\sigma\otimes \mu)}_{\mathrm{(IV)}} + \underbrace{\pi d \frac{\delta \ell}{\delta \pi} \otimes \mu}_{\mathrm{(V)}} \,,
\end{equation*}
where $(\mathrm I) - (\mathrm{II}) - (\mathrm{ III})$ combine as in the proof of Theorem~\ref{thm:LC_connection_Hessian}. The term $(\mathrm{IV})$ simplifies using the continuity equation for the entropy \eqref{pi_continuity} to give
\begin{equation*}
\begin{aligned}
(\mathrm{IV}) + (V) &= \beta (\partial_t + \mathcal{L}_{\bu})  d \sigma \otimes  \pi\mu + \pi d \frac{\delta \ell}{\delta \pi} \otimes \mu 
\\
&= \beta \pi d(\partial_t + \mathcal{L}_{\bu})   \sigma \otimes \mu - \beta\pi d \big(\bu \cdot \nabla \sigma + \frac{1}{2}\sigma ^2 \big) \otimes\mu
\\
&=  \beta \pi d\big(\partial_t\sigma + \bu \cdot \nabla \sigma - \bu \cdot \nabla \sigma - \frac{1}{2}\sigma^2\big) \otimes \mu
= \beta \pi d\big(\partial_t\sigma - \frac{1}{2}\sigma^2\big) \otimes \mu \,.
\end{aligned}
\end{equation*}
Using the continuity equation again we can simplify $\partial_t\sigma$ as
\begin{equation*}
\begin{aligned}
\partial_t \sigma &= -\pi^{-2} \partial_t \pi (\nabla \cdot (\pi \bu)) + \pi^{-1} \nabla \cdot (\partial_t \pi \bu) + \pi^{-1} \nabla \cdot (\pi \partial_t \bu)
\\
& = \sigma^2 - \pi^{-1} \nabla \cdot (\bu \nabla \cdot(\pi \bu)) + \pi^{-1} \nabla \cdot (\pi \partial_t \bu)
\\
& = \sigma^2 - \pi^{-1}\nabla \cdot (\pi \sigma \bu)  + \pi^{-1} \nabla \cdot (\pi \partial_t \bu)
\\
&=  -\bu \cdot \nabla \sigma + \pi^{-1} \nabla \cdot(\pi \partial_t \bu)\,.
\end{aligned}
\end{equation*}
Defining the operator
\begin{equation}
\mathcal{A}(\rho, \pi)[\bv] = \rho \bv - \alpha \nabla (\rho \nabla \cdot \bv) - \beta\pi \nabla(\pi^{-1} \nabla \cdot (\pi \bv))\,,
\end{equation}
we can combine these results to give
\begin{equation}
\label{the_formula}
\mathcal{A}(\rho,\pi) \left[\frac{D\bu}{Dt}\right] = -\alpha\nabla (\rho Q(\bu)) - \beta \pi\nabla \big(Q_\pi(\bu) + \frac{1}{2}\sigma^2\big)\,,
\end{equation}
where we have defined entropy weighted term
\begin{equation*}
Q_\pi(\bu) = \bu \cdot \nabla \sigma  - \pi^{-1} \nabla \cdot (\pi \nabla_{\bu} \bu) \,.
\end{equation*}
We can simplify this operation by noting that $\sigma$ is the divergence of $\bu$ with respect to the weighted volume form $\pi \mu$, meaning $\sigma = \mathrm{div}_\pi(\bu)$ where 
\begin{equation*}
\mathcal{L}_{\bu} (\pi \mu) = \mathrm{div}_\pi(\bu) \pi\mu \,.
\end{equation*}
It can be shown then that the tensor
\begin{equation*}
F_\pi(\bu) = \nabla \bu + \bu \otimes \nabla \log \pi
\end{equation*}
satisfies the relation $\mathrm{tr}(F_\pi(\bu)) = \mathrm{div}_\pi(\bu)$, allowing us to write
\begin{equation}
Q_\pi(\bu) = \bu \cdot \nabla \mathrm{div}_\pi(\bu) - \mathrm{div}_\pi (\nabla_{\bu} \bu) = \mathrm{tr}\left(\nabla_{\bu} F_\pi(\bu) - F_\pi(\nabla_{\bu} \bu) \right) \,,
\end{equation}
using the commutativity between the trace and the covariant derivative. We can then simplify
\begin{equation*}
\nabla_{\bu} F_\pi(\bu) - F_\pi(\nabla_{\bu} \bu) =  \nabla_{\bu} \nabla \bu+ \cancel{\nabla_{\bu} \bu \otimes \nabla \log \pi} + \bu \otimes \nabla_{\bu}(\nabla \log \pi) -  \nabla (\nabla_{\bu} \bu) - \cancel{\nabla_{\bu} \bu \otimes \nabla \log \pi} \,.
\end{equation*}
We then note that the definition of the Hessian $\nabla^2 f$ of the scalar function $\log \pi$ gives us directly that
\begin{equation*}
\mathrm{tr}\left(\bu \otimes \nabla_{\bu}\left(\nabla \log\pi\right)\right)  = \nabla^2 \log\pi (\bu,\bu) \,.
\end{equation*}
On a Riemannian manifold, the commutator of the covariant derivatives can be simplified usign the coordinate expression as
\begin{equation*}
\begin{aligned}
(\nabla_{\bu}\nabla \bu)^i_{\,j} - (\nabla(\nabla_{\bu} \bu))^i_{\,j}&= u^k \nabla_k \nabla_j u^i - u^k \nabla_j \nabla_k u^i - (\nabla_j u^k)(\nabla_k u^i)
\\
& = u^k(\nabla_k \nabla_j - \nabla_j \nabla_k) u^i - [(\nabla u)^2]^i_{\,j} = u^k R^i_{\,jkl} u^l - [(\nabla u)^2]^i_{\,j} \,.
\end{aligned}
\end{equation*}
Taking the trace then gives yields
\begin{equation}
Q_\pi(\bu) = \nabla^2 \log\pi(\bu,\bu) + \mathrm{Ric}(\bu,\bu) - \mathrm{tr}((\nabla \bu)^2) \,.
\end{equation}
Combining with \eqref{the_formula} we get that
\begin{equation}
\label{the_expression2}
\begin{aligned}
\mathcal{A}(\rho,\pi)\left[\frac{D\bu}{Dt}\right] &= \rho\frac{D\bu}{Dt} - \alpha \nabla \left(\rho \nabla \cdot \frac{D\bu}{Dt}\right) - \beta\pi \nabla\left(\pi^{-1} \nabla \cdot \left(\pi \frac{D\bu}{Dt}\right)\right)\,
\\
&= -\alpha \nabla (\rho(\mathrm{tr}((\nabla \bu)^2) - \mathrm{Ric}(\bu,\bu)))
\\
&- \beta \pi \nabla \big(\nabla^2 \log\pi(\bu,\bu) + \mathrm{Ric}(\bu,\bu) - \mathrm{tr}((\nabla \bu)^2)+ \frac{1}{2}\mathrm{div}_\pi(\bu)^2\big)\,,
\end{aligned}
\end{equation}
and we can see that $\rho \frac{D\bu}{Dt}$ satisfies the conditions for Lemma~\ref{lem:thermo_Helmholtz}. Then writing 
\begin{equation*}
\rho \frac{D\bu}{Dt} = -\nabla \Sigma-\pi \nabla \chi
\end{equation*}
and substituting into the expression \eqref{the_expression2} gives the coupled elliptic system of equations in \eqref{presureless_IGR_entropic} and establishing the claim. 
\end{proof}

\begin{thm}
\label{thm:entropy_AC_thermo}
Given the metric \eqref{diverence_weighted_metric_entropy} and barrier function \eqref{entropic_barrier}, the tensor $\tilde{C}^{\sharp}$ defined by the relation \eqref{index_raised_pullback_tensor} satisfies
\begin{equation}
\label{AC_entropy_weighted}
\begin{aligned}
\tilde C^\sharp_{\dot\varphi} \dot\varphi &= \left(\rho^{-1} (\nabla \Sigma^{AC} + \pi \nabla \chi^{AC} )\right)\circ \varphi \,,
\\
\rho^{-1}\Sigma^{AC} - \alpha \nabla \cdot \left(\rho^{-1}(\nabla \Sigma^{AC} + \pi \nabla \chi^{AC})\right) &= 2\alpha (\nabla \cdot \bu)^2 \,,
\\
\chi^{AC} - \beta \pi^{-1} \nabla \cdot\left(\pi \rho^{-1}(\nabla \Sigma^{AC} + \pi \nabla \chi^{AC})\right) &=  \beta \left(\mathrm{div}_\pi(\bu)\right)^2 \,.
\end{aligned}
\end{equation}
\end{thm}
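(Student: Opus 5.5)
I would follow the proof of Theorem~\ref{thm:entropy_AC}, replacing the barotropic pullback metric with \eqref{diverence_weighted_metric_entropy} and the Helmholtz decomposition with Lemma~\ref{lem:thermo_Helmholtz}.

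\emph{Step 1: the ambient cubic tensor.} The transport term in \eqref{entropic_barrier} is quadratic, so its third derivative vanishes. The log-barrier contributes $-2\alpha\int_M \xi\eta\zeta\,\nu^{-3}\mu$. The KL term has second derivative $\beta\int_M \zeta_\mu\theta_\mu \pi^{-1}\mu$, and differentiating once more gives
\begin{equation*}
-\beta\int_M \frac{\zeta_\mu\theta_\mu\xi_\mu}{\pi^2}\mu ,
\end{equation*}
which is $\beta$ times the Fisher--Rao Amari--Chentsov tensor $C^{fr}$.

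\emph{Step 2: pull back along $\jmath$.} I would insert three copies of the differential \eqref{differential_embedding}, with the first two equal to $D\jmath(\dot\varphi)$ and the third equal to $D\jmath(V)$. The $\alpha$-part reduces exactly as in Theorem~\ref{thm:entropy_AC} to $-2\alpha\int_M\rho(\nabla\cdot\bu)^2(\nabla\cdot\bv)\mu$. For the entropy part, I would substitute $\zeta_\mu=\nabla\cdot(\pi\bu)=\pi\,\mathrm{div}_\pi(\bu)$. This gives
\begin{equation*}
-\beta\int_M \pi\,\mathrm{div}_\pi(\bu)^2\,\mathrm{div}_\pi(\bv)\,\mu .
\end{equation*}
This term is not under a pullback, so no change of variables is needed. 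The sign conventions are the place to be careful, because the $2\alpha$ and $\beta$ right-hand sides in \eqref{AC_entropy_weighted} appear with a plus sign. As in Theorem~\ref{thm:entropy_AC}, the minus sign from the cubic derivative should be absorbed into the definition of the potentials. The pushforward for $\pi$ and the pullback for $\nu$ carry opposite orientations, so I would also track whether the entropy contribution acquires a relative sign.

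\emph{Step 3: identify $\bw$.} The unknown $W=\bw\circ\varphi$ must satisfy
\begin{equation*}
\int_M \Big(\rho\,\bw\cdot\bv+\alpha\rho(\nabla\cdot\bw)(\nabla\cdot\bv)+\beta\pi\,\mathrm{div}_\pi(\bw)\,\mathrm{div}_\pi(\bv)\Big)\mu = \text{(RHS of Step 2)}
\end{equation*}
for all tangent $\bv$. Testing with $\bv\in\mathfrak{X}_{\mu,\pi}(M)$ kills every term except $\int_M\rho\,\bw\cdot\bv\,\mu$, so $\bw$ satisfies the orthogonality hypothesis of Lemma~\ref{lem:thermo_Helmholtz}.

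The main obstacle is the curl hypothesis $d(\rho\bw^\flat)=d\pi\wedge d\tilde\chi$. I would obtain it by integrating by parts against arbitrary tangent $\bv$. The divergence terms give $\nabla(\rho\nabla\cdot\bw)$ and $\pi\nabla(\mathrm{div}_\pi\bw)$, and the right-hand side gives $\nabla(\rho(\nabla\cdot\bu)^2)$ and $\pi\nabla(\mathrm{div}_\pi(\bu)^2)$. This yields the pointwise identity
\begin{equation*}
\rho\bw=\nabla f+\pi\nabla g
\end{equation*}
for explicit scalars $f,g$. Taking the exterior derivative gives $d(\rho\bw^\flat)=d\pi\wedge dg$, so the hypothesis holds with $\tilde\chi=g$. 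In fact this already produces the decomposition directly, mirroring equation \eqref{the_expression2}.

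\emph{Step 4: derive the elliptic system.} With $\rho\bw=\nabla\Sigma^{AC}+\pi\nabla\chi^{AC}$, after absorbing signs, I would substitute back into the identity $\rho\bw=\nabla f+\pi\nabla g$. I would then match the gradient parts and the $\pi\nabla(\cdot)$ parts separately, using the gauge freedom discussed after Lemma~\ref{lem:thermo_Helmholtz} to make this splitting legitimate up to the gauge. Matching the gradient parts gives
\begin{equation*}
\Sigma^{AC}-\alpha\rho\nabla\cdot\big(\rho^{-1}(\nabla\Sigma^{AC}+\pi\nabla\chi^{AC})\big)=2\alpha\rho(\nabla\cdot\bu)^2 ,
\end{equation*}
which becomes the stated $\Sigma^{AC}$ equation after dividing by $\rho$. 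Matching the $\pi\nabla$ parts gives the $\chi^{AC}$ equation, with $\beta\,\mathrm{div}_\pi(\bu)^2$ on the right-hand side. Both equations hold up to constants, as in Theorem~\ref{thm:entropy_AC}.
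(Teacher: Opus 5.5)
Your argument follows the paper's proof step for step: the ambient cubic form, composition with \eqref{differential_embedding}, the relation \eqref{the_relation}, integration by parts to reach \eqref{relation2}, and the decomposition $\rho\bw=\nabla\Sigma^{AC}+\pi\nabla\chi^{AC}$.

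Your observation that \eqref{relation2} already has the form $\rho\bw=\nabla f+\pi\nabla g$ is a small improvement. Here $f=\alpha\rho\,\nabla\cdot\bw+2\alpha\rho(\nabla\cdot\bu)^2$ and $g=\beta\,\mathrm{div}_\pi(\bw)+\beta\,\mathrm{div}_\pi(\bu)^2$. Taking $\Sigma^{AC}=f$ and $\chi^{AC}=g$ gives both elliptic equations at once. You then need neither Lemma~\ref{lem:thermo_Helmholtz} nor the gauge argument for matching the two parts separately, a step the paper's ``substituting'' also passes over.

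No sign should be absorbed here. The minus signs in front of $2\alpha$ and $\beta$ disappear when you integrate $\nabla\cdot\bv$ and $\nabla\cdot(\pi\bv)$ by parts, exactly as in Theorem~\ref{thm:entropy_AC}. The flip you are recalling comes from Theorem~\ref{thm:LC_connection_Hessian}, and applying it here would reverse the sign of $\tilde C^\sharp_{\dot\varphi}\dot\varphi$ relative to \eqref{AC_entropy_weighted}. Your Step 4 equation is in fact written without any absorption, so simply drop that remark.

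The relative sign you say you would track is real, and it is the one point your proposal, like the paper, leaves unsettled. Your Step 2 uses $\zeta_\mu=+\nabla\cdot(\pi\bu)$. That is what \eqref{differential_embedding} records, and it is exactly what produces $+\beta\,\mathrm{div}_\pi(\bu)^2$. However, for the pushforward, $\tfrac{d}{dt}\big|_{t=0}(\gamma_t)_*\pi=-\mathcal{L}_{\bu}(\pi\mu)=-\nabla\cdot(\pi\bu)\,\mu$. This agrees with $\partial_t\pi+\nabla\cdot(\pi\bu)=0$, whereas the $\nu$-slot carries $+(\nabla\cdot\bu)\circ\varphi\,J_\mu(\varphi)$.

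The metric is quadratic in the $\pi$-slot, so it is unaffected. The cubic form is odd in that slot, so its sign changes:
\begin{itemize}
\item the pullback becomes $-2\alpha\int_M\rho(\nabla\cdot\bu)^2(\nabla\cdot\bv)\,\mu+\beta\int_M\mathrm{div}_\pi(\bu)^2\,\nabla\cdot(\pi\bv)\,\mu$;
\item the last term of \eqref{relation2} becomes $-\beta\pi\nabla\,\mathrm{div}_\pi(\bu)^2$;
\item the $\chi^{AC}$ equation gets right-hand side $-\beta\,\mathrm{div}_\pi(\bu)^2$.
\end{itemize}
So your proof gives the statement exactly as the paper's does, that is, under the sign convention written in \eqref{differential_embedding}. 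If you actually carry out the check you announce, it reverses the sign of the $\beta$-term rather than confirming it.
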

\begin{proof}
Again we seek $W = \bw \circ \varphi \in T_\varphi\Df$ such that
\begin{equation}
\label{the_relation}
G^\Psi_\varphi(W,V) = \tilde C_\varphi(\dot\varphi, \dot\varphi,V) \,,
\end{equation}
for all $V = \bv \circ \varphi \in T_\varphi \Df$. In the ambient space, the Amari-Chentsov tensor is given by 
\begin{equation*}
C^\Psi_{(\varphi, \nu, \pi)}\left((U, \eta, \zeta), (V, \xi, \sigma), (W, \theta, \kappa)\right)= \left(0 , -2\alpha \int_M \frac{\eta \xi \theta}{\nu^3} \mu, -\beta \int_M \frac{\zeta \sigma \kappa}{\pi^2} \mu \right) \,.
\end{equation*}
Composing with the differential of the embedding \eqref{differential_embedding} and a change of variables gives
\begin{equation*}
C_\varphi(\dot\varphi,\dot\varphi, V) = -2\alpha \int_M \rho (\nabla \cdot \bu)^2 (\nabla \cdot \bv)\mu  - \beta \int_M (\mathrm{div}_\pi(\bu))^2 (\nabla \cdot (\pi \bv)) \mu
\end{equation*}
A direct computation based on the relation \eqref{the_relation} then gives
\begin{equation}
\begin{aligned}
\label{relation2}
\rho \bw  - \alpha \nabla (\rho (\nabla \cdot \bw)) - \beta \pi \nabla (\pi^{-1} \nabla \cdot (\pi \bw)) = 2\alpha \nabla (\rho (\nabla \cdot \bu)^2) + \beta \pi \nabla (\mathrm{div}_\pi(u))^2 \,.
\end{aligned}
\end{equation}
It follows that $\rho \bw$ satisfies the conditions for Lemma~\ref{lem:thermo_Helmholtz} and we can write $\rho \bw = \nabla \Sigma^{AC} + \pi \nabla \chi^{AC}$. Substituting into the relation \eqref{relation2} yields the system of equations \eqref{AC_entropy_weighted}, establishing the claim. 
\end{proof}
We can now combine these results to form the TIGRE equations, explicitly including the thermodynamic potential terms. 
\begin{cor}
The forced pullback dual geodesic equation defined by the barrier function \eqref{entropic_barrier_functional} as 
\begin{equation}
\label{dual_geodesic_TIGRE}
\tilde\nabla^{LC}_{\dot\varphi} \dot\varphi + \frac{1}{2} \tilde{C}^{\sharp}_{\dot\varphi}\dot\varphi = -\left(\nabla h(\rho,s) - T(\rho,s)\nabla s\right) \circ \varphi 
\end{equation}
is the Lagrangian form of the TIGRE system \eqref{eq:TIGRE}.
\end{cor}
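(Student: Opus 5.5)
The plan is to follow the proof of Corollary~\ref{cor:pressureless_IGR}, adding the thermodynamic forcing from the gradient lemma of Section~\ref{sec:geometric_formulation}.

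\emph{Step 1: Eulerian form of the left-hand side.} Compose \eqref{dual_geodesic_TIGRE} with $\varphi^{-1}$. By Theorem~\ref{thm:geodesics_TIGRE}, the Levi--Civita term becomes
\[
\partial_t\bu+\nabla_{\bu}\bu+\rho^{-1}(\nabla\Sigma^{LC}+\pi\nabla\chi^{LC}).
\]
Here the signs are fixed so that the potentials satisfy the elliptic equations of \eqref{presureless_IGR_entropic}: the theorem statement and its proof differ by an overall sign, and I take the convention $\rho\,D\bu/Dt=-\nabla\Sigma-\pi\nabla\chi$ from the proof. By Theorem~\ref{thm:entropy_AC_thermo}, the Amari--Chentsov term contributes $\tfrac12\rho^{-1}(\nabla\Sigma^{AC}+\pi\nabla\chi^{AC})$.

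\emph{Step 2: Right-hand side.} By the gradient lemma and the identity $\rho^{-1}\nabla p=\nabla h-\vartheta\nabla s$ from the proof of the Corollary for \eqref{full_compressible_euler}, the forcing is $-\rho^{-1}\nabla p$.

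\emph{Step 3: Combine the potentials.} Set $\Sigma=\Sigma^{LC}+\tfrac12\Sigma^{AC}$ and $\chi=\chi^{LC}+\tfrac12\chi^{AC}$. Both pairs solve the same linear coupled elliptic system; only the sources differ. This is the operator $\mathcal{A}(\rho,\pi)$ after the decomposition of Lemma~\ref{lem:thermo_Helmholtz}. By linearity, $(\Sigma,\chi)$ solves that system with summed sources.

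For the $\Sigma$ equation, the source is $\alpha(\mathrm{tr}((\nabla\bu)^2)-\mathrm{Ric}(\bu,\bu))+\alpha(\nabla\cdot\bu)^2$. On flat $M$ this is the IGR right-hand side.

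For the $\chi$ equation, the source is
\[
\beta\bigl((\nabla^2\log\pi+\mathrm{Ric})[\bu,\bu]-\mathrm{tr}((\nabla\bu)^2)+\tfrac12\mathrm{div}_\pi(\bu)^2\bigr)+\tfrac12\beta\,\mathrm{div}_\pi(\bu)^2 .
\]
The two half-squares add to $\mathrm{div}_\pi(\bu)^2$, and $\mathrm{Ric}=0$ when flat. This matches \eqref{eq:TIGRE}. Also note $\mathrm{div}_\pi(\bv)=\pi^{-1}\nabla\cdot(\pi\bv)$, so the operator forms in Theorems~\ref{thm:geodesics_TIGRE} and~\ref{thm:entropy_AC_thermo} coincide.

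The gauge freedom does not matter here, since only $\nabla\Sigma+\pi\nabla\chi$ enters the momentum equation. One subtlety remains: the decomposition is used on the sum. I would justify this by noting that $\rho\bw$ for the total satisfies the hypotheses of Lemma~\ref{lem:thermo_Helmholtz}, since these hypotheses are linear.

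\emph{Step 4: Conservative form.} The Eulerian equation is
\[
\rho\,\frac{D\bu}{Dt}=-\nabla(p+\Sigma)-\pi\nabla\chi .
\]
Multiply by $\rho$ and use mass continuity together with the identity $\nabla\cdot(\rho\bu\otimes\bu)=\rho\nabla_{\bu}\bu+\bu\nabla\cdot(\rho\bu)$. This gives the momentum equation of \eqref{eq:TIGRE}, with the nonconservative source $-\pi\nabla\chi$. The $\rho$ and $\pi$ continuity equations follow from the embedding constraints $\rho\mu=\varphi_*\mu$ and $\pi=\varphi_*\pi_0$.

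\emph{Main obstacle.} The hard part is the sign and factor bookkeeping in Step 3. The sign conventions of Theorem~\ref{thm:geodesics_TIGRE} must be made consistent so that the Levi--Civita and Amari--Chentsov contributions add rather than cancel. In particular, the $\tfrac12\mathrm{div}_\pi^2$ term from the Levi--Civita part must combine with half of $\beta\,\mathrm{div}_\pi^2$ to give exactly the coefficient in \eqref{eq:TIGRE}. I would check this first by recomputing the sign of the $\pi\,d(\delta\ell/\delta\pi)$ term.
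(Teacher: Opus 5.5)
Your proposal is correct and follows the same route as the paper's proof. You pass to the Eulerian frame using Theorems~\ref{thm:geodesics_TIGRE} and~\ref{thm:entropy_AC_thermo} together with the $L^2$ gradient of the pullback potential, combine the potentials as $\Sigma=\Sigma^{LC}+\tfrac12\Sigma^{AC}$ and $\chi=\chi^{LC}+\tfrac12\chi^{AC}$ through the shared elliptic operator, and rewrite the momentum equation in conservative form. Your sign convention $\rho\,D\bu/Dt=-\nabla\Sigma-\pi\nabla\chi$, taken from the proof of Theorem~\ref{thm:geodesics_TIGRE} rather than its statement, is the one the paper's corollary proof uses, and your source bookkeeping fills in the ``direct computation'' the paper leaves implicit.
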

\begin{proof}
Expressing the dual geodesic equation \eqref{dual_geodesic_TIGRE} in the Eulerian frame and using the pressureless form \eqref{presureless_IGR_entropic} and the expression for the $L^2$ gradient of the pullback potential energy \eqref{full_pullback_potential}, we get that
\begin{equation}
\frac{D\bu}{Dt} = - \rho^{-1}(\nabla h(\rho,s) - T(\rho,s) \nabla s) - \rho^{-1}(\nabla \Sigma + \pi \nabla \chi) \,,
\end{equation}
where we have identified
\begin{equation*}
\Sigma = \Sigma^{LC} + \frac{1}{2}\Sigma^{AC} \,, \qquad \chi = \chi^{LC} + \frac{1}{2} \chi^{AC}
\end{equation*}
are the combined potentials. The right-hand sides of the equations for the potentials follow from a direct computation combining the solutions determined by Theorems~\ref{thm:geodesics_TIGRE} and~\ref{thm:entropy_AC_thermo}, and the conservative form follows similarly as the IGR formulation. 
\end{proof}

\subsection{Properties of the Regularized Equations}
\label{sec:properties_regularized_equations}
The potentials $(\Sigma,\chi)$ defining the regularization play structurally distinct roles in the dynamics. While the volumetric regularization potential $\Sigma$ compensates for the compressive motions, the potential $\chi$ is sensitive to the thermodynamic structure of the flow, forcing only the non-isentropic motions. Here we elaborate on the conservative, thermodynamic, and dynamic consequences of these properties of the regularization in the TIGRE system \eqref{eq:TIGRE} in comparison with the IGR equations \eqref{full_compressible_euler_regularized}.\\

\noindent
\emph{Conservative Properties}
\vspace{0.1cm}

Along with the total mass of the system, the total momentum, energy, and entropy are conserved quantities of the full compressible Euler equations
\begin{equation}
\mathcal{P}(t) = \frac{d}{dt}\int_M \rho(t) \bu(t) \mu \,, \quad \mathcal{E}(t) = \int_M \rho(t)\left(\frac{1}{2}|\bu(t)|^2 + e(t)\right)\mu \,, \quad \mathcal{S}(t) = \int_M \pi(t) \,. 
\end{equation}

\begin{prop}
The IGR equations \eqref{full_compressible_euler_regularized} conserve total mass, momentum, and energy \eqref{total_energy_density}. The total entropy satisfies
\begin{equation}
\frac{d}{dt}\int_M \pi \mu = \int_M \bu \cdot \nabla \left(\vartheta^{-1}\Sigma\right) \mu\,.
\end{equation}

\end{prop}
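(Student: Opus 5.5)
Mass, momentum and energy follow directly from the divergence form of \eqref{full_compressible_euler_regularized}. I would integrate each of the first three equations over $M$ and apply the divergence theorem. The boundary fluxes vanish on a periodic domain, or under the tangency condition $\bu\cdot n\vert_{\partial M}=0$ for the mass and energy fluxes. For the momentum flux, the pressure-type term $(p+\Sigma)I$ would contribute a boundary integral, so momentum conservation should be understood for a periodic domain or for $M$ without boundary, and I would state this assumption explicitly. This gives $\frac{d}{dt}\int\rho\mu=0$, $\frac{d}{dt}\int\rho\bu\,\mu=0$, and $\frac{d}{dt}\int E\mu=0$. The elliptic equation for $\Sigma$ plays no role in this step.

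For the entropy, I would convert the IGR system to the variables $(\rho,\bu,s)$. First I would write the non-conservative momentum equation $\frac{D\bu}{Dt}=-\rho^{-1}\nabla(p+\Sigma)$, using mass conservation and the identity for $\nabla\cdot(\rho\bu\otimes\bu)$ as in the corollary following the gradient lemma. Next I would expand the energy equation with $E=\rho(e+\tfrac12|\bu|^2)$. The kinetic part, computed from the momentum equation, gives $\rho\frac{D}{Dt}\tfrac12|\bu|^2=-\bu\cdot\nabla(p+\Sigma)$. Subtracting it from the energy equation should leave the internal energy balance
\[
\rho\frac{De}{Dt} = -p\,\nabla\cdot\bu - \Sigma\,\nabla\cdot\bu .
\]
The Gibbs relation \eqref{first_law} gives $\frac{De}{Dt}=\vartheta\frac{Ds}{Dt}+p\frac{D\rho^{-1}}{Dt}$, and mass conservation gives $\rho\frac{D\rho^{-1}}{Dt}=\nabla\cdot\bu$. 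The $p$ terms then cancel, leaving $\rho\vartheta\frac{Ds}{Dt}=-\Sigma\nabla\cdot\bu$.

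With $\pi=\rho s$ and mass conservation, $\partial_t\pi+\nabla\cdot(\pi\bu)=\rho\frac{Ds}{Dt}=-\vartheta^{-1}\Sigma\,\nabla\cdot\bu$. Integrating over $M$, the flux term drops out by tangency. A final integration by parts, again with $\bu\cdot n=0$ on the boundary, moves the divergence off $\bu$:
\[
\frac{d}{dt}\int_M\pi\mu = -\int_M\vartheta^{-1}\Sigma\,\nabla\cdot\bu\,\mu = \int_M \bu\cdot\nabla(\vartheta^{-1}\Sigma)\,\mu .
\]

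The main obstacle is the bookkeeping in the energy-to-entropy conversion. I need to check that the term $\nabla\cdot(\Sigma\bu)$ in the energy flux splits exactly as $\bu\cdot\nabla\Sigma+\Sigma\nabla\cdot\bu$, with the first piece cancelling against the kinetic energy work so that only $\Sigma\nabla\cdot\bu$ survives. I also need to interpret $p(E,\rho)$ consistently with $p(\rho,s)$ through the relation $s=s(\rho,e)$, which is valid because $\vartheta>0$. Everything else is routine integration by parts.
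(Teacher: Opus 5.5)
Your proposal is correct and follows the same route as the paper. The conservation of mass, momentum and energy is read off from the flux form via the divergence theorem, and the entropy law comes from integrating the pointwise balance $\partial_t\pi+\nabla\cdot(\pi\bu)=-\vartheta^{-1}\Sigma\,\nabla\cdot\bu$, which you derive explicitly from the internal-energy balance and the Gibbs relation where the paper only cites Barham et al. Your caveat that momentum conservation needs a periodic domain or $\partial M=\emptyset$, because of the boundary term from $(p+\Sigma)I$, is correct and is left implicit in the paper.
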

\begin{proof}
The conservative properties follow as a consequence of the divergence theorem, which can be read directly from the conservative flux form the equations \eqref{full_compressible_euler_regularized}. The evolution of the entropy was noted in \cite{barham2025hamiltonian}, which is pointwise defined by 
\begin{equation}
\label{entropy_source_IGR}
\partial_t \pi + \nabla \cdot (\pi \bu) = - (\vartheta^{-1}\Sigma) \nabla \cdot \bu \,,
\end{equation}
using the thermodynamic relations \eqref{thermodynamic_quantities}.
\end{proof}

\begin{prop}
The TIGRE equations \eqref{eq:TIGRE} conserve total mass and entropy. The total momentum satisfies
\begin{equation}
\frac{d}{dt}\int_M \rho \bu dx =  -\int_M \pi \nabla \chi dx\,,
\end{equation}
and the total energy satisfies
\begin{equation}
\frac{d}{dt}\int_M \rho\bigg(\frac{1}{2}|\bu|^2 + e(\rho,s)\bigg) dx = -\int_M \bu \cdot ( \nabla\Sigma + \pi \nabla \chi) dx \,.
\end{equation}
\end{prop}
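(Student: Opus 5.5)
Mass and entropy conservation follow from the divergence theorem applied to the continuity equations for $\rho$ and $\pi$ in \eqref{eq:TIGRE}: integrating $\partial_t\rho + \nabla\cdot(\rho\bu)=0$ and $\partial_t\pi + \nabla\cdot(\pi\bu)=0$ over $M$, the boundary fluxes $\rho\,\bu\cdot n$ and $\pi\,\bu\cdot n$ vanish by the tangency condition $\bu\cdot n\vert_{\partial M}=0$. This already contrasts with the entropy source \eqref{entropy_source_IGR} of the IGR system: since $\pi$ is transported exactly, no entropy is produced.

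For the momentum, I will integrate the momentum equation of \eqref{eq:TIGRE} over $M$. The flux $\nabla\cdot(\rho\bu\otimes\bu + (p+\Sigma)I)$ integrates to boundary terms. The convective part vanishes by tangency. The pressure part $(p+\Sigma)n$ vanishes on a periodic domain, or on $\mathbb{R}^d$ with decay, which is the setting in which total momentum is the relevant quantity. The only remaining term is the non-divergence source $-\pi\nabla\chi$, and this gives the stated identity. I will remark that this term is not in flux form, because $\pi\nabla\chi = \nabla(\pi\chi) - \chi\nabla\pi$; hence momentum is generally not conserved unless $\nabla\pi$ and $\chi$ align, for instance in the isentropic gauge.

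For the energy, I will work from the non-conservative form obtained in the proof of the final corollary. There $\rho\frac{D\bu}{Dt} = -\nabla p - \nabla\Sigma - \pi\nabla\chi$, using $\rho^{-1}\nabla p = \nabla h - \vartheta\nabla s$. The steps are as follows.
\begin{enumerate}
\item Dot with $\bu$ and use mass conservation to get $\partial_t(\tfrac12\rho|\bu|^2) + \nabla\cdot(\tfrac12\rho|\bu|^2\bu) = -\bu\cdot\nabla p - \bu\cdot(\nabla\Sigma+\pi\nabla\chi)$.
\item For the internal energy, combine the Gibbs relation \eqref{first_law} with $\frac{Ds}{Dt}=0$, which follows from the two continuity equations since $s=\pi/\rho$, and with $\frac{D\rho}{Dt} = -\rho\nabla\cdot\bu$. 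This yields $\partial_t(\rho e) + \nabla\cdot(\rho e\bu) = -p\nabla\cdot\bu$.
\item Sum the two identities. The pressure terms combine into $-\nabla\cdot(p\bu)$, and integrating over $M$ with tangency removes every flux, leaving exactly $-\int_M \bu\cdot(\nabla\Sigma+\pi\nabla\chi)$.
\end{enumerate}

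The main obstacle is bookkeeping rather than depth. First, one must correctly identify $\frac{Ds}{Dt}=0$ from the $\pi$-equation, so that the Gibbs relation contributes no $\vartheta$-term. Second, one must be careful about which boundary terms vanish by tangency alone and which (the pressure term in the momentum balance) need the periodic or decay assumption. I would state that assumption explicitly.
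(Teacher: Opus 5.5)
Your proposal is correct and follows the same route as the paper. The paper's proof only asserts that the identities follow directly from the continuity equations, the momentum equation, and the thermodynamic relations, and your expansion supplies exactly the omitted details: the kinetic-energy balance from $\rho\,D\bu/Dt=-\nabla(p+\Sigma)-\pi\nabla\chi$, the identity $Ds/Dt=0$ from the two continuity equations, and the observation that the momentum balance needs periodicity or decay to remove $\oint_{\partial M}(p+\Sigma)\,n$, whereas the energy balance needs only tangency.

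One caution for step 2: you must use $\partial_\rho e = p/\rho^2$ from \eqref{thermodynamic_quantities}, i.e.\ $de=\vartheta\,ds-p\,d(\rho^{-1})$. With the sign of \eqref{first_law} as printed you would get $+p\,\nabla\cdot\bu$, and the pressure terms would not combine into $-\nabla\cdot(p\bu)$.
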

\begin{proof}
The conservative properties follow directly from the equations, and the evolution of the total momentum follows directly from the momentum equation. The total energy density evolution proof follows directly from the equations and thermodynamic relations. 
\end{proof}
Compressions of the flow becomes sources or sinks of entropy in the IGR equations \eqref{full_compressible_euler_regularized}. The conservation of energy exhibited by the equations \eqref{full_compressible_euler_regularized} can be seen as a constraint equation used to close the system rather than a structural property of the regularization itself. The TIGRE system similarly does not possess a conservation of the total energy density; however, it conserves the total entropy by construction.  Neither of the systems necessarily dissipates the total energy or entropy, however, and in our numerical experiments, we observe both increasing and decreasing of the total energy in the TIGRE system \eqref{eq:TIGRE} and the total entropy for the IGR system \eqref{full_compressible_euler_regularized}. This behaviour can be attributed to a sign-indefinite character of the regularization \cite{cao2023information, barham2025hamiltonian}; serving as a source of regularization energy in compressive regions or a sink in expansive regions. In both systems, the natural Hamiltonian defined by the kinetic and potential energies is not conserved and instead satisfies the evolution
\begin{equation}
\label{total_energy_evolution}
\begin{aligned}
\frac{d}{dt} \frac{1}{2}G^{\psi}_\varphi(\dot{\varphi},\dot\varphi) + U(\varphi) &= G^{\psi}_\varphi(\nabla^{LC}_{\dot\varphi}\dot\varphi, \dot\varphi) + G^{\psi}_\varphi(\nabla^{\psi} U(\varphi), \dot\varphi)
\\
& = G^{\psi}_\varphi(\nabla^*_{\dot{\varphi}}\dot\varphi + \nabla^{\psi}  U(\varphi), \dot\varphi)  - \frac{1}{2}G^{\psi}_\varphi(C^\sharp_{\dot\varphi}\dot\varphi,\dot\varphi) 
\\
& =  G^{\psi}_\varphi(\nabla^{\psi}U(\varphi) - \nabla^{G}U(\varphi), \dot\varphi)  - \frac{1}{2}C_\varphi(\dot\varphi,\dot\varphi,\dot\varphi)  \,,
\end{aligned}
\end{equation}
where we used the notation $\nabla^{\psi}$ to define the gradient with respect to the pullback metric and $\nabla^GU(\varphi)$ as the gradient with respect to the $L^2$ metric as in the Newton's equations \eqref{Newtons_eqs} and the fact that
\begin{equation}
\label{TIGRE_equations_Lagrangian_form}
\tilde\nabla^*_{\dot\varphi} \dot\varphi = -\nabla^G U(\varphi)
\end{equation}
in the last inequality for either system \eqref{full_compressible_euler_regularized} or \eqref{eq:TIGRE}. The cubic term arising due to the Amari--Chentsov tensor in \eqref{total_energy_evolution} serves as a source of regularization energy in compressive regions or a sink in expansive regions and contrasts the sign-definite of dissipative viscous regularizations. This sign-indefinite character of the information geometric regularization is fundamental to its formulation as an inviscid regularization and is a consequence of the use of the dual geodesic evolution.\\

\noindent
\emph{Acoustic Wave Propagation}
\vspace{0.1cm}

The use of the $L^2$ gradient in the forcing of the pullback dual geodesic equations \eqref{TIGRE_equations_Lagrangian_form} has an important structural consequence in preserving the linear acoustic wave propagation in the flow. The use of the gradient of the Hessian metric, which results in dynamics of the form
\begin{equation*}
\label{pullback_dual_geodesics_barotropic}
\tilde\nabla^*_{\dot\varphi} \dot\varphi = -\nabla^{\psi} U(\varphi)\,,
\end{equation*}
can be shown to introduce a non-local pressure into the momentum equation, which compromises the acoustic wave dispersion relations of the full compressible Euler equations. A similar property is noted in \cite{barham2025hamiltonian}, where it was also shown that the IGR equations conserve the acoustic wave propagation of the Euler equations. Using a similar line of reasoning, we can show that the TIGRE equations \eqref{eq:TIGRE} also preserve the acoustic wave propagation of the full compressible Euler equations. In particular, we have the following proposition. 

\begin{prop}
The linearized characteristics of the TIGRE equations \eqref{eq:TIGRE} about a constant background state support acoustic waves with dispersion relation $\tau = u_0 \cdot \xi \pm c_0|\xi|$, where $c_0$ is the isentropic speed of sound and $\xi$ is the wave vector. The regularization potentials $(\Sigma, \chi)$ do not contribute to the acoustic spectrum at linear order. 
\end{prop}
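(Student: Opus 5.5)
We linearize the TIGRE system \eqref{eq:TIGRE} about a constant state $(\rho_0,\bu_0,s_0)$, so that $\pi_0=\rho_0 s_0$ is constant, and we take $M$ flat (periodic or $\mathbb{R}^d$) so that $\mathrm{Ric}=0$. We write $\rho=\rho_0+\rho'$, $\bu=\bu_0+\bu'$, $\pi=\pi_0+\pi'$, $\Sigma=\Sigma_0+\Sigma'$, $\chi=\chi_0+\chi'$. The background potentials $\Sigma_0,\chi_0$ are determined by the elliptic equations with constant data. Since $\nabla\bu_0=0$, $\nabla\log\pi_0=0$ and $\mathrm{div}_{\pi_0}(\bu_0)=0$, every right-hand side vanishes and $\Sigma_0=\chi_0=0$ is a consistent background, up to the gauge freedom noted after Lemma~\ref{lem:thermo_Helmholtz}. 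The gauge does not affect $\nabla\Sigma+\pi\nabla\chi$, so this choice is harmless.

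The first key step is to show that the regularization potentials are quadratic in the perturbation. Each right-hand side of the $\Sigma$ and $\chi$ equations is a quadratic form in first derivatives of the state:
\begin{itemize}
\item the terms $\mathrm{div}(\bu)^2$, $\mathrm{tr}(\nabla\bu^2)$ and $\mathrm{div}_\pi(\bu)^2$ involve only $\nabla\bu'$ and $\nabla\log\pi'$, and are therefore $O(\epsilon^2)$;
\item the term $\nabla^2\log\pi[\bu,\bu]$ expands as $\nabla^2\log\pi'[\bu_0,\bu_0]+O(\epsilon^2)$.
\end{itemize}
The last item is the main obstacle, because this term is linear in $\pi'$ and does not vanish at first order. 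I would handle it as follows. Its contribution enters the momentum only through $\pi_0\nabla\chi'$. The linearized coupled operator on $(\Sigma',\chi')$ is the constant-coefficient elliptic operator $\mathcal{A}(\rho_0,\pi_0)$, with symbol $\rho_0^{-1}$ and the $\alpha$, $\beta$ blocks multiplied by $|\xi|^2$. Using the advective structure, I would argue that this forcing is a pure entropy-mode term: $\pi'$ satisfies $(\partial_t+\bu_0\cdot\nabla)\pi'=-\pi_0\nabla\cdot\bu'$ and only couples to the entropy wave $\tau=\bu_0\cdot\xi$. An alternative is to pass to the frame moving with $\bu_0$, which is justified by Galilean invariance of the linearized acoustic problem. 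In that frame $\bu_0=0$, the term is genuinely quadratic, and then $\Sigma'=\chi'=0$ at linear order. If the term does not disappear this cleanly, I would fall back to the symbol computation below and check that the extra entries do not alter the acoustic eigenvalues.

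With the regularization potentials removed at linear order, the system becomes the linearized Euler equations in $(\rho',\bu',s')$:
\begin{align*}
(\partial_t+\bu_0\cdot\nabla)\rho'+\rho_0\nabla\cdot\bu' &= 0,\\
\rho_0(\partial_t+\bu_0\cdot\nabla)\bu'+\nabla p' &= 0,\\
(\partial_t+\bu_0\cdot\nabla)s' &= 0,
\end{align*}
where $p'=c_0^2\rho'+\partial_sp\,s'$ and $c_0^2=\partial_\rho p(\rho_0,s_0)$. We insert the plane wave $e^{i(\xi\cdot x-\tau t)}$ and write $\omega=\tau-\bu_0\cdot\xi$. The $s'$ equation gives $\omega=0$ or $s'=0$. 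The $s'=0$ branch reduces to the barotropic $2\times2$ longitudinal block, which yields $\omega^2=c_0^2|\xi|^2$. The remaining modes are the entropy and vorticity modes with $\omega=0$. Hence the acoustic dispersion relation is $\tau=\bu_0\cdot\xi\pm c_0|\xi|$, with no contribution from $\alpha$ or $\beta$. This confirms that $(\Sigma,\chi)$ do not enter the acoustic spectrum at linear order, in parallel with the IGR argument of \cite{barham2025hamiltonian}.
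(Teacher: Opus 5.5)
You were right to flag $\nabla^2\log\pi[\bu,\bu]$. With $\pi=\pi_0+\pi'$ it equals $\pi_0^{-1}(\bu_0\cdot\nabla)^2\pi'+O(\epsilon^2)$, which is linear in the perturbation. The paper's proof dismisses this term by perturbing only the velocity inside the Hessian, writing $\nabla^2\log\pi_0[\bu_0,\bu_0]+2\epsilon\nabla^2\log\pi_0[\bu_0,\tilde u]$. It therefore drops exactly this piece and gives no way of removing it. Neither of your two ways of removing it works, so there is a genuine gap.

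\emph{The source is not confined to the entropy mode.} Your own equation $(\partial_t+\bu_0\cdot\nabla)\pi'=-\pi_0\nabla\cdot\bu'$ shows that $\pi'$ is driven by compressions. On the acoustic branch the two continuity equations give $\hat\pi=s_0\hat\rho\neq0$, with $s_0=\pi_0/\rho_0$.

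\emph{There is no Galilean invariance to invoke.} The change of frame $x\mapsto x-\bu_0t$ absorbs $\bu_0$ only where it enters through the material derivative $\partial_t+\bu_0\cdot\nabla$. Here $\bu_0$ sits in the purely spatial operator $(\bu_0\cdot\nabla)^2$ on the right-hand side of the $\chi$ equation, so the source survives in the comoving frame. The reason is that the $\beta$-part of the metric penalizes $\mathrm{div}_\pi(\bu)$, and $\mathrm{div}_\pi(\bu+\bu_0)=\mathrm{div}_\pi(\bu)+\bu_0\cdot\nabla\log\pi$ depends on the frame. Hence the TIGRE system is not boost invariant.

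If you carry out your fallback symbol computation, it goes against you. Let $\hat P=\hat\Sigma+\pi_0\hat\chi$, the only combination that enters the momentum equation. The linearized $\Sigma$ equation gives $\hat\Sigma=-\alpha|\xi|^2\hat P$. The $\chi$ equation of \eqref{eq:TIGRE} as written then yields
\[
\hat P=-\frac{\beta(\bu_0\cdot\xi)^2}{1+(\alpha+\beta s_0)|\xi|^2}\,\hat\pi .
\]
On the acoustic branch, where $\hat s=0$ and $\hat\pi=s_0\hat\rho$, this gives
\[
(\tau-\bu_0\cdot\xi)^2=|\xi|^2\Big(c_0^2-\frac{\beta s_0(\bu_0\cdot\xi)^2}{1+(\alpha+\beta s_0)|\xi|^2}\Big).
\]
So the extra entries do alter the acoustic eigenvalues whenever $\bu_0\cdot\xi\neq0$.

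The rest of your argument coincides with the paper's and is correct: vanishing background potentials, invertibility of the $2\times2$ elliptic symbol, $\hat s=0$ off the entropy mode, and reduction to the barotropic block. But it proves the stated dispersion relation only for $\bu_0=0$, or for wave vectors with $\xi\perp\bu_0$. For a general constant background, the step you identified as the main obstacle cannot be closed in the direction the statement requires.
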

\begin{proof}
We consider a constant background state $U_0 = (\rho_0, u_0, \pi_0)$ and a perturbation of the solution $U = U_0 + \epsilon \tilde{U}$ with a plane wave ansatz
\begin{equation*}
\tilde{U}(x,t) = \hat{U} e^{i\xi \cdot x - \tau t} \,, 
\end{equation*}
where $\hat{U} = (\hat\rho, \hat u,\hat\pi)$ are expressed in Fourier space. We first note that with the background state we get a steady state solution with $(\Sigma, \chi) = (0,0)$ because the right-hand sides of the equations are zero since $u_0$ and $\pi_0$ are constant and the zero solution is valid for both elliptic equations. Linearizing the elliptic subsystem about $(\Sigma, \chi) = \epsilon(\tilde{\Sigma}, \tilde\chi)$, we get the that the left-hand sides satisfy
\begin{equation*}
\begin{aligned}
\rho^{-1} \Sigma - \alpha \nabla \cdot(\rho^{-1}(\nabla \Sigma + \pi \nabla \chi)) &= \epsilon \rho_0^{-1}\tilde\Sigma - \alpha \epsilon\rho_0^{-1} \nabla \cdot\left( \nabla \tilde\Sigma + \pi_0 \nabla \tilde\chi\right) + \mathcal{O}(\epsilon^2) \,,
\\
\chi - \beta \pi^{-1}\nabla \cdot \left(\pi\rho^{-1}(\nabla \Sigma + \pi \nabla \chi)\right) &= \epsilon \tilde\chi - \epsilon \beta \rho^{-1}_0 \nabla \cdot \left( \nabla \tilde\Sigma + \pi_0 \nabla \tilde\chi \right) + \mathcal{O}(\epsilon^2)  \,.
\end{aligned}
\end{equation*}
Since the background velocity is spatially constant, the terms $\mathrm{div}(u)^2$ and $\mathrm{tr}((\nabla u)^2)$ are $\mathcal{O}(\epsilon^2)$. In the $\chi$ equation, we see that 
\begin{equation*}
\mathrm{div}_\pi(u_0 + \epsilon \tilde{u}) = \mathrm{div}(u_0 + \epsilon \tilde u) + \epsilon(u_0 + \epsilon\tilde{u}) \cdot \nabla \tilde\pi = \epsilon \mathrm{div}(\tilde u) + \epsilon u_0 \pi_0^{-1} \nabla \tilde\pi + \mathcal{O}(\epsilon^2) \,,
\end{equation*}
and hence its square is $\mathcal{O}(\epsilon^2)$. In the other term, we have that
\begin{equation*}
\nabla^{2}\log \pi [u_0 + \epsilon \tilde{u}, u_0 + \epsilon \tilde{u}] = \nabla^{2}\log \pi_0 [u_0, u_0] + 2 \epsilon \nabla^{2}\log \pi_0 [u_0, \tilde u ] + \mathcal{O}(\epsilon^2)
\end{equation*}
and the first terms vanish because $\nabla^2 \log(\pi_0) = 0$. Substituting into the system of equations \eqref{eq:TIGRE}, replacing $\partial_t \to -i \tau$ and $\nabla \to i \xi$, and introducing a Doppler-shifted frequency $\hat \tau = \tau - u_0 \cdot \xi$, the linearized equations become an algebraic system of the form 
\begin{equation}
\label{linearized_system}
-\hat\tau \rho_0 \hat u + c_0^2 \xi \hat \rho + \frac{\partial p}{\partial s} \xi \hat s + \xi \hat \Sigma + \pi_0 \xi \hat\chi = 0 \,, \quad 
- \hat \tau \hat \rho + \rho_0 \xi \cdot \hat u = 0 \,, \quad 
-\hat \tau \hat \pi + \pi_0 \xi \cdot \hat u = 0 \,,
\end{equation}
and the elliptic subsystem at $\mathcal{O}(\epsilon)$ becomes 
\begin{equation*}
\begin{pmatrix}
\rho_0^{-1}(1 + \alpha |\xi|^2 & \alpha \pi_0 \rho_0^{-1} |\xi|^2
\\
\beta \pi_0^{-1} \rho_0^{-1} |\xi|^2 & 1 + \beta \pi^{-1}_0 \rho_0^{-1} |\xi|^2 
\end{pmatrix}
\begin{pmatrix}
\hat \Sigma 
\\
\hat \chi
\end{pmatrix}
= 
\begin{pmatrix}
0
\\
0
\end{pmatrix}
\,.
\end{equation*}
The determinant of this matrix is given by
\begin{equation*}
\det \mathcal{L}(\xi) = \rho_0^{-1}\left( 1 + (\alpha + \beta \pi_0^{-1} \rho_0^{-1}) |\xi|^2\right) > 0\,,
\end{equation*}
and therefore $\mathcal{L}(\xi)$ is invertible for all $\xi$ allowing us to conclude the $\hat{\Sigma} =  \hat\chi = 0$, simplying the momentum equation relation. Note then that the continuity equation relations give us the relation $\hat \rho/\rho_0 = \hat\pi/\pi_0$ and since $\pi = \rho s$, linearizing we get $\hat\pi = s_0 \hat\rho + \rho_0 \hat s$ and therefore
\begin{equation*}
\frac{\hat\pi}{\pi_0} = \frac{\hat\rho}{\rho_0} + \frac{\rho_0 \hat s}{\pi_0} = \frac{\hat\rho}{\rho_0} + \frac{\hat s}{s_0} \implies \hat s = 0 \,.
\end{equation*}
The system \eqref{linearized_system} then reduces to
\begin{equation*}
\begin{aligned}
-\hat\tau \rho_0 \hat u + c_0^2 \xi \hat \rho  &= 0 \,,
\\
- \hat \tau \hat \rho + \rho_0 \xi \cdot \hat u & = 0 \,,
\end{aligned}
\end{equation*}
which implies that $\hat{u}$ is proportional to the wave vector $\xi$ and there is no equation forcing the transverse directions $\hat{u}_{\perp}$ to be vorticity modes. Letting $\hat{u}_{\parallel}$ be the longitudinal direction, the only nontrivial solution of the equations is given by $\hat{\tau} = \pm c_0 |\xi|$, and restoring the Doppler shift, we get $\tau = u_0 \cdot \xi \pm c_0 |\xi|$, establishing the claim.  
\end{proof}

% Moreover, it is sufficient to treat the case $u_0 = 0$ due to a Galilean invariance around constant backgrounds. To see this, consider a Galilean boost of the form $(x,t,u) \mapsto (x - u_0 t, t, u - u_0)$ where $u_0 \in \mathbb{R}^2$ is a constant vector field. The continuity equations then transforms as
% \begin{equation*}
% \partial_t \rho + \nabla \cdot (\rho u) = \partial_{t'}\rho + \nabla' \cdot (\rho u') + \underbrace{\nabla' \cdot (\rho u_0) - u_0 \nabla \rho'}_{= \rho \nabla' \cdot u_0 = 0} = \partial_{t'} \rho + \nabla' \cdot (\rho u') \,,
% \end{equation*}
% and similarly for the entropy equation. Similarly for the momentum equation we see that
% \begin{equation*}
% \begin{aligned}
% \partial_t(\rho u) + \nabla \cdot (\rho u \otimes u) &= (\partial_{t'} - u_0 \cdot \nabla')(\rho u' + \rho u_0) = \partial_{t'}(\rho u') - u_0 \cdot \nabla '(\rho' u') + u_0 \partial_{t'} \rho - u_0(u_0 \cdot \nabla') \rho
% \\
% & + \nabla' \cdot (\rho u' \otimes u') + u_0 \nabla' \cdot (\rho u_0) + (u_0 \cdot \nabla')(\rho u')  + (u_0 \cdot \nabla' \rho) u_0
% \\
% & = \partial_t(\rho u') + \nabla \cdot (\rho u' \otimes u') + u_0( \partial_{t'}\rho + \nabla' \cdot(\rho u') + u_0(u_0 \cdot \nabla' \rho - u_0 \cdot \nabla' \rho)
% \\
% & = \partial_t(\rho u') + \nabla \cdot (\rho u' \otimes u') = -\nabla'\cdot((p + \Sigma)I) - \pi \nabla' \chi \,.
% \end{aligned}
% \end{equation*}

\noindent
\emph{Potential Vorticity}
\vspace{0.1cm}

It is desirable that the regularization to mitigate shock formation still maintain the vortex-wave interactions described by the transport of the potential vorticity. In the non-barotropic setting in a three-dimensional domain, the Ertel potential vorticity $q = \rho^{-1} \omega \cdot \nabla s$, where $\bsym{\omega} = \nabla \times u$ is the vorticity vector field, is a material invariant of the full compressible Euler equations. This can be readily observed using a coordinate-free interpretation with differential forms, where we can write
\begin{equation}
\label{potential_vorticity}
 q = \rho^{-1} \star (\omega \wedge ds)\,, 
\end{equation}
with $\omega \wedge ds$ being the wedge product between the vorticity two-form $\omega = du^{\flat}$ and the exterior derivative of the specific entropy and $\star: \Omega^3(M) \to \Omega^0(M)$ is the Hodge star operation. Taking the exterior derivative of the evolution equation for the velocity one-form $u^{\flat}$ in the compressible Euler equations gives us
\begin{equation*}\
\partial_t u^{\flat} + \mathcal{L}_u u^{\flat} = -d\big(h - \frac{1}{2}|u|^2\big) + \vartheta ds \implies \partial_t \omega + \mathcal{L}_u \omega = d\vartheta \wedge ds\,,
\end{equation*}
where we have used the commutativity of the Lie derivative and exterior derivative. Using the fact that $\rho q = \star \omega \wedge ds \implies \rho q\mu = \omega \wedge ds$ and the continuity equation for $\rho$, we have
\begin{equation}
\label{material_invariance_PV}
\begin{aligned}
 \rho(\partial_tq + \mathcal{L}_u q)\mu &= q(\partial_t + \mathcal{L}_u)\rho \mu + \rho(\partial_tq + \mathcal{L}_u q)\mu 
\\
&=(\partial_t + \mathcal{L}_u) \rho q \mu 
\\
& = (\partial_t + \mathcal{L}_u)\omega \wedge ds  + \omega \wedge d (\partial_t + \mathcal{L}_u)s
\\
& = d \vartheta \wedge ds \wedge ds = 0 \,,
\end{aligned}
\end{equation}
which implies the advection of the potential vorticity \eqref{potential_vorticity}. 
\begin{prop}
The potential vorticity \eqref{potential_vorticity} in the TIGRE system \eqref{eq:TIGRE} is a material invariant.
\end{prop}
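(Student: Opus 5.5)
We mirror the computation \eqref{material_invariance_PV} for the compressible Euler equations. The only new ingredient is the regularization force in the TIGRE momentum equation, and the aim is to show that its curl wedged with $ds$ vanishes.

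\emph{Step 1: the velocity one-form equation.} Write the non-conservative form of \eqref{eq:TIGRE}, namely
\begin{equation*}
\frac{D\bu}{Dt} = -\rho^{-1}\nabla p - \rho^{-1}(\nabla \Sigma + \pi \nabla \chi) \,.
\end{equation*}
Lowering indices gives
\begin{equation*}
\partial_t u^\flat + \mathcal{L}_u u^\flat = -d\big(h - \tfrac12|u|^2\big) + \vartheta\, ds - \rho^{-1} d\Sigma - s\, d\chi \,,
\end{equation*}
where we used $\rho^{-1}\pi = s$, with $\pi = \rho s$ as in the statement of the corollary on the Euler equations.

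\emph{Step 2: the vorticity equation.} Apply $d$, using that $d$ commutes with $\partial_t + \mathcal{L}_u$. This gives
\begin{equation*}
(\partial_t + \mathcal{L}_u)\omega = d\vartheta\wedge ds + \rho^{-2} d\rho \wedge d\Sigma - ds\wedge d\chi \,.
\end{equation*}
The first and last terms are harmless, since each contains a factor $ds$ and so vanishes after wedging with $ds$. The middle, baroclinic-type term $\rho^{-2}d\rho\wedge d\Sigma$ is the crux.

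\emph{Step 3: material derivative of $\rho q\mu = \omega\wedge ds$.} Repeat \eqref{material_invariance_PV}. The continuity equation for $\rho$ handles the left side, and the advection $(\partial_t+\mathcal{L}_u)s = 0$ kills the second term on the right. This leaves
\begin{equation*}
\rho\,\frac{Dq}{Dt}\,\mu = \rho^{-2}\, d\rho\wedge d\Sigma\wedge ds \,.
\end{equation*}
Note that $s$ is still advected in TIGRE: the $\rho$ and $\pi$ continuity equations are unchanged, so $s = \pi/\rho$ satisfies $\partial_t s + u\cdot\nabla s = 0$.

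\emph{Main obstacle.} The whole argument rests on showing $d\rho\wedge d\Sigma\wedge ds = 0$, which is generally false for arbitrary $\Sigma$. My first attempt will be to exploit the gauge freedom discussed after Lemma~\ref{lem:thermo_Helmholtz}, together with the splitting $\nabla\Sigma + \pi\nabla\chi$. I will try to regroup the force as $\rho^{-1}\nabla\Sigma + s\nabla\chi$ in the form $d(\cdot) + (\cdot)\,ds$, for instance via $\rho^{-1}d\Sigma = d(\rho^{-1}\Sigma) + \rho^{-2}\Sigma\, d\rho$. The obstruction is exactly the $d\rho$ component.

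If that regrouping fails, I will instead adopt the definition of $q$ used for the barotropic IGR analogue, interpreting the force as an effective thermodynamic force. Concretely, I will check whether $\Sigma$ can be absorbed into the pressure via $p+\Sigma$ with $p$ a function of $(\rho,s)$. In that case $\rho^{-1}\nabla(p+\Sigma)$ contributes only $d(\cdot)$ and $ds$ terms.

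Failing both, I would state the result in the weaker form it can actually support. The non-barotropic part of the regularization, namely the $\pi\nabla\chi$ term, never affects $q$. The $\Sigma$ term acts exactly as an additional pressure, so $q$ is a material invariant precisely when $d\rho\wedge d\Sigma\wedge ds = 0$; in particular this holds in two dimensions, or when $\Sigma$ is a function of $(\rho,s)$. I expect this step to decide whether the proposition holds as stated.
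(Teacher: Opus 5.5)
Your Steps 1--3 are correct, and the term you isolate does not go away. They give
\[
\frac{Dq}{Dt}=\rho^{-3}\,\nabla s\cdot(\nabla\rho\times\nabla\Sigma),
\]
which is the usual baroclinic source in Ertel's theorem with $p$ replaced by $p+\Sigma$. So the proposal cannot be completed into a proof of the statement. This is not because you are missing an idea: the proposition does not hold in the stated generality. The paper's own proof does not overcome this term; it leaves it out. It writes $(\partial_t+\mathcal{L}_u)\omega=d\vartheta\wedge ds-d(\rho^{-1}\pi\,d\chi)=(d\vartheta+d\chi)\wedge ds$, keeping only the $\chi$-force (harmless, as you note). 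It drops $-d(\rho^{-1}d\Sigma)=\rho^{-2}\,d\rho\wedge d\Sigma$, which comes from the $\Sigma I$ part of the momentum flux. The paper's companion formula for the IGR system \eqref{full_compressible_euler_regularized} misses the same term.

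Your first two rescue routes cannot work.
\begin{itemize}
\item \emph{Regrouping via the gauge.} The obstruction is gauge invariant: under $(\Sigma,\chi)\mapsto(\Sigma+H(\pi),\chi+F(\pi))$ it changes by $H'(\pi)\,d\rho\wedge(s\,d\rho+\rho\,ds)\wedge ds=0$. Indeed $\rho\,\frac{Dq}{Dt}\,\mu=-d\big(\rho^{-1}(d\Sigma+\pi\,d\chi)\big)\wedge ds$ depends only on the invariant combination, so no regrouping of $\rho^{-1}d\Sigma+s\,d\chi$ removes it.
\item \emph{Absorbing $\Sigma$ into the pressure.} $\Sigma$ cannot be absorbed into a pressure $P(\rho,s)$, because it is a nonlocal functional of $\nabla\bu$.
\end{itemize}

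Here is a concrete counterexample on the periodic box. At some instant take $\rho=1+\epsilon\sin 2\pi x$, $s=1+\epsilon\sin 2\pi y$ with $0<\epsilon<1$, and $\bu=(0,0,\sin 2\pi z)$. Then:
\begin{itemize}
\item $\omega=0$, so $q=0$;
\item the $\chi$-source in \eqref{eq:TIGRE} vanishes, since $\mathrm{div}_\pi(\bu)=\partial_z u^z$ and $\nabla^2\log\pi[\bu,\bu]=0$;
\item the $\Sigma$-source is $4\pi^2\alpha(1+\cos 4\pi z)$.
\end{itemize}
The elliptic system has $z$-independent coefficients, so you can project it onto the $e^{4\pi i z}$ mode, with coefficients $\hat\Sigma(x,y),\hat\chi(x,y)$. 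Suppose $\hat\Sigma=0$. Testing the $\chi$-equation against $\pi\bar{\hat\chi}$ then forces $\hat\chi=0$, and the $\Sigma$-equation reads $0=2\pi^2\alpha$, a contradiction. Hence $\partial_z\Sigma\not\equiv0$. Since $\frac{Dq}{Dt}=-\rho^{-3}\rho'(x)s'(y)\,\partial_z\Sigma$ and $\rho's'$ vanishes only on four planes, $Dq/Dt$ is nonzero somewhere.

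Your third option is therefore the right endpoint. The $\chi$-part of the force never affects $q$, and $q$ is a material invariant if and only if $d\rho\wedge d\Sigma\wedge ds\equiv0$ along the solution. Drop the two-dimensional case from that statement, though: there $\omega\wedge ds$ is a $3$-form on a surface, so $q\equiv0$ and the claim is vacuous.
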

\begin{proof}
Writing the momentum equation in differential form and taking the exterior derivative, we see
\begin{equation*}
\partial_t \omega + \mathcal{L}_u \omega = d\vartheta \wedge ds - d(\rho^{-1}\pi d\chi) = (d \vartheta + d\chi) \wedge ds \,,
\end{equation*}
using the anti-symmetry of the wedge product. The conservation of the potential vorticity then follows using the conservation of entropy and a similar manipulation as equation \eqref{material_invariance_PV}.
\end{proof}

\begin{prop}
The potential vorticity \eqref{potential_vorticity} in thermodynamic IGR equations \eqref{full_compressible_euler_regularized} satisfies
\begin{equation*}
\partial_t q + u \cdot \nabla q = - \frac{1}{\rho} (\nabla \times u) \cdot \nabla \left(\frac{\Sigma}{\rho \vartheta}(\nabla \cdot u)\right)\,.
\end{equation*}
\end{prop}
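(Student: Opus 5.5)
We mirror the computation \eqref{material_invariance_PV} for the Euler equations, tracking the single extra term that the IGR pressure $\Sigma$ introduces. The plan has three steps.

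\emph{Step 1: the velocity one-form equation.} Divide the conservative momentum equation in \eqref{full_compressible_euler_regularized} by $\rho$ and use mass conservation. This gives the non-conservative form
\begin{equation*}
\partial_t u^\flat + \mathcal{L}_u u^\flat = -d\big(h - \tfrac12|u|^2\big) + \vartheta\, ds - \rho^{-1} d\Sigma .
\end{equation*}
Here we rewrite $\rho^{-1}\nabla p = \nabla h - \vartheta\nabla s$ exactly as in the Corollary following the $L^2$-gradient Lemma. Write $\rho^{-1}d\Sigma = d(\Sigma/\rho) + \Sigma\rho^{-2}d\rho$ and apply $d$. With $\omega = du^\flat$ and the commutation of $d$ with $\mathcal{L}_u$, this yields
\begin{equation*}
(\partial_t+\mathcal{L}_u)\omega = d\vartheta\wedge ds + \rho^{-2}\, d\rho\wedge d\Sigma .
\end{equation*}

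\emph{Step 2: the entropy equation.} In the IGR system $s$ is no longer advected. From \eqref{entropy_source_IGR} and mass conservation,
\begin{equation*}
(\partial_t+\mathcal{L}_u)s = -\frac{\Sigma}{\rho\vartheta}\,\nabla\cdot u .
\end{equation*}
To confirm this, rewrite the energy equation as an internal energy equation, $\rho\, De/Dt = -(p+\Sigma)\nabla\cdot u$, and combine it with the Gibbs relation \eqref{first_law}. Since $d$ commutes with $\mathcal{L}_u$, we get $(\partial_t+\mathcal{L}_u)ds = -d\big(\tfrac{\Sigma}{\rho\vartheta}\nabla\cdot u\big)$.

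\emph{Step 3: the chain of \eqref{material_invariance_PV}.} Using $\rho q\mu = \omega\wedge ds$ and $(\partial_t+\mathcal{L}_u)(\rho\mu)=0$,
\begin{equation*}
\rho\, \frac{Dq}{Dt}\,\mu = \big(d\vartheta\wedge ds + \rho^{-2}d\rho\wedge d\Sigma\big)\wedge ds - \omega\wedge d\Big(\frac{\Sigma}{\rho\vartheta}\nabla\cdot u\Big).
\end{equation*}
The term $d\vartheta\wedge ds\wedge ds$ vanishes. Applying $\star$ and dividing by $\rho$, the last term gives exactly the claimed source $-\rho^{-1}(\nabla\times u)\cdot\nabla\big(\tfrac{\Sigma}{\rho\vartheta}\nabla\cdot u\big)$. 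This uses $\star(\omega\wedge df) = \omega\cdot\nabla f$ in three dimensions, as in the definition of $q$.

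\emph{Main obstacle.} The remaining term $\rho^{-2}\star(d\rho\wedge d\Sigma\wedge ds)$ is the baroclinic contribution of $\Sigma$, and it is absent from the stated formula. The main risk is that this term does not vanish identically, since it is a triple product of $\nabla\rho$, $\nabla\Sigma$ and $\nabla s$.

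I would first double-check whether the intended momentum equation makes $\Sigma$ enter as a gradient potential in $u^\flat$, so that the term is absorbed. I would also check whether the statement implicitly assumes the convention in which this term is grouped with, or omitted alongside, the baroclinic $d\vartheta\wedge ds$ term.

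Failing that, I would record it explicitly as an additional source term. The derivation above produces it unambiguously, and the comparison with the TIGRE Proposition above does not depend on it: there $\Sigma$ enters only through the pair $\nabla\Sigma + \pi\nabla\chi$, and the entropy is exactly transported.
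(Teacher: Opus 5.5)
Your three steps follow the paper's route: vorticity two-form equation, the entropy source \eqref{entropy_source_IGR}, the Leibniz rule on $\omega\wedge ds$, then $\star$. Your computation is correct. The two arguments part ways in Step 1. The paper's proof reuses the Euler vorticity equation $(\partial_t+\mathcal{L}_u)\omega = d\vartheta\wedge ds$, so it never picks up the contribution $-d(\rho^{-1}d\Sigma)=\rho^{-2}d\rho\wedge d\Sigma$ coming from the $\Sigma I$ flux. That omission is the only reason the stated formula has no baroclinic $\Sigma$ term.

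The obstacle you flag is therefore real, and neither rescue you propose can work:
\begin{itemize}
\item $\rho^{-1}d\Sigma$ is not exact. Its exterior derivative is exactly the 2-form you computed, which is nonzero wherever $\nabla\rho$ and $\nabla\Sigma$ are not parallel.
\item No grouping convention absorbs the term. $d\vartheta\wedge ds\wedge ds$ vanishes by antisymmetry, whereas $d\rho\wedge d\Sigma\wedge ds=\big((\nabla\rho\times\nabla\Sigma)\cdot\nabla s\big)\mu$ is a genuine triple product.
\end{itemize}
$\Sigma$ is fixed by $(\rho,u)$ through the elliptic equation, while $s$ is independent data. So this triple product is nonzero for generic three-dimensional states. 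It is also of the same order $O(\alpha)$ as the retained term, so it cannot be dropped as higher order. What your argument actually proves is
\[
\partial_t q + u\cdot\nabla q = \frac{1}{\rho^{3}}(\nabla\rho\times\nabla\Sigma)\cdot\nabla s - \frac{1}{\rho}(\nabla\times u)\cdot\nabla\Big(\frac{\Sigma}{\rho\vartheta}(\nabla\cdot u)\Big).
\]
The proposition as stated therefore holds exactly where $(\nabla\rho\times\nabla\Sigma)\cdot\nabla s=0$. Your fallback of recording the extra source explicitly is the correct conclusion, not a last resort.

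Two smaller corrections.

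\textbf{TIGRE remark.} Your side remark on TIGRE is wrong. There $Du/Dt=-\rho^{-1}\nabla p-\rho^{-1}\nabla\Sigma-s\nabla\chi$. The piece $d(s\,d\chi)=ds\wedge d\chi$ does vanish against $ds$. But $\rho^{-1}d\Sigma$ produces the same $\rho^{-2}d\rho\wedge d\Sigma\wedge ds$, and the paper's TIGRE proof drops it in the same way. So material invariance of $q$ there also requires $(\nabla\rho\times\nabla\Sigma)\cdot\nabla s=0$.

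\textbf{Gibbs relation.} In Step 2, use the Gibbs relation in the form consistent with $p=\rho^2\partial_\rho e$, namely $de=\vartheta\,ds+p\rho^{-2}d\rho$. The displayed \eqref{first_law} has the sign of the pressure term reversed. Taken literally, it would give the source $-(2p+\Sigma)\nabla\cdot u/(\rho\vartheta)$ instead of your correct $-\Sigma\,\nabla\cdot u/(\rho\vartheta)$.
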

\begin{proof}
This property follow as a consequence of the loss of conservation of the entropy exhibited by the system \eqref{full_compressible_euler_regularized}. In particular, because $\pi$ satisfies \eqref{entropy_source_IGR}, we see that
\begin{equation*}
(\partial_t + \mathcal{L}_u)\omega \wedge ds = d\vartheta \wedge ds \wedge ds + \omega \wedge d(\partial_t + \mathcal{L}_u) s = -\omega \wedge d\left(\frac{\Sigma}{\vartheta \rho} (\nabla \cdot u)\right) \,.
\end{equation*}
The proof then follows by using $\rho q\mu = \omega \wedge ds$ and taking the Hodge star on both sides to give
\begin{equation*}
\star \rho(\partial_t q + \mathcal{L}_u q) \mu = \rho(\partial_t q + \mathcal{L}_u q) = - \star \omega \wedge d\left(\frac{\Sigma}{\vartheta \rho} (\nabla \cdot u)\right) = -(\nabla \times u) \cdot \nabla \left(\frac{\Sigma}{\rho \vartheta}(\nabla \cdot u)\right)\,,
\end{equation*}
which establishes the formula. 
\end{proof}
The dynamical invariance of the potential vorticity in the TIGRE equations is a corollary of the conservation of entropy and can be seen as an additional benefit to conserving the entropy when formulating the information geometric regularization. 

\section{Numerical Experiments}
\label{sec:numerical_experiments}
In this section, we describe a suite of numerical experiments that investigate and validate the properties of the TIGRE equations \eqref{eq:TIGRE} in comparison to the thermodynamic IGR equation \eqref{full_compressible_euler_regularized}. We use the same numerical method as \cite{cao2023information}, a Richtmyer Lax--Wendroff time-stepping scheme to compute the solution. The first experiment(\ref{sec:sod_experiments}) is a one-dimensional, smooth, and periodicized Sod shock tube initial condition as used in \cite{barham2025hamiltonian} (section 1.4) to demonstrate a spurious cusp-like singularity formation in the density and internal energy of the thermo-IGR equations after the collision of two wave fronts for the equations \eqref{full_compressible_euler_regularized}. The TIGRE equations are demonstrated to mitigate this cusp-singularity by constraining the dynamics to be thermodynamic length minimizing. The second experiment (\ref{sec:front_collisions}) illustrates the dynamical similarity of the two equations with a two-dimensional experiment of colliding pressure waves, demonstrating numerically how acoustic wave dispersion is preserved in both sets of equations. The third experiment (\ref{sec:KH_instability}) illustrates the interactions between pressure waves and vortex-dominated motion in a Kelvin--Helmholtz instability. We observe that the TIGRE regularization approach introduces less dispersive error at vortex filaments while still maintaining a similar dynamics in the velocity and pressure fields. These experiments all provide a strong indication of the efficacy of including a thermodynamic constraint into the information geometric regularization.\\

\noindent
\emph{Numerical Methods.} Our numerical experiments are performed using a Richtmyer Lax--Wendroff (LW) iteration of the form
\begin{equation}
\begin{aligned}
\bsym{q}^{n+1/2}_{i + 1/2} &= \frac{1}{2}(\bsym{q}^n_i + \bsym{q}^n_{i+1}) - \frac{\Delta t}{2\Delta x} \left(\bsym{F}(\bsym{q}^n_{i+1}) - \bsym{F}(\bsym{q}^n_i)\right) + \frac{\Delta t}{2}\left(\bsym{S}(\bsym{q}^n_i) + S(\bsym{q}^n_{i+1}) \right) \,,
\\
\bsym{q}^{n+1}_i &= \bsym{q}^n_i - \frac{\Delta t}{\Delta x}\left(\bsym{F}(\bsym{q}^{n+1/2}_{i + 1/2}) - \bsym{F}(\bsym{q}^{n+1/2}_{i -1/2})\right) + \Delta t \bsym{S}\left(\frac{1}{2}(\bsym{q}^{n+1/2}_{i+1} + \bsym{q}^{n+1/2}_{i-1})\right) \,,
\end{aligned}
\end{equation}
where we use the notation
\begin{equation*}
\bsym{q} = (\rho, \rho u, \pi)^{\top}, \quad \bsym{F}(q) = (\rho u, \rho u^2 + p(\rho,s) + \Sigma)^\top, \quad \bsym{S}(q) = (0, -\pi \nabla \chi, 0)^\top \,,
\end{equation*}
for the state vector, flux, and source term respectively. The domain is taken to be $[0,1)^2$ with periodic boundary conditions, and the solution is computed on a uniform mesh of size $\Delta x= 1/N_x$. The derivative in the source term is approximated using a centred finite difference, and the elliptic system for $(\Sigma^n, \chi^n)$ needed to evaluate the fluxes is constructed as follows. Letting $D_x u^n_{ij}$ and $D_y u^n_{ij}$ denote the centred finite-difference approximations of the velocity field in the coordinate directions, we can write the approximation of the right-hand side of the equations for $\Sigma$ using the expressions
\begin{equation*}
\begin{aligned}
[\mathrm{div}(u)^2]_{ij} &= (D_x u^x_{ij} + D_y u^y_{ij})^2 \,,
\\
[\mathrm{tr}((\nabla u)^2)]_{ij} &= (D_x u^x_{ij})^2 + 2 (D_x u^y_{ij})(D_y u^x_{ij}) + (D_y u^y_{ij})^2 \,.
\end{aligned}
\end{equation*}
The entropy-weighted divergence term is computed using a cell-averaging of the form 
\begin{equation*}
[\mathrm{div}_\pi(u)]_{i,j} = \frac{1}{\pi_{i,j}}\left(\frac{\bar\pi_{i+1/2,j} u^x_{i+1,j} - \bar\pi_{i-1/2,j} u^x_{i-1,j}}{\Delta x} + \frac{\bar\pi_{i,j+1/2} u^y_{i,j+1} - \bar\pi_{i,j-1/2} u^y_{i,j-1}}{\Delta y}\right) \,,
\end{equation*}
with the notation $\bar\pi_{i\pm1/2,j} = (\pi_{i,j} + \pi_{i \pm 1,j})/2$ and similarly for $\bar\pi_{i,j \pm 1/2}$. The Hessian of $\log \pi$ evaluating along the velocity field is approximated as
\begin{equation*}
\begin{aligned}
[\nabla^2 \log \pi[u,u]]_{i,j} &= (u_{i,j}^x)^2 \left(\log \pi_{i+1,j} - 2\log \pi_{ij} + \log \pi_{i-1,j}\right)/\Delta x^{2}
\\
&+ u^x_{i,j} u_{i,j}^y \left(\log \pi_{i+1, j+1} - \log \pi_{i-1,j+1} - \log \pi_{i+1,j-1} + \log \pi_{i-1,j-1}\right)/(2 \Delta x\Delta y)
\\
&+ (u_{i,j}^y)^2 \left(\log \pi_{i,j+1} - 2\log \pi_{ij} + \log \pi_{i,j-1}\right)/\Delta y^{2} \,.
\end{aligned}
\end{equation*}
We discretize the weighted divergence operators defining the system for $(\Sigma, \chi)$ using cell-centred finite-differences with face-averaged coefficients as
\begin{equation}
\label{weighted_divergence_discrete}
\begin{aligned}
\left[\nabla \cdot (g \nabla f)\right]_{ij} &= \frac{1}{2\Delta x^{2}}\left(\bar g_{i+1/2,j}(f_{i+1,j} - f_{i,j}) + \bar g_{i-1/2,j}(f_{i,j} - f_{i-1,j})  \right)
\\
&+ \frac{1}{2\Delta y^2}\left(\bar g_{i,j+1/2}(f_{i,j+1} - f_{ij}) + \bar g_{i,j-1/2}(f_{i,j} - f_{i,j-1})  \right)\,.
\end{aligned}
\end{equation}
The elliptic system for the potentials at each iteration can be expressed as the linear system
\begin{equation}
\begin{pmatrix}
D_\rho + \alpha L_{11} & \alpha L_{12}
\\
\beta L_{21} D_\pi & I + \beta L_{22}D_\pi
\end{pmatrix}
\begin{pmatrix}
    \Sigma^n 
    \\
    \chi^n
\end{pmatrix}
= \begin{pmatrix}
\alpha (\mathrm{div}(u^n)^2 + \mathrm{tr}((\nabla u^n)^2)) 
\\
\beta(\mathrm{div}^2_\pi(u^n) - \mathrm{tr}((\nabla u^n)^2) + \nabla^2 \log(\pi^n)[u^n,u^n]
\end{pmatrix}\,,
\end{equation}
where $D_\rho = \mathrm{diag}(\rho^{-1}_{ij})$ and $D_\pi = \mathrm{diag}(\pi^{-1}_{ij})$ are diagonal matrices of pointwise multiplications and we have used the notation $L_{kl}$ to denote the $N \times N$ symmetric matrix representing the weighted divergence operators $L_{kl} = - \nabla \cdot (g_{kl}\nabla)$ as defined by \eqref{weighted_divergence_discrete} with $g_{11} = \rho^{-1}$, $g_{12} = g_{21} = \pi \rho^{-1}$, and $g_{22} = \pi^2 \rho^{-1}$. Writing the matrix defining the linear system as $A$ and the right-hand side as $(f_1, f_2)$, we can split the system as $A = A_D + A_N$ where $A_D$ collects all centre node contributions and $A_N$ collects all the off-diagonal neighbour couplings the cells $(i,j)$. The splitting induces the following fixed-point iteration
\begin{equation}
\label{fixed_point_iteration}
A_D \begin{pmatrix}
    \tilde\Sigma^{k+1}
    \\
    \tilde\chi^{k+1}
\end{pmatrix}
= - A_N \begin{pmatrix}
    \tilde\Sigma^{k+1}
    \\
    \tilde\chi^{k+1}
\end{pmatrix} + \begin{pmatrix}
    f_1
    \\
    f_2
\end{pmatrix}\,,
\end{equation}
where $f_1, f_2$ corresponds to the right-hand side of the system and $(\tilde\Sigma^0, \tilde \chi^0) = (\Sigma^n, \chi^n)$ is the initialization. Because $A_D$ is block diagonal with $2\times 2$ blocks, we can compute each solution exactly via Cramer's rule on the blocks. We use a similar Gauss--Seidel fixed-point iteration similar to those in prior IGR work \cite{cao2023information, wilfong2025simulating} to solve the system. The method uses a red-black colouring of the mesh to improve the convergence rate. On a small $4\times 4$ grid this colouring looks like
\begin{center}
    \begin{tikzpicture}
  % Grid lines
  \foreach \x in {0,1,2,3,4} {
    \draw[gray!50] (\x, 0) -- (\x, 4);
  }
  \foreach \y in {0,1,2,3,4} {
    \draw[gray!50] (0, \y) -- (4, \y);
  }

  \foreach \i in {0,1,2,3} {
    \foreach \j in {0,1,2,3} {
      \pgfmathsetmacro{\s}{int(mod(\i+\j,2))}
      \pgfmathsetmacro{\cx}{\i+0.5}
      \pgfmathsetmacro{\cy}{\j+0.5}
      \ifnum\s=0
        % Red cell
        \fill[red!25]   (\i,\j) rectangle (\i+1,\j+1);
        \node[font=\small] at (\cx,\cy) {R};
      \else
        % Black cell
        \fill[black!25] (\i,\j) rectangle (\i+1,\j+1);
        \node[font=\small, white] at (\cx,\cy) {B};
      \fi
    }
  }
  \def\cx{1.5} \def\cy{1.5}

  \foreach \nx/\ny in {2.5/1.5, 0.5/1.5, 1.5/2.5, 1.5/0.5} {
    \draw[->, thick, blue!70] (\cx,\cy) -- (\nx,\ny);
  }

  \draw[red!80, very thick] (1,1) rectangle (2,2);

  % Axis labels
  \foreach \i in {0,1,2,3} {
    \node[below, font=\footnotesize] at (\i+0.5, 0) {$\i$};
    \node[left,  font=\footnotesize] at (0, \i+0.5) {$\i$};
  }
  \node[below] at (2, -0.4) {$i$};
  \node[left]  at (-0.4, 2) {$j$};
\end{tikzpicture}
\end{center}
At each iteration, the values of either the red or black cells are frozen, and the fixed-point iteration is performed consecutively between coloured cells. The regularization parameters scale as $\alpha \sim \beta \sim \Delta x^2$, yielding a well-conditioned system. The number of fixed-point iterations requiring convergence was approximately halved by using an extrapolation $(\tilde{\Sigma}^0, \tilde\chi^0) = (2\Sigma^n-\Sigma^{n-1}, 2\chi^n - \chi^{n-1})$ as the initial guess at each time step. In practice, we observed only a few fixed-point iterations were needed for convergence. 

The time stepping is performed adaptively using a CFL condition of the form
\begin{equation*}
\Delta t = C\frac{\Delta x}{\norm{|u| + \sqrt{\partial p/\partial \rho}}_{\infty}}
\end{equation*}
with a constant $C$ chosen based on the experimental setup. We will refer to the solution of the thermodynamic extension of the IGR equations \eqref{full_compressible_euler_regularized} using the LW scheme as LW-IGR and the application to our new TIGRE equations as LW-TIGRE.\\

\noindent
\emph{Metrics of Comparison.} As quantitative comparison of the two regularization approaches, we can verify the conservative properties of the numerical solutions with the following approximations of the total entropy, the total energy, and the momentum of the solution 
\begin{equation*}
\mathcal{S}^n = \sum_{i,j}  \pi_{i,j}^n \Delta x \Delta y\,, \quad \mathcal{E}^n = \sum_{i,j} \rho_{i,j}^n\bigg(\frac{1}{2}|u_{i,j}^n|^2 + \frac{p^{n}_{i,j}}{1- \gamma}\bigg) \Delta x \Delta y \,, \quad 
\mathcal{P}^n = \sum_{i,j} \rho_{i,j}^n u^n_{i,j} \Delta x \Delta y \,.
\end{equation*}
As a measure of the oscillatory behaviour in the solution, we consider the discrete total variation of the density measured as
\begin{equation*}
\text{TV}_h(\rho^n) = \sum_{x_i} \sqrt{(D_x \rho^n)^2 + (D_y \rho^n)^2} \Delta x \Delta y \,,
\end{equation*}
where $D_x \rho^n$  and $D_y \rho^n$ are centred finite-difference approximations in the coordinate directions. We also consider diagnostics for the acoustic wave propagation and the turbulent energy cascade behaviour based on spectra of the discrete Fourier transformed pressure and velocity fields. In particular, we computed radially averaged power spectra, defined by the square modulus of the coefficients over the shells $\Omega_{b} = \{\bsym{k} = (k_x,k_y): b \leq |\bsym{k}| \leq b+1\}$ as
\begin{equation*}
P(k_b,t_n) = \frac{1}{|\Omega_b|} \sum_{\bsym{k} \in \Omega_b} |\hat{p}^n(\bsym{k})|^2 \,, \quad E(k_b, t_n) = \frac{1}{2|\Omega_b|} \sum_{\bsym{k} \in \Omega_b} |\hat{\bsym{u}}^n(\bsym{k})|^2 \,.
\end{equation*}
These spectral diagnostics are used to assess the effects of the regularization on the acoustic wave propagation and turbulent energy cascades respectively.

\subsection{Smoothed Sod Shock Tube}
\label{sec:sod_experiments}
In this experiment, we consider a smoothed form of the Sod shock tube test as described in \cite{barham2025hamiltonian}. The TIGRE system \eqref{eq:TIGRE} in one dimension reduces to
\begin{equation}
\begin{aligned}
\partial_t (\rho u ) + \partial_x \left( \rho u^2 + p(\rho,s) + \Sigma)\right) & = - \pi \partial_x \chi\,,
\\
\partial_t \rho + \partial_x (\rho u) & = 0\,,
\\
\partial_t \pi + \partial_x (\pi u) &= 0 \,,
\\
\rho^{-1}\Sigma - \alpha \partial_x \left(\rho^{-1}(\partial_x\Sigma + \pi \partial_x \chi)\right) &= 2 \alpha (\partial_x u)^2\,,
\\
\chi - \beta \pi^{-1} \partial_x \left(\pi \rho^{-1}(\partial_x \Sigma + \pi \partial_x \chi)\right) &=  \beta\left((\pi^{-1}\partial_x (\pi u))^2 - (\partial_x u)^2 + u^2 \partial_{xx} \log \pi \right) \,.
\end{aligned}
\end{equation}
As initial conditions, we use a smoothed, periodicized version of the Sod tube in the form 
\begin{equation*}
\begin{aligned}
\psi(x) &= \frac{1}{2} \left(\tanh((x - x_L)/\epsilon) - \tanh((x - x_R)/\epsilon) \right)\,,
\\
\rho_0(x) &= \rho_L - (\rho_L - \rho_R)\psi(x) \,, \quad p_0(x)  = p_L - (p_L - p_R) \psi(x)\,, \quad u_0(x) = 0 \,.
\end{aligned}
\end{equation*}
The parameters of the experiment are taken to be $(x_L, x_R) = (0.25,0.75)$, $(\rho_L, \rho_R) = (1, 0.125)$, $(p_L, p_R) = (1, 0.1)$, and $\epsilon = 0.03$. The initial total energy density is determined by the relation \eqref{total_energy_ideal_gas} for the LW-IGR simulation, and the ideal gas law \eqref{ideal_gas_law} is used to determine the initial entropy. We used the constants $\kappa = 1$, $\gamma = 1.4$, and $c_v = 2.5$ for the LW-TIGRE simulation. As a baseline comparison, we consider the Lax--Friedrichs (LF) scheme defined by
\begin{equation*}
\bsym{q}^{n+1}_i = \frac{1}{2}(\bsym{q}^n_{i+1} + \bsym{q}^n_{i-1}) - \frac{\Delta t}{2\Delta x}\left(\bsym{F}(q^n_{i+1}) - \bsym{F}(q^n_{i-1})\right) \,,
\end{equation*}
applied to the compressible Euler equations. The LF scheme is prone to a diffusive error, whereas the LW scheme is prone to a dispersive error \cite{leveque1992numerical}. It was shown that the IGR regularization dampens the magnitude of the dispersive oscillations near sharp moving fronts \cite{cao2023information}, which we also observed in the LW-TIGRE scheme. The solutions for LW-IGR and LW-TIGRE are computed using $N_x = 500$ grid points and a final integration time of $t = 0.5$. The solution at three different snapshot in time is shown in Figure \ref{fig:sod_shock_solution}, where we compare the density, velocity, and pressure, which are all shared observable quantities from each method. The wave front speeds and pressure of the LW-IGR and LW-TIGRE equation have a slight discrepancy, whereas the LW-TIGRE solution does not develop a cusp singularity in the density compared to the LW-IGR solution. We illustrate how the cusp singularity remains a property of the solution, rather than a numerical artifact by considering a refinement up to a resolution of $N_x = 1024$ grid points in Figure~\ref{fig:sod_shock_solution}. In Figure~\ref{fig:diagnostics_sod}, we provide the metrics of comparison, where we observe machine precision accuracy of the momentum for both equations, conservation of total entropy for the LW-TIGRE, and conservation of total energy for the LW-IGR equation, which is consistent with the discussion of Section~\ref{sec:properties_regularized_equations}. This test was also performed by Barham et al.~\cite{barham2025hamiltonian}, where the cusp-singularity was not resolved, indicating that the incorporation of thermodynamics into the information geometric regularization offers a systematic resolution to this dynamical behaviour. 

\begin{figure}
    \centering
    \includegraphics[width=\linewidth]{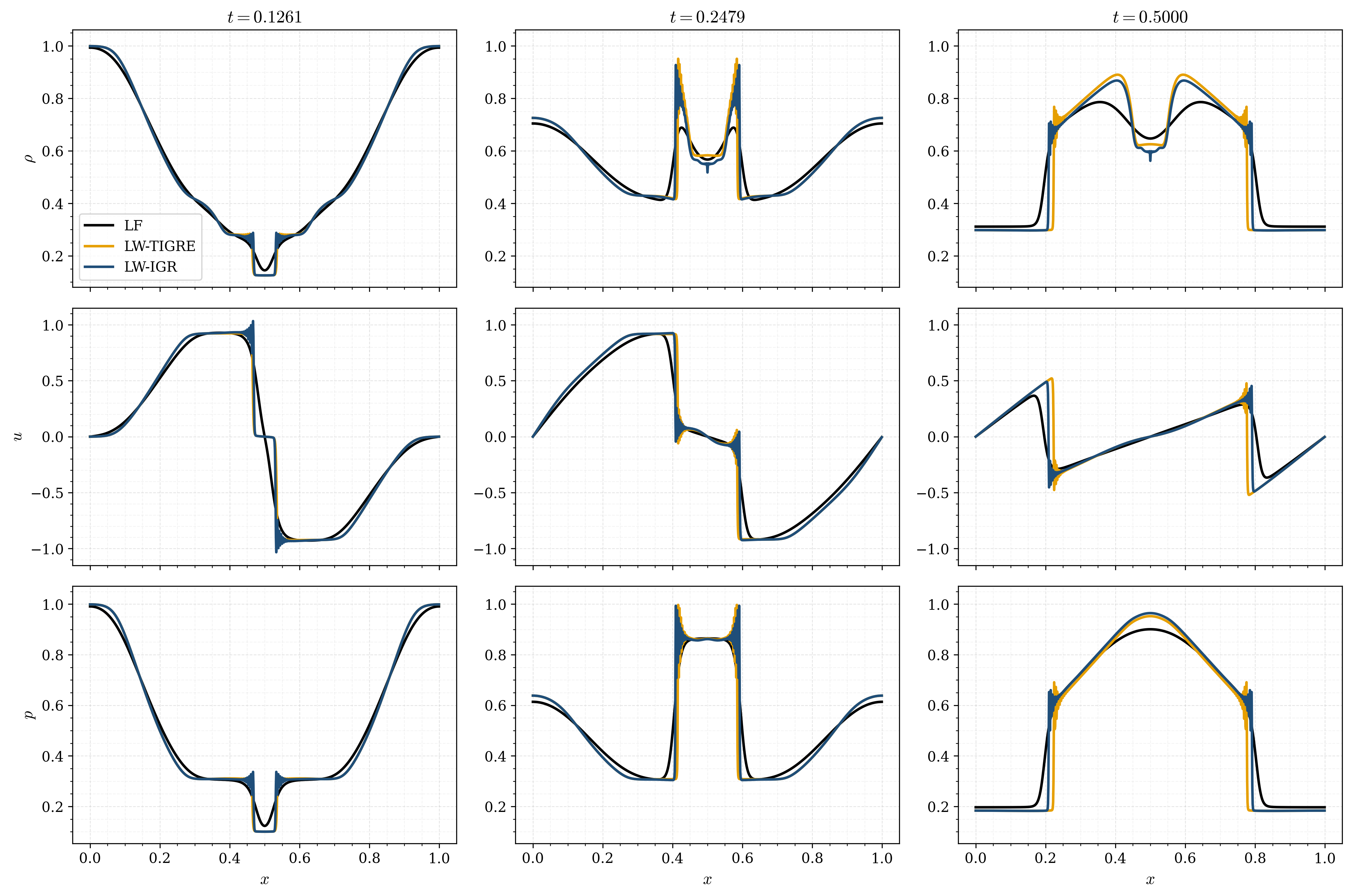}
    \includegraphics[width = 8cm, height = 5cm]{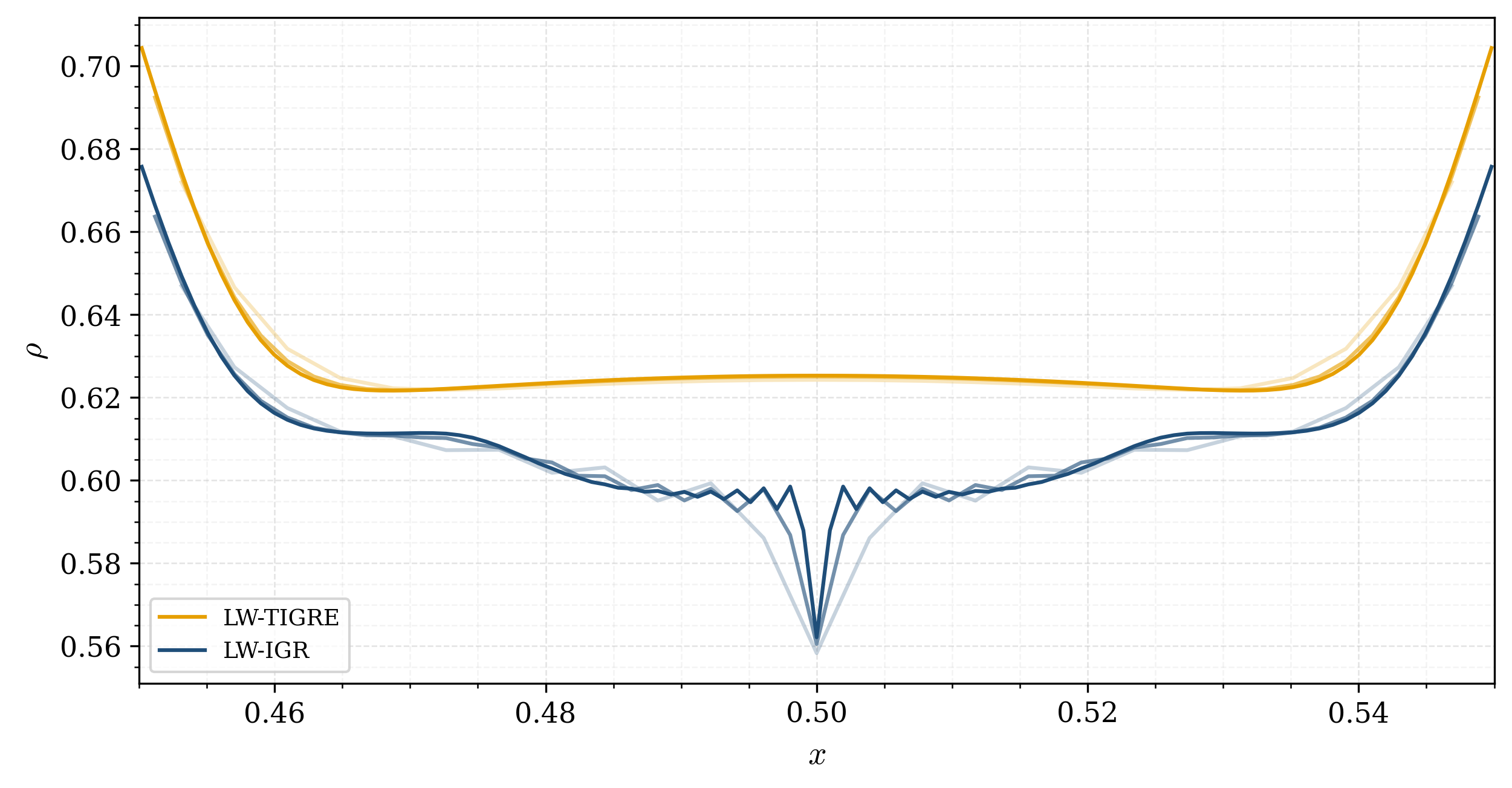}
    \caption{\emph{Top}: Smoothed Sod shock tube experiment. The solution $(\rho, u, p)$ at three different snapshots in time. The wave front speeds and pressure of the LW-IGR and LW-TIGRE equation have a slight discrepancy, whereas the LW-TIGRE solution does not develop a cusp singularity in the density compared to the LW-IGR solution. \emph{Bottom}: Zoom of the mass density at the final integration time, where we illustrate the permanence of the cusp-singularity under grid refinement.}
    \label{fig:sod_shock_solution}
\end{figure}

\begin{figure}
    \centering
    \includegraphics[width = \linewidth]{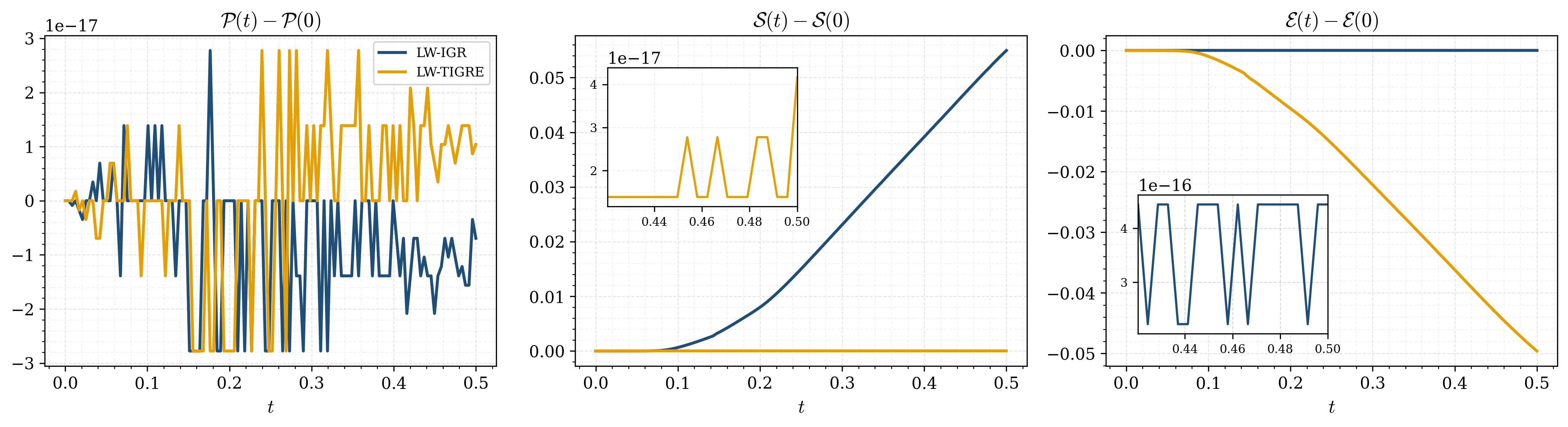}
    \caption{Conservation of the momentum (left), entropy (middle), and total energy (right) for the smoothed Sod shock tube experiment.}
    \label{fig:diagnostics_sod}
\end{figure}

\subsection{Acoustic Wave Propagation}
\label{sec:front_collisions}
In this numerical experiment, we consider the evolution of two localized pressure perturbations in a medium at rest in a two-dimensional periodic domain to investigate the behaviour of a complex acoustic wave propagation. We consider the background density to be constant and the initial velocity field is set to zero. As perturbation, we use two compactly supported profiles, one with radial symmetry and one with a variable profile, defined respectively as
\begin{equation*}
\begin{aligned}
b_1(x,y) &= \exp(-(1 - r_1^2)^{-1})\mathbb{I}_{r_1 < 1} \,, \qquad r_1^2 =  \frac{1}{\epsilon^2}\left((x- 0.3)^2 + (y - 0.4)^2\right)\,,
\\
b_2(x,y) &= \exp(-(1- q_2^2)^{-1})\mathbb{I}_{q_2 < 1} \,, \qquad q_2^2 = \frac{1}{\varepsilon(\theta)^2}\left((x - 0.7)^2 + (y - 0.6)^2\right)\,,
\end{aligned}
\end{equation*}
where $\epsilon = 0.01$ and $\varepsilon(\theta) = 0.1(1 + 0.18 \cos(4\theta))$ with $\theta = \arctan 2(y - 0.6, x - 0.7)$.  The asymmetry between these two fronts injects an anisotropic structure into the initial condition and angular modes into the spectrum.  The initial density and entropy are then defined by $\rho_0 = 1 + 0.1(b_1 + b_2)$ and $\pi_0 = 0.2 + 0.1(b_1 + b_2)$. The pressure perturbations associated with these initial conditions generate acoustic waves that transport the entropy density, simulating an asymmetric collisions between wavefronts. We consider $\Delta x = \Delta y = 1/512$ and compute the solution to $t = 1$, which we show in Figure~\ref{fig:collision_front}. The solutions are nearly identical, indicating how the acoustic wave propagation for each regularization is consistent. We can further validate this property quantitatively by assessing the pressure power spectrum diagnostics of the two solutions in Figure~\ref{fig:AW_spectral_diagnostics}. The regularization exhibited a similar dynamical behaviour with a small discrepancy at the highest wave numbers. As a coarse-grained comparison, we compute also the total spectral mass defined by the total power spectrum and the variance weighted mean wave number
\begin{equation*}
k_{rms}(t) = \sqrt{\frac{\sum_b k^2 P(k)}{\sum_b P(k)}} \,,
\end{equation*}
as further indication of the consistency between the acoustic wave propagation dynamics between each regularization. The total spectral mass is observed to oscillate over the simulation, while the $k_{rms}(t)$ trends towards higher wave numbers in both regularizations. The conservation diagnostics are shown in Figure~\ref{fig:AW_diagnostics}, where we again see an agreement with the properties discussed in Section~\ref{sec:properties_regularized_equations}, and the total variation of the density is also observed to remain consistent for each regularization.  We remark that the signs of the total entropy (LW-IGR) and total energy (LW-TIGRE) evolve differently here from their behaviour in the Sod shock tube test. This behaviour is due to the sign indefinite property of the information geometric correction present in both sets of equations.

\begin{figure}
    \centering
    \includegraphics[width=\linewidth]{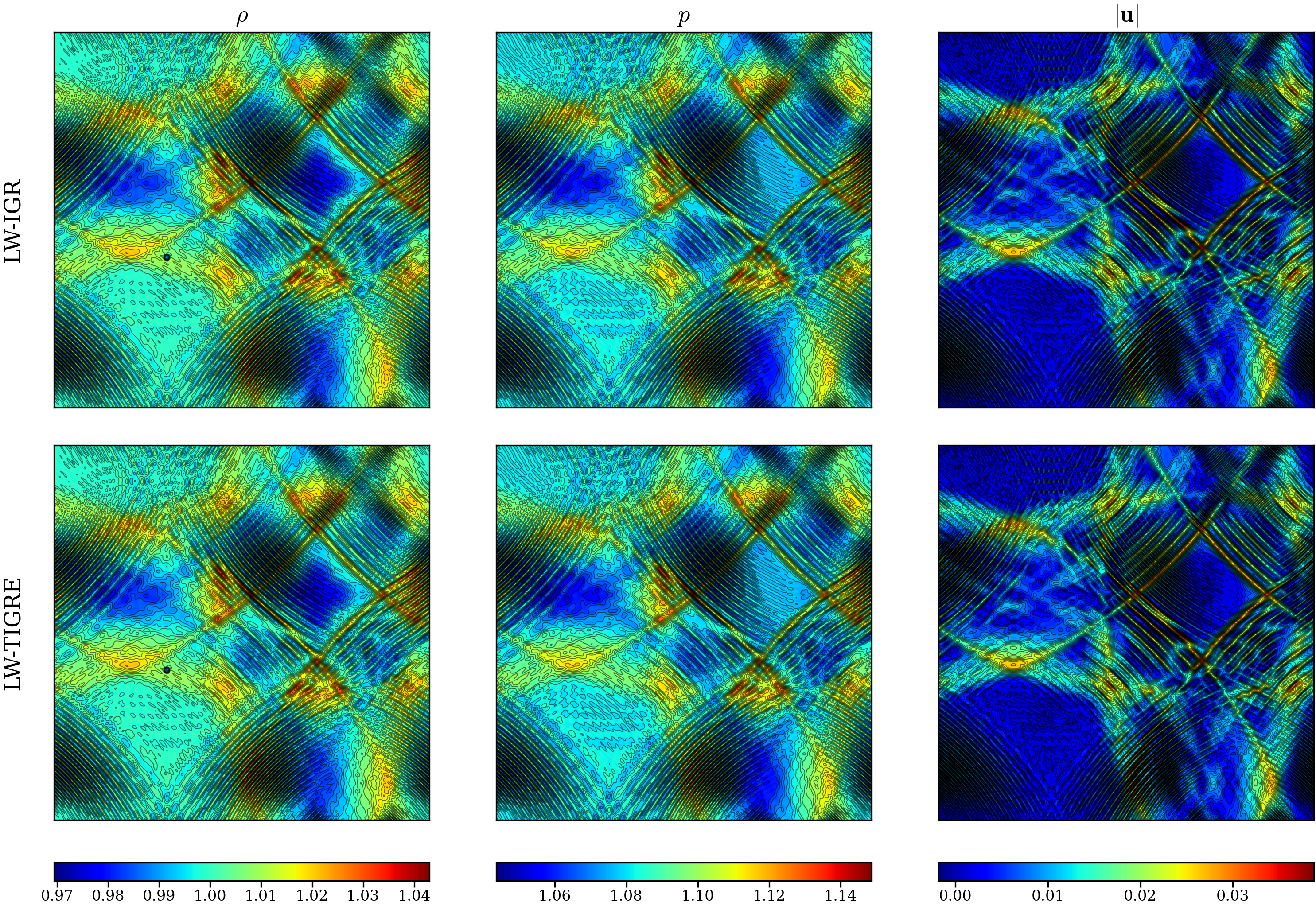}
    \vspace{0.1cm}
    \caption{Final solution profiles of the density, pressure, and magnitude of the velocity for the acoustic wave propagation test. The solutions profiles are observed to be nearly identical, illustrating how the acoustic wave propagation is consistent between both regularizations.}
    \label{fig:collision_front}
\end{figure}

\begin{figure}
    \centering
    \includegraphics[width=\linewidth]{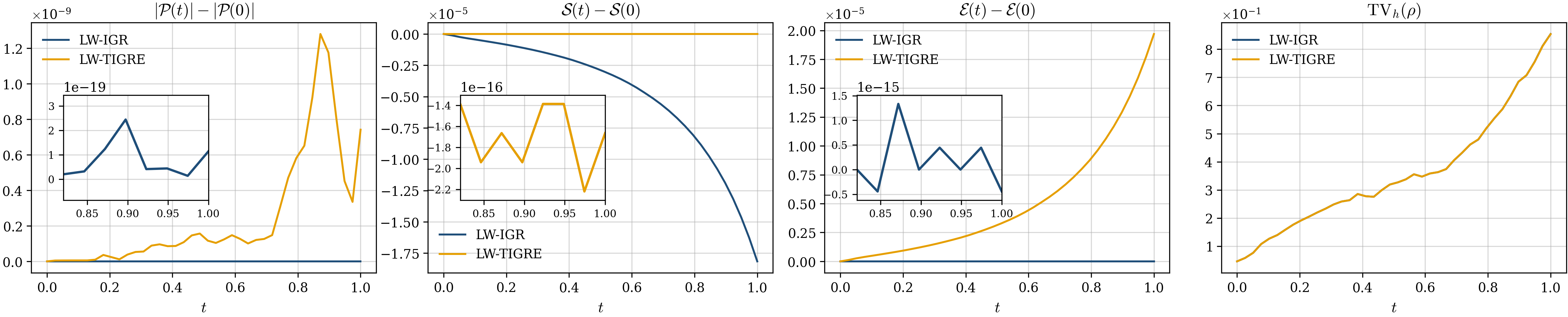}
    \caption{Diagnostics for the acoustic wave propagation test. We note that the sign of the total entropy for the LW-IGR and the total energy for the LW-TIGRE solution differ from the behaviour in the smoothed Sod shock tube test. This behaviour is due to the sign indefinite property of the information geometric correction for both sets of equations.}
    \label{fig:AW_diagnostics}
\end{figure}

\begin{figure}
    \centering
    \includegraphics[width=\linewidth]{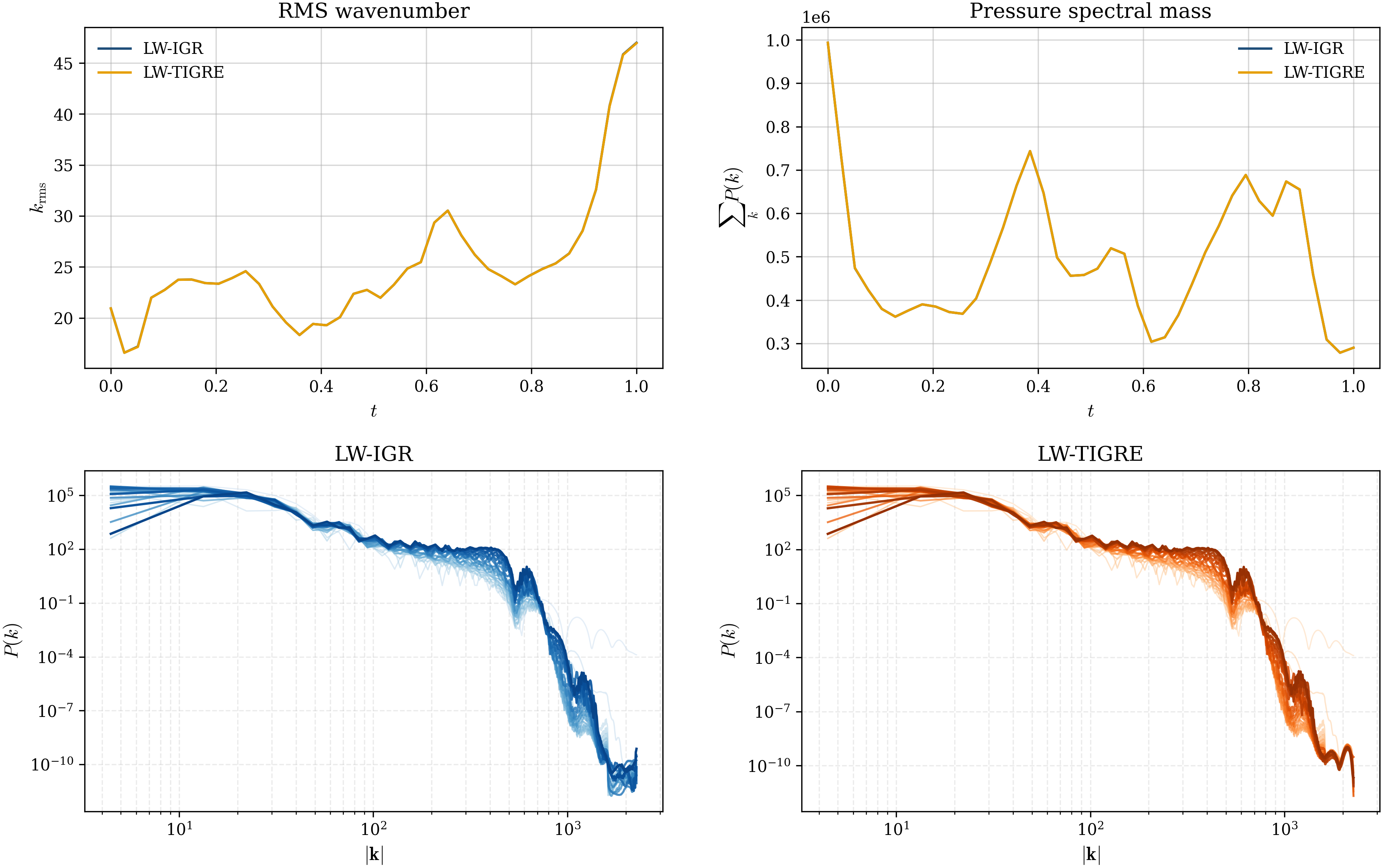}
    \caption{Spectral diagnostic comparison of the pressure field evolution for the LW-IGR and LW-TIGRE solutions. The mean wave number and total spectral mass (top row) of the pressure fields show similar behaviour for both regularizations, consistent with the evolution of the power spectrum (bottom row). The time evolution in the power spectrum is indicated by the increasing line opacity.}
    \label{fig:AW_spectral_diagnostics}
\end{figure}

\subsection{Kelvin--Helmholtz Instability}
\label{sec:KH_instability}

As a final numerical experiment, we consider a Kelvin--Helmholtz instability to assess the interaction of the regularization with vortex-dominated motion. The fluid is initialized using a shear layer with a small transverse perturbation to trigger a vortex-dominated instability. The initial velocity field profile is given by
\begin{equation*}
u_0(x,y) = \left(\frac{1}{4}\tanh\left(\frac{\delta y + w}{h}\right) - \frac{1}{4}\tanh\left(\frac{\delta y - w}{h}\right), \, A \sin(2\pi x) \exp\left(-\frac{\delta y^2}{h}\right)\right)\,,
\end{equation*}
with $\delta y = (y - y_0 + 0.5) \mod 1 - 0.5$ being a periodic distance from the shear layer centreline $y_0 = 0.5$, and the parameters were set to $w = 0.1$, $h = 0.01$, and $A = 0.01$. The density and entropy density are initialized to be uniform $\rho_0 = 1$ and $\pi_0 = 0.2$ such that no initial pressure perturbations are introduced. The dynamics are driven entirely by the shear instability, which sheds vortices at the scale of the initial perturbation.  \par

The simulation is performed with a grid $\Delta x = \Delta y = 1/1024$, and the final integration time is taken to be $t = 4$. The solution profiles at $t = 4$ are shown in Figure~\ref{fig:KH_instability}, where we observe a close similarity between the solutions for the pressure and the velocity field. The density in the LW-IGR, however, exhibits further oscillation along vortex filaments than for the LW-TIGRE solution, which can be seen as a difference in total variation indicated in Figure~\ref{fig:kh_diagnostics}. This discrepancy suggests a dynamical similarity with the Sod Shock tube test case where the cusp-singularity formed at pressure front collisions, now in combination with a vortex filament. In this case, the LW-TIGRE exhibits less dispersive error due to wave-vortex dynamics than in the LW-IGR solution, observed empirically in the right panel of figure~\ref{fig:kh_diagnostics}. The reduction in density wave oscillations near vortex filament in LW-TIGRE solution is a desirable property from the standpoint of stability of the numerical method and can be attributed to the additional regularization due to the $\chi$ potential. In Figure~\ref{fig:KH_instability_potentials}, we show the solution profiles for the potentials and observe balancing effects in the $\chi$ potential that possess an opposite sign to the $\Sigma$ potential. The conservative properties shown in Figure~\ref{fig:kh_diagnostics} indicate an agreement with the results of Section~\ref{sec:properties_regularized_equations}, where now the total entropy increases in the LW-IGR solution and the total energy decreases in the LW-TIGRE solution. We further assess the impacts on the regularization on the turbulent energy cascade for this solution in Figure~\ref{fig:kh_energy_cascade}, where we observe a similar decay structure in the kinetic energy spectrum. 
\begin{figure}[h!]
    \centering
    \includegraphics[width=\linewidth]{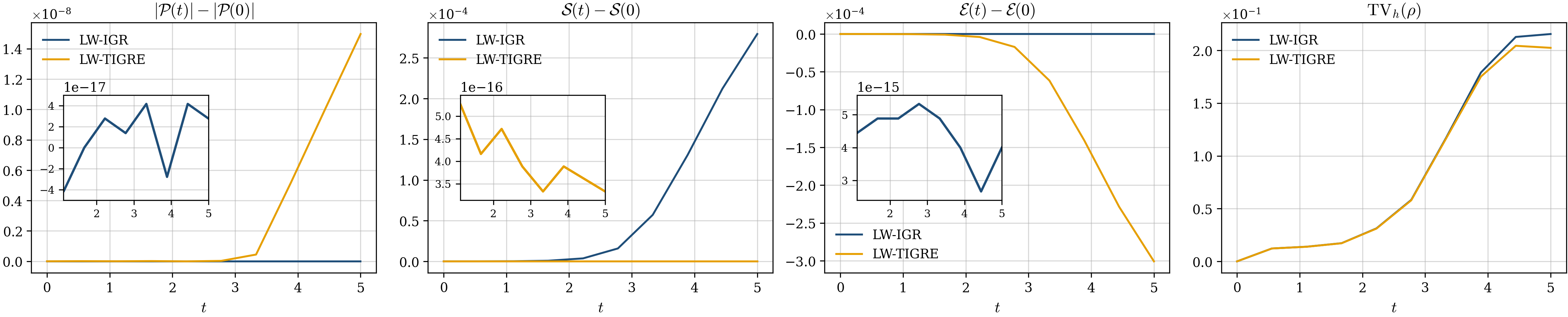}
    \caption{Conservation diagnostics for the Kelvin--Helmholtz instability test. The total variation of the mass densities differ, indicating an increase in dispersive error for the LW-IGR solution in comparison to the LW-TIGRE solution.}
    \label{fig:kh_diagnostics}
\end{figure}

\begin{figure}[h!]
    \centering
    \includegraphics[width=\linewidth]{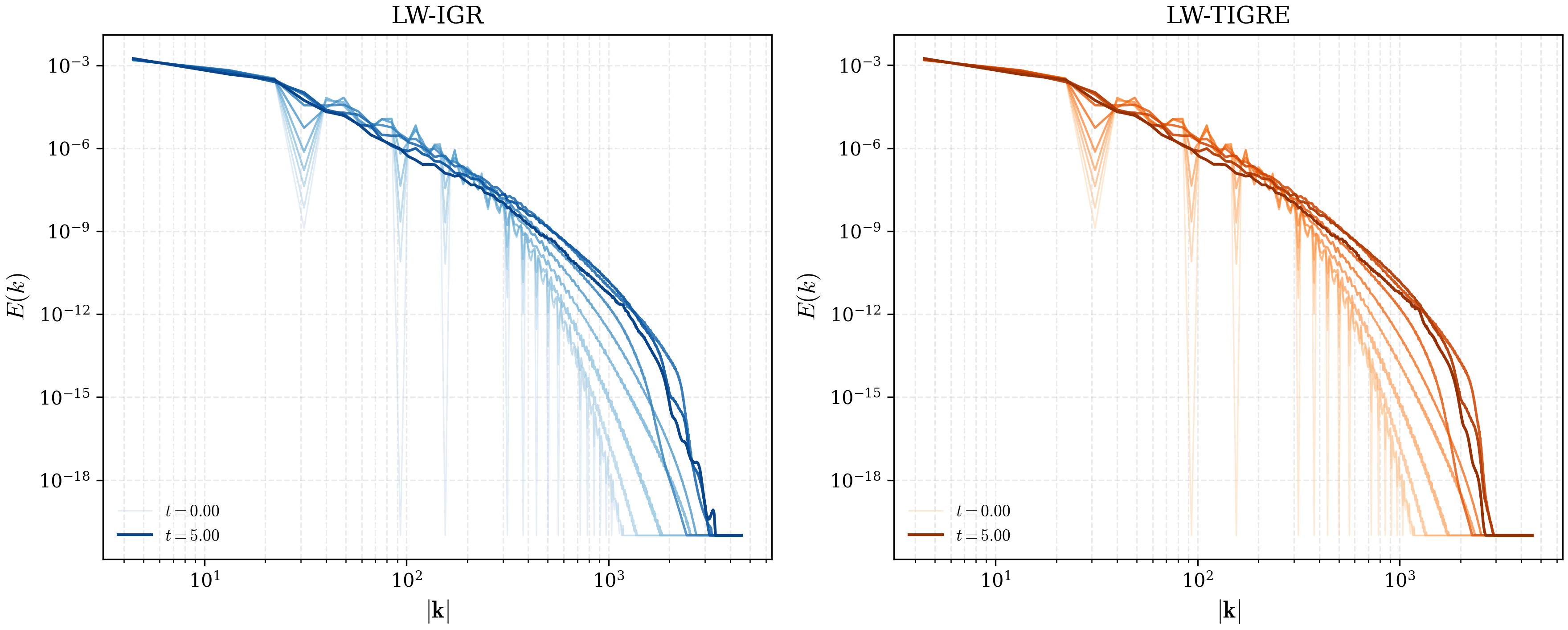}
    \caption{Energy cascades of both regularizations for the Kelvin--Helmholtz instability test.}
    \label{fig:kh_energy_cascade}
\end{figure}

\begin{figure}[h!]
    \centering
    \includegraphics[width = \linewidth]{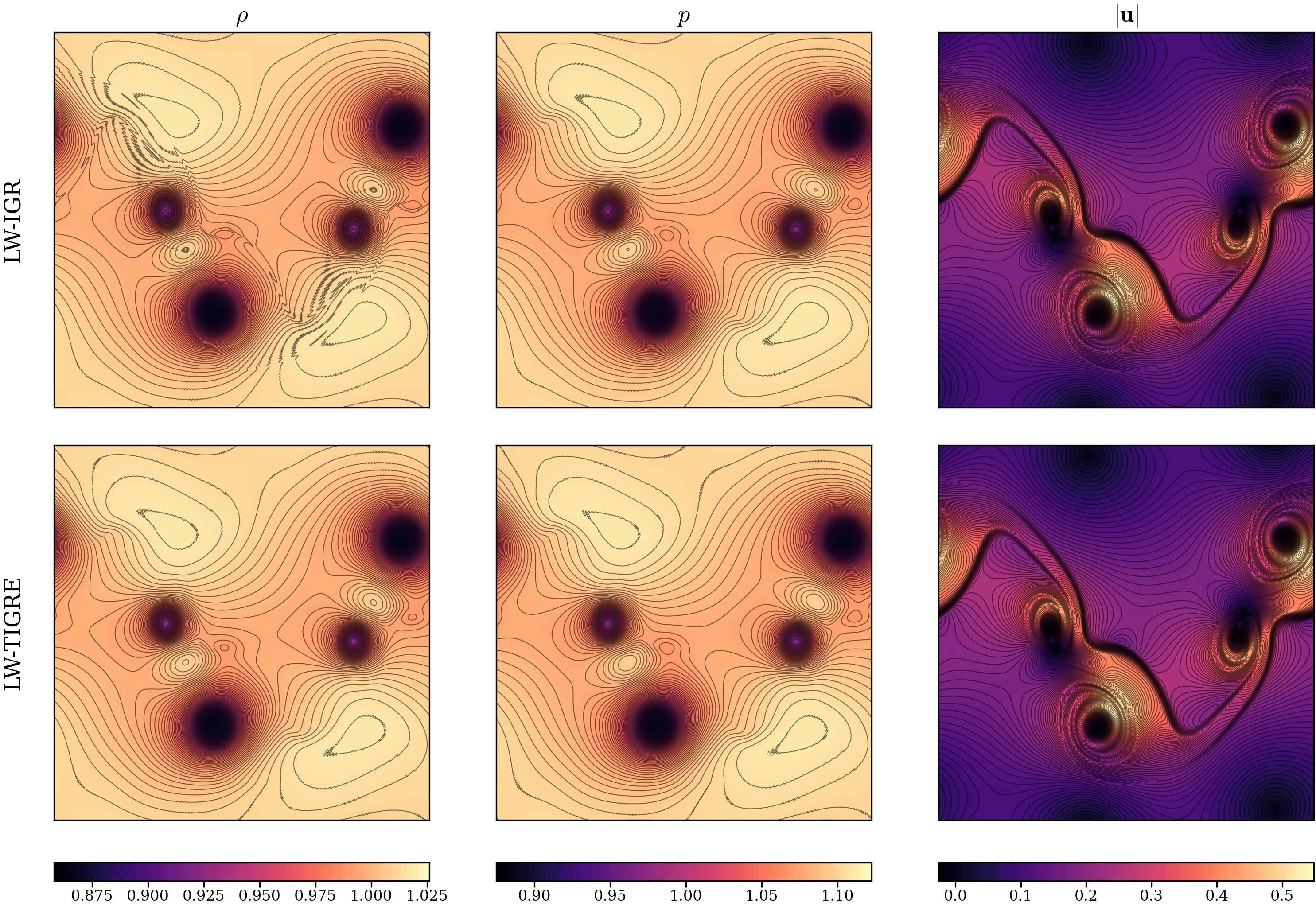}
    \vspace{0.1cm }
    \caption{Kelvin--Helmholtz instability test case at the final time step. We observe a damping of oscillatory behaviour in the interactions of the vortex motion and the density evolution in the TIGRE formulation in comparison to the IGR equations. The pressure and velocity fields however retain similar large scale features from one another.}
    \label{fig:KH_instability}
\end{figure}

\begin{figure}[h!]
    \centering
    \includegraphics[width = \linewidth]{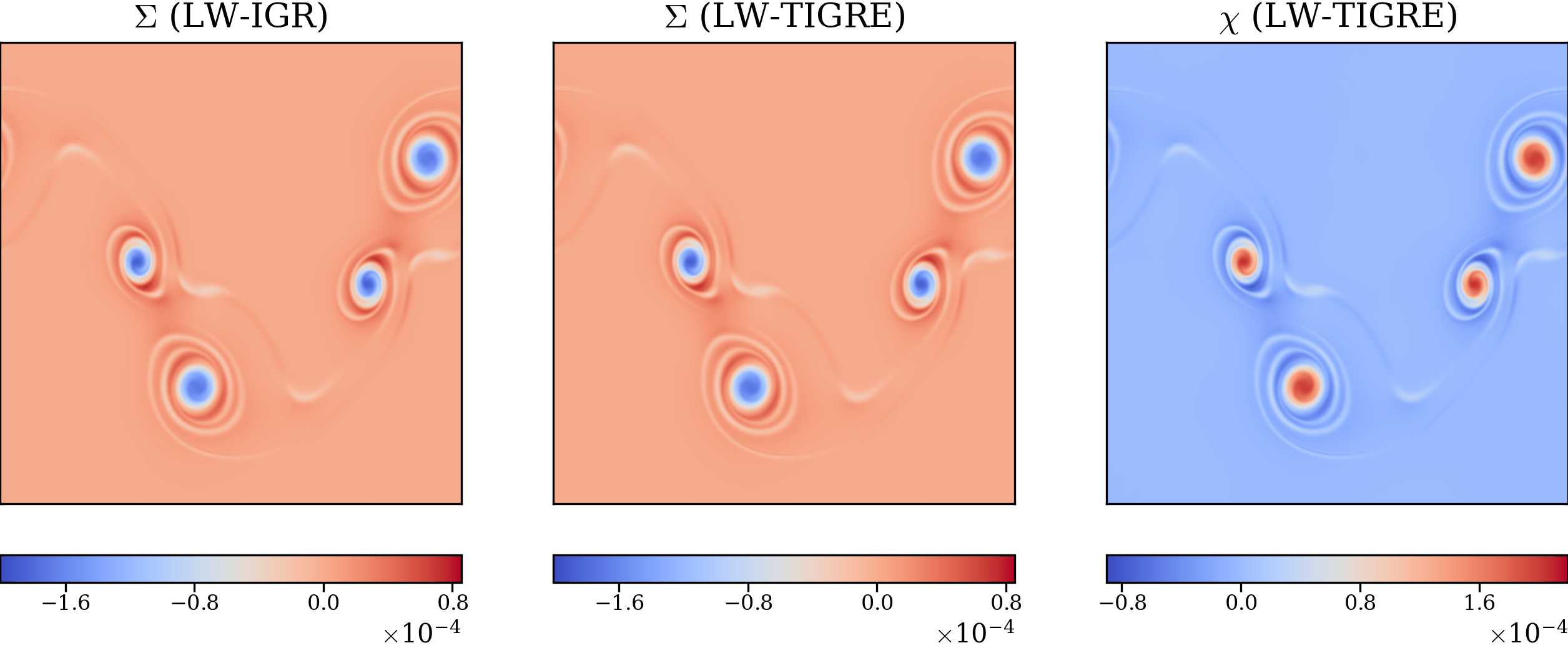}
    \vspace{0.1cm }
    \caption{The regularization potentials for the Kelvin--Helmholtz instability test case at the final time. The barotropic regularization $\Sigma$ for both models shows a similar dynamical behaviour. There is a sign difference between $\Sigma$ and $\chi$ for the thermodynamical regularization, indicating a balancing effect between the two potentials.}
    \label{fig:KH_instability_potentials}
\end{figure}

\section{Conclusion and Outlook}

In this work, we have proposed a thermodynamically constrained information geometric regularization (IGR) for compressible fluid flow. This represents an extension of the IGR for barotropic flows \cite{cao2023information} in two distinct ways. First, the new regularization incorporates a thermodynamic barrier defined by the relative entropy, modifying the original structure of the IGR equations with an additional regularization potential and entropic pressure contribution to the momentum equation. In doing so, we have resolved an open problem raised by Barham et al. \cite{barham2025hamiltonian} in regularizing the cusp-singularities observed in the IGR equations with a full thermodynamic state \eqref{full_compressible_euler_regularized}. These effects were validated numerically with experiments in one and two dimensions, where we have demonstrated the effects of the thermodynamic constraint in comparison with the previous IGR regularization. The constraint can be seen as selecting, among all regularized flows, the paths which are minimizing with respect to a thermodynamic length as defined by the Fisher--Rao metric. Second, we have provided a geometric formalism to derive the equations of motion using other barrier functions. This work offers a starting point for a number of lines of investigation and we conclude here with a few ideas that we hope could spur the development of new algorithms and theory at the confluence of infinite-dimensional geometry, mathematical physics, and scientific computing. 

\noindent
\emph{Numerical Solver and Benchmarking}
\vspace{0.1cm}

Developing high-performance numerical solvers for the TIGRE equations, similar to the high-order finite volume techniques for the IGR equations as described in \cite{wilfong2025simulating, radhakrishnan2026shocks}, would be important to benchmark the computational benefits and disadvantages of the approach in a performance-aware setting. The properties of the regularization demonstrated in this work motivate the design of other solver variants that incorporate alternative time-stepping routines and spatial discretizations along with their numerical analysis as interesting directions for future research. Integrating insights from research in hyperbolic conservation laws is also an important direction to motivate applications in other scientific domains that might benefit from this inviscid regularization strategy. Furthermore, investigating structure-preserving methods, such as those developed for finite elements \cite{gawlik2021variational, cotter2023compatible} and summation-by-parts finite difference methods \cite{svard2014review, bach2025sbp} within the context of information geometric mechanics might shed insight into approaches of capturing these regularizing properties at the discrete level. \\

\noindent
\emph{Stabilizing High-Order Methods}
\vspace{0.1cm}

There are a variety of compressible flow regimes that do not necessitate shock-capturing methods, such as those arising in atmospheric dynamics. Compressibility in the flow, however, still creates stiffness in the dynamics, and the use of high-order discretizations in these regimes is prone to instabilities; dispersive oscillations and uncontrolled aliasing error can violate fundamental constraints in the solution, triggering breakdown of the numerical solutions even when the dynamics are in a smooth regime. A variety of numerical techniques have been proposed to compensate for these instabilities such as the $2/3$ rule and filtering in pseudo-spectral methods \cite{boyd2001chebyshev, hou2007computing}, split-form and flux-form methods for spectral element and discontinuous Galerkin schemes \cite{kopriva2014energy, tadmor2016entropy}, and positivity-preserving limiters for finite volume schemes \cite{hu2013positivity}. Flux-differencing methods improve the robustness of high-order discretizations due to the enforcement of a discrete entropy inequality \cite{tadmor1987numerical, ranocha2023efficient, gassner2022stability}. An alternative approach to stabilization is to directly regularize the equations using a hyperviscosity \cite{boyd2001chebyshev}, entropy corrections \cite{abgrall2018general}, or artificial entropy diffusion \cite{chan2025artificial} for instance. Given the stabilizing effects of the information geometric regularization, an investigation into the use of other Hessian metric geometries as an alternative stabilizing technique for high-order numerical methods would be an interesting line of investigation. The design of barrier functions that are informed by the instabilities of the numerical method, damping spurious aliasing modes through the reformulation of the PDE model would be an interesting approach along this line.\\

\noindent
\emph{Thermodynamic Extensions}
\vspace{0.1cm}

In Section~\ref{sec:thermodynamic_constraint}, we motivated the use of the thermodynamic constraint through the relation to entropy production in the flow using an information geometric and a thermodynamic interpretation. While the relative entropy provides a natural and geometrically grounded starting point, extending the information geometric mechanics to more complex thermodynamic settings will require a better conceptualization based on the microscopic properties of gas dynamics and matter under consideration. In richer continuum setting such as reacting flows, multi-species plasmas, and systems undergoing phase transitions, the thermodynamic state space must be extended further and the reference equilibrium can become time-dependent. More fundamentally, near phase transitions the curvature as measured by the Fisher--Rao metric becomes an essential descriptor \cite{ruppeiner1995riemannian, ruppeiner2010thermodynamic}, suggesting curvature invariants of the statistical manifold may play a central role in correctly encoding the barrier structure in these regimes. Incorporating insights from geometric thermodynamics \cite{van2023thermodynamic, oikawa2025experimentally, zhong2024beyond, ito2024geometric} into these richer physical settings could offer a systematic approach to developing regularizations whose structure is more directly constrained by the geometry of the underlying thermodynamic phase space. \\

\noindent
\emph{Information Geometric Mechanics}
\vspace{0.1cm}

Our presentation has emphasized the interplay between Riemannian and information geometry underlying the equations of motions via the pullback dual geodesic formulation. Expanding on the information geometric mechanics \cite{cao2023information} will require further analytic and geometric formalism to investigate these mixed geometric structures. Leok and Zhang \cite{leok2017connecting} studied these connections in the finite-dimensional setting, where it was observed that information geometry could be viewed as an extension of geometric mechanics via Dirac mechanics on the Pontryagin bundle. A formalization of the Dirac mechanical structures, such as those studied by \cite{gay2026infinite}, on the diffeomorphism group and the extended thermodynamic state space could shed light into some more fundamental geometric structures underlying the information geometric mechanics.

\section*{\normalsize Acknowledgements}

The authors gratefully acknowledge that this research was supported in part by the Pacific Institute for the Mathematical Sciences (S.T. and R.J.S.), the Natural Sciences and Engineering Research Council of Canada under Discovery Grant RGPN-2020-04467 (R.J.S.), and Environment and Climate Change Canada.

\section*{\normalsize Statement of Author Contributions}

\textbf{Seth Taylor}: Conceptualization, Methodology, Formal Analysis, Software, Validation, Writing - Original Draft, Writing - Review \& Editing. \textbf{Raymond J. Spiteri}: Methodology, Validation, Writing - Original Draft, Writing - Review \& Editing, Supervision, Project administration, Funding acquisition. \textbf{St{\'e}phane Gaudreault}: Methodology, Validation, Writing - Review \& Editing, Supervision, Project administration, Funding acquisition. 

\small
\bibliographystyle{sn-mathphys}  
\bibliography{Bibliography}

@article{cao2023information,
  title={Information geometric regularization of the barotropic {Euler} equation},
  author={Cao, Ruijia and Sch{\"a}fer, Florian},
  journal={arXiv preprint arXiv:2308.14127},
  year={2023}
}

@article{khesin2021geometric,
  title={Geometric hydrodynamics and infinite-dimensional Newton’s equations},
  author={Khesin, Boris and Misio{\l}ek, Gerard and Modin, Klas},
  journal={Bulletin of the American Mathematical Society},
  volume={58},
  number={3},
  pages={377--442},
  year={2021}
}

@article{khesin2007shock,
  title={Shock waves for the {Burgers} equation and curvatures of diffeomorphism groups},
  author={Khesin, Boris and Misio{\l}ek, G},
  journal={Proceedings of the Steklov Institute of Mathematics},
  volume={259},
  number={1},
  pages={73--81},
  year={2007},
  publisher={Springer}
}

@article{chan2025artificial,
  title={An artificial viscosity approach to high order entropy stable discontinuous Galerkin methods},
  author={Chan, Jesse},
  journal={arXiv preprint arXiv:2501.16529},
  year={2025}
}

@article{hou2007computing,
  title={Computing nearly singular solutions using pseudo-spectral methods},
  author={Hou, Thomas Y and Li, Ruo},
  journal={Journal of Computational Physics},
  volume={226},
  number={1},
  pages={379--397},
  year={2007},
  publisher={Elsevier}
}

@book{boyd2001chebyshev,
  title     = {Chebyshev and Fourier spectral methods},
  author    = {Boyd, John P},
  year      = {2001},
  publisher = {Courier Corporation},
  address   = {Mineola, New York}
}

@article{kopriva2014energy,
  title={An energy stable discontinuous Galerkin spectral element discretization for variable coefficient advection problems},
  author={Kopriva, David A and Gassner, Gregor J},
  journal={SIAM Journal on Scientific Computing},
  volume={36},
  number={4},
  pages={A2076--A2099},
  year={2014},
  publisher={SIAM}
}

@incollection{tadmor2016entropy,
  author    = {Tadmor, Eitan},
  title     = {Entropy stable schemes},
  booktitle = {Handbook of Numerical Methods for Hyperbolic Problems},
  publisher = {Elsevier},
  address   = {Amsterdam},
  volume    = {17},
  pages     = {467--493},
  year      = {2016}
}

@article{holm1998euler,
  title={The {Euler}--{P}oincar{\'e} equations and semidirect products with applications to continuum theories},
  author={Holm, Darryl D and Marsden, Jerrold E and Ratiu, Tudor S},
  journal={Advances in Mathematics},
  volume={137},
  number={1},
  pages={1--81},
  year={1998},
  publisher={Elsevier}
}

@article{holm2009geometric,
  title={Geometric mechanics and symmetry: from finite to infinite dimensions},
  author={Holm, Darryl D and Schmah, Tanya and Stoica, Cristina},
  volume={12},
  year={2009},
  journal={Oxford University Press}
}

@article{marsden1997introduction,
  title={Introduction to mechanics and symmetry},
  author={Marsden, Jerrold E and Ratiu, Tudor S and Hermann, Robert},
  journal={SIAM Review},
  volume={39},
  number={1},
  pages={152--152},
  year={1997},
  publisher={Philadelphia, Society for Industrial and Applied Mathematics.}
}

@article{crooks2007measuring,
  title={Measuring thermodynamic length},
  author={Crooks, Gavin E},
  journal={Physical Review Letters},
  volume={99},
  number={10},
  pages={100602},
  year={2007},
  publisher={APS}
}

@article{bauer2024p,
  title={The L p-{Fisher}--{Rao} metric and {Amari}--{Chentsov} $\alpha$-Connections},
  author={Bauer, Martin and Le Brigant, Alice and Lu, Yuxiu and Maor, Cy},
  journal={Calculus of Variations and Partial Differential Equations},
  volume={63},
  number={2},
  pages={56},
  year={2024},
  publisher={Springer}
}

@article{hu2013positivity,
  title={Positivity-preserving method for high-order conservative schemes solving compressible {Euler} equations},
  author={Hu, Xiangyu Y and Adams, Nikolaus A and Shu, Chi-Wang},
  journal={Journal of Computational Physics},
  volume={242},
  pages={169--180},
  year={2013},
  publisher={Elsevier}
}

@article{abgrall2018general,
  title={A general framework to construct schemes satisfying additional conservation relations. Application to entropy conservative and entropy dissipative schemes},
  author={Abgrall, Remi},
  journal={Journal of Computational Physics},
  volume={372},
  pages={640--666},
  year={2018},
  publisher={Elsevier}
}

@article{tadmor1987numerical,
  title={The numerical viscosity of entropy stable schemes for systems of conservation laws. I},
  author={Tadmor, Eitan},
  journal={Mathematics of Computation},
  volume={49},
  number={179},
  pages={91--103},
  year={1987}
}

@article{ranocha2023efficient,
  title={Efficient implementation of modern entropy stable and kinetic energy preserving discontinuous Galerkin methods for conservation laws},
  author={Ranocha, Hendrik and Schlottke-Lakemper, Michael and Chan, Jesse and Rueda-Ram{\'\i}rez, Andr{\'e}s M and Winters, Andrew R and Hindenlang, Florian and Gassner, Gregor J},
  journal={ACM Transactions on Mathematical Software},
  volume={49},
  number={4},
  pages={1--30},
  year={2023},
  publisher={ACM New York, NY}
}

@article{gassner2022stability,
  title={Stability issues of entropy-stable and/or split-form high-order schemes: analysis of linear stability},
  author={Gassner, Gregor J and Sv{\"a}rd, Magnus and Hindenlang, Florian J},
  journal={Journal of Scientific Computing},
  volume={90},
  number={3},
  pages={79},
  year={2022},
  publisher={Springer}
}

@inproceedings{wilfong2025simulating,
  title={Simulating many-engine spacecraft: Exceeding 1 quadrillion degrees of freedom via information geometric regularization},
  author={Wilfong, Benjamin and Radhakrishnan, Anand and Le Berre, Henry and Vickers, Daniel and Prathi, Tanush and Tselepidis, Nikolaos and Dorschner, Benedikt and Budiardja, Reuben and Cornille, Brian and Abbott, Stephen and others},
  booktitle={Proceedings of the International Conference for High Performance Computing, Networking, Storage and Analysis},
  pages={14--24},
  year={2025}
}

@article{barham2025hamiltonian,
  title={Hamiltonian Information Geometric Regularization of the Compressible {Euler} Equations},
  author={Barham, William and Tran, Brian K and Southworth, Ben S and Sch{\"a}fer, Florian},
  journal={arXiv preprint arXiv:2512.13948},
  year={2025}
}

@article{amari2016information,
  title={Information geometry and its applications},
  author={Amari, Shun-ichi},
  volume={194},
  journal={Springer},
  year={2016}
}

@article{leveque1992numerical,
  title={Numerical methods for conservation laws},
  author={LeVeque, Randall J and Leveque, Randall J},
  volume={132},
  year={1992},
  journal={Springer}
}

@article{leok2017connecting,
  title={Connecting information geometry and geometric mechanics},
  author={Leok, Melvin and Zhang, Jun},
  journal={Entropy},
  volume={19},
  number={10},
  pages={518},
  year={2017},
  publisher={MDPI}
}

@article{bhagatwala2009modified,
  title={A modified artificial viscosity approach for compressible turbulence simulations},
  author={Bhagatwala, Ankit and Lele, Sanjiva K},
  journal={Journal of computational physics},
  volume={228},
  number={14},
  pages={4965--4969},
  year={2009},
  publisher={Elsevier}
}

@article{dolzhansky2012fundamentals,
  title={Fundamentals of geophysical hydrodynamics},
  author={Dolzhansky, Felix V},
  volume={103},
  year={2012},
  journal={Springer Science \& Business Media}
}

@article{lehner2014numerical,
  title={Numerical relativity and astrophysics},
  author={Lehner, Luis and Pretorius, Frans},
  journal={Annual Review of Astronomy and Astrophysics},
  volume={52},
  number={1},
  pages={661--694},
  year={2014},
  publisher={Annual Reviews}
}

@article{toro2013riemann,
  title={Riemann solvers and numerical methods for fluid dynamics: a practical introduction},
  author={Toro, Eleuterio F},
  year={2013},
  journal={Springer Science \& Business Media}
}

@article{godunov1959finite,
  title={Finite difference method for numerical computation of discontinuous solutions of the equations of fluid dynamics},
  author={Godunov, Sergei K and Bohachevsky, Ihor},
  journal={Matemati{\v{c}}eskij sbornik},
  volume={47},
  number={3},
  pages={271--306},
  year={1959}
}

@article{harten1983upstream,
  title={On upstream differencing and {Godunov}-type schemes for hyperbolic conservation laws},
  author={Harten, Amiram and Lax, Peter D and Leer, Bram van},
  journal={SIAM review},
  volume={25},
  number={1},
  pages={35--61},
  year={1983},
  publisher={SIAM}
}

@article{roe1981approximate,
  title={Approximate {Riemann} solvers, parameter vectors, and difference schemes},
  author={Roe, Philip L},
  journal={Journal of computational physics},
  volume={43},
  number={2},
  pages={357--372},
  year={1981},
  publisher={Elsevier}
}

@article{toro2009hll,
  title={The {HLL} and {HLLC} {Riemann} solvers},
  author={Toro, Eleuterio F},
  title={Riemann solvers and numerical methods for fluid dynamics: A practical introduction},
  pages={315--344},
  year={2009},
  journal={Springer}
}

@article{van1979towards,
  title={Towards the ultimate conservative difference scheme. {V}. {A} second-order sequel to {Godunov}'s method},
  author={Van Leer, Bram},
  journal={Journal of computational Physics},
  volume={32},
  number={1},
  pages={101--136},
  year={1979},
  publisher={Elsevier}
}

@article{harten1997uniformly,
  title={Uniformly high order accurate essentially non-oscillatory schemes, {III}},
  author={Harten, Ami and Engquist, Bjorn and Osher, Stanley and Chakravarthy, Sukumar R},
  journal={Journal of computational physics},
  volume={131},
  number={1},
  pages={3--47},
  year={1997},
  publisher={Elsevier}
}

@article{liu1994weighted,
  title={Weighted essentially non-oscillatory schemes},
  author={Liu, Xu-Dong and Osher, Stanley and Chan, Tony},
  journal={Journal of computational physics},
  volume={115},
  number={1},
  pages={200--212},
  year={1994},
  publisher={Elsevier}
}

@article{jiang1996efficient,
  title={Efficient implementation of weighted ENO schemes},
  author={Jiang, Guang-Shan and Shu, Chi-Wang},
  journal={Journal of computational physics},
  volume={126},
  number={1},
  pages={202--228},
  year={1996},
  publisher={Elsevier}
}

@article{vonneumann1950method,
  title={A method for the numerical calculation of hydrodynamic shocks},
  author={VonNeumann, John and Richtmyer, Robert D},
  journal={Journal of applied physics},
  volume={21},
  number={3},
  pages={232--237},
  year={1950},
  publisher={American Institute of Physics}
}

@inproceedings{persson2006sub,
  title={Sub-cell shock capturing for discontinuous {Galerkin} methods},
  author={Persson, Per-Olof and Peraire, Jaime},
  booktitle={44th AIAA aerospace sciences meeting and exhibit},
  pages={112},
  year={2006}
}

@article{nessyahu1990non,
  title={Non-oscillatory central differencing for hyperbolic conservation laws},
  author={Nessyahu, Haim and Tadmor, Eitan},
  journal={Journal of computational physics},
  volume={87},
  number={2},
  pages={408--463},
  year={1990},
  publisher={Elsevier}
}

@article{kurganov2000new,
  title={New high-resolution central schemes for nonlinear conservation laws and convection--diffusion equations},
  author={Kurganov, Alexander and Tadmor, Eitan},
  journal={Journal of computational physics},
  volume={160},
  number={1},
  pages={241--282},
  year={2000},
  publisher={Elsevier}
}

@article{pirozzoli2011numerical,
  title={Numerical methods for high-speed flows},
  author={Pirozzoli, Sergio},
  journal={Annual review of fluid mechanics},
  volume={43},
  number={1},
  pages={163--194},
  year={2011},
  publisher={Annual Reviews}
}

@article{wilkins1980use,
  title={Use of artificial viscosity in multidimensional fluid dynamic calculations},
  author={Wilkins, Mark L},
  journal={Journal of computational physics},
  volume={36},
  number={3},
  pages={281--303},
  year={1980},
  publisher={Elsevier}
}

@article{frisch2008hyperviscosity,
  title={Hyperviscosity, {Galerkin} truncation, and bottlenecks in turbulence},
  author={Frisch, Uriel and Kurien, Susan and Pandit, Rahul and Pauls, Walter and Ray, Samriddhi Sankar and Wirth, Achim and Zhu, Jian-Zhou},
  journal={Physical review letters},
  volume={101},
  number={14},
  pages={144501},
  year={2008},
  publisher={APS}
}

@article{cook2005hyperviscosity,
  title={Hyperviscosity for shock-turbulence interactions},
  author={Cook, Andrew W and Cabot, William H},
  journal={Journal of Computational Physics},
  volume={203},
  number={2},
  pages={379--385},
  year={2005},
  publisher={Elsevier}
}

@article{cao2024information,
  title={Information geometric regularization of unidimensional pressureless {Euler} equations yields global strong solutions},
  author={Cao, Ruijia and Sch{\"a}fer, Florian},
  journal={arXiv preprint arXiv:2411.15121},
  year={2024}
}

@article{bauer2016uniqueness,
  title={Uniqueness of the {Fisher--Rao} metric on the space of smooth densities},
  author={Bauer, Martin and Bruveris, Martins and Michor, Peter W},
  journal={Bulletin of the London Mathematical Society},
  volume={48},
  number={3},
  pages={499--506},
  year={2016},
  publisher={Oxford University Press}
}

@article{bauer2015quiet,
  title={The quiet revolution of numerical weather prediction},
  author={Bauer, Peter and Thorpe, Alan and Brunet, Gilbert},
  journal={Nature},
  volume={525},
  number={7567},
  pages={47--55},
  year={2015},
  publisher={Nature Publishing Group UK London}
}

@article{anderson1995computational,
  title={Computational fluid dynamics},
  author={Anderson, John David and Wendt, John and others},
  volume={206},
  year={1995},
  journal={Springer}
}

@article{ruppeiner2010thermodynamic,
  title={Thermodynamic curvature measures interactions},
  author={Ruppeiner, George},
  journal={American Journal of Physics},
  volume={78},
  number={11},
  pages={1170--1180},
  year={2010},
  publisher={AIP Publishing}
}

@article{ay2017information,
  title={Information geometry},
  author={Ay, Nihat and Jost, J{\"u}rgen and V{\^a}n L{\^e}, H{\^o}ng and Schwachh{\"o}fer, Lorenz},
  volume={64},
  year={2017},
  journal={Springer}
}

@article{khesin2024information,
  title={Information geometry of diffeomorphism groups},
  author={Khesin, Boris and Misio{\l}ek, Gerard and Modin, Klas},
  journal={arXiv preprint arXiv:2411.03265},
  year={2024}
}

@article{lauritzen1987statistical,
  title={Statistical manifolds},
  author={Lauritzen, Stefan L},
  journal={Differential geometry in statistical inference},
  volume={10},
  number={2},
  pages={163--216},
  year={1987},
  publisher={Institute of Mathematical Statistics Hayward}
}

@article{rao1945information,
  title={Information and the accuracy attainable in the estimation of statistical parameters},
  author={Rao, C Radhakrishna and others},
  journal={Bull. Calcutta Math. Soc},
  volume={37},
  number={3},
  pages={81--91},
  year={1945}
}

@article{chentsov1982statistical,
  title={Statistical decision rules and optimal inference},
  author={Chentsov, Nikolai Nikolaevich},
  year={1982},
  journal={American Mathematical Society}
}

@article{amari1982differential,
  title={Differential geometry of curved exponential families-curvatures and information loss},
  author={Amari, Shun-Ichi},
  journal={The Annals of Statistics},
  pages={357--385},
  year={1982},
  publisher={JSTOR}
}

@article{pistone1995infinite,
  title={An infinite-dimensional geometric structure on the space of all the probability measures equivalent to a given one},
  author={Pistone, Giovanni and Sempi, Carlo},
  journal={The annals of statistics},
  pages={1543--1561},
  year={1995},
  publisher={JSTOR}
}

@article{shima1997geometry,
  title={Geometry of Hessian manifolds},
  author={Shima, Hirohiko and Yagi, Katsumi},
  journal={Differential geometry and its applications},
  volume={7},
  number={3},
  pages={277--290},
  year={1997},
  publisher={Elsevier}
}

@article{feng2009far,
  title={Far-from-equilibrium measurements of thermodynamic length},
  author={Feng, Edward H and Crooks, Gavin E},
  journal={Physical Review E—Statistical, Nonlinear, and Soft Matter Physics},
  volume={79},
  number={1},
  pages={012104},
  year={2009},
  publisher={APS}
}

@article{amari1980theory,
  title={Theory of information spaces: A differential geometrical foundation of statistics},
  author={Amari, Shun-ichi},
  journal={Post RAAG Reports},
  year={1980}
}

@article{otto2001geometry,
  title={The geometry of dissipative evolution equations: the porous medium equation},
  author={Otto, Felix},
  year={2001},
  publisher={Taylor \& Francis}
}

@article{benamou2000computational,
  title={A computational fluid mechanics solution to the Monge-Kantorovich mass transfer problem},
  author={Benamou, Jean-David and Brenier, Yann},
  journal={Numerische Mathematik},
  volume={84},
  number={3},
  pages={375--393},
  year={2000},
  publisher={Springer-Verlag Berlin/Heidelberg}
}

@article{lott2009ricci,
  title={Ricci curvature for metric-measure spaces via optimal transport},
  author={Lott, John and Villani, C{\'e}dric},
  journal={Annals of Mathematics},
  pages={903--991},
  year={2009},
  publisher={JSTOR}
}

@article{li2021hessian,
  title={Hessian metric via transport information geometry},
  author={Li, Wuchen},
  journal={Journal of Mathematical Physics},
  volume={62},
  number={3},
  year={2021},
  publisher={AIP Publishing}
}

@article{radhakrishnan2026shocks,
  title={Shocks without shock capturing: Information geometric regularization of finite volume methods for Navier--Stokes-like problems},
  author={Radhakrishnan, Anand and Wilfong, Benjamin and Bryngelson, Spencer H and Sch{\"a}fer, Florian},
  journal={arXiv preprint arXiv:2604.06546},
  year={2026}
}

@article{gawlik2021variational,
  title={A variational finite element discretization of compressible flow},
  author={Gawlik, Evan S and Gay-Balmaz, Fran{\c{c}}ois},
  journal={Foundations of Computational Mathematics},
  volume={21},
  number={4},
  pages={961--1001},
  year={2021},
  publisher={Springer}
}

@article{cotter2023compatible,
  title={Compatible finite element methods for geophysical fluid dynamics},
  author={Cotter, Colin J},
  journal={Acta Numerica},
  volume={32},
  pages={291--393},
  year={2023},
  publisher={Cambridge University Press}
}

@article{svard2014review,
  title={Review of summation-by-parts schemes for initial--boundary-value problems},
  author={Sv{\"a}rd, Magnus and Nordstr{\"o}m, Jan},
  journal={Journal of Computational Physics},
  volume={268},
  pages={17--38},
  year={2014},
  publisher={Elsevier}
}

@article{bach2025sbp,
  title={{SBP-FDEC}: {Summation-by-Parts Finite Difference Exterior Calculus}},
  author={Bach, Daniel and Rueda-Ram{\'\i}rez, Andr{\'e}s M and Sonnendr{\"u}cker, Eric and Fern{\'a}ndez, David C and Gassner, Gregor J},
  journal={arXiv preprint arXiv:2511.20529},
  year={2025}
}

@article{ito2024geometric,
  title={Geometric thermodynamics for the {Fokker--Planck} equation: stochastic thermodynamic links between information geometry and optimal transport},
  author={Ito, Sosuke},
  journal={Information geometry},
  volume={7},
  pages={441--483},
  year={2024},
  publisher={Springer}
}

@article{van2023thermodynamic,
  title={Thermodynamic unification of optimal transport: Thermodynamic uncertainty relation, minimum dissipation, and thermodynamic speed limits},
  author={Van Vu, Tan and Saito, Keiji},
  journal={Physical Review X},
  volume={13},
  number={1},
  pages={011013},
  year={2023},
  publisher={APS}
}

@article{seifert2012stochastic,
  title={Stochastic thermodynamics, fluctuation theorems and molecular machines},
  author={Seifert, Udo},
  journal={Reports on progress in physics},
  volume={75},
  number={12},
  pages={126001},
  year={2012},
  publisher={IOP Publishing}
}

@article{oikawa2025experimentally,
  title={Experimentally achieving minimal dissipation via thermodynamically optimal transport},
  author={Oikawa, Shingo and Nakayama, Yohei and Ito, Sosuke and Sagawa, Takahiro and Toyabe, Shoichi},
  journal={Nature Communications},
  volume={16},
  number={1},
  pages={10424},
  year={2025},
  publisher={Nature Publishing Group UK London}
}

@article{gay2026infinite,
  title={Infinite-dimensional {Lagrange--Dirac} systems with boundary energy flow II: {Field} theories with bundle-valued forms},
  author={Gay--Balmaz, Fran{\c{c}}ois and Abella, {\'A}lvaro Rodr{\'\i}guez and Yoshimura, Hiroaki},
  journal={Journal of Geometry and Physics},
  pages={105854},
  year={2026},
  publisher={Elsevier}
}

@article{zhong2024beyond,
  title={Beyond linear response: Equivalence between thermodynamic geometry and optimal transport},
  author={Zhong, Adrianne and DeWeese, Michael R},
  journal={Physical Review Letters},
  volume={133},
  number={5},
  pages={057102},
  year={2024},
  publisher={APS}
}

@article{ruppeiner1995riemannian,
  title={Riemannian geometry in thermodynamic fluctuation theory},
  author={Ruppeiner, George},
  journal={Reviews of Modern Physics},
  volume={67},
  number={3},
  pages={605},
  year={1995},
  publisher={APS}
}

\end{document}